\definecolor{refkey}{rgb}{1,0,0.5}
\definecolor{labelkey}{rgb}{0,0.4,1}
\renewcommand{\@todonotes@drawMarginNoteWithLine}{%
	\begin{tikzpicture}[remember picture, overlay, baseline=-0.75ex]%
	\node [coordinate] (inText) {};%
	\end{tikzpicture}%
	\marginnote[{
		\@todonotes@drawMarginNote%
		\@todonotes@drawLineToLeftMargin%
	}]{
		\@todonotes@drawMarginNote%
		\@todonotes@drawLineToRightMargin%
	}%
}
\numberwithin{equation}{section}
\newtheorem{thm}{Theorem}[section]
\newtheorem{lem}[thm]{Lemma}
\newtheorem{prop}[thm]{Proposition}
\newtheorem{rmk}[thm]{Remark}
\newtheorem{defn}[thm]{Definition}
\newcommand{\be}{\begin{equation}}
\newcommand{\ee}{\end{equation}}
\newcommand{\bee}{\begin{equation*}}
\newcommand{\eee}{\end{equation*}}
\newcommand{\bse}{\begin{subequations}}
\newcommand{\ese}{\end{subequations}}
\newcommand{\bs}{\begin{split}}
\newcommand{\es}{\end{split}}
\begin{document}
\author{Hairong Liu$^{1}$}\thanks{$^{1}$School of Mathematics and Statistics, Nanjing University of Science and Technology, Nanjing, 210094,  China,
E-mail: hrliu@njust.edu.cn}

\author{Hua Zhong$^{2}$}\thanks{$^{2}$School of Mathematics, Southwest Jiaotong University, Chengdu, 611756, China,
E-mail: huazhong@swjtu.edu.cn}

\lstset{basicstyle=\ttfamily\large}

\title[compressible  Navier-Stokes equations with density-dependent
 viscosities] {Well-posedness of the 3-D compressible  Navier-Stokes equations with density-dependent
 viscosities in exterior domains with far-field vacuum}

\begin{abstract}
This paper investigates the local existence and uniqueness of  strong solutions to the three-dimensional compressible Navier-Stokes equations with density-dependent viscosities in  exterior domains.
When both the shear and bulk viscosity coefficients  depend on the density in a power law ($\rho^{\delta}$ with $0<\delta<1$) and Navier-slip boundary condition on the velocity is imposed,
 base on  a reformulation of the problem using new variables to handle the degeneracy near vacuum,  we establish the local well-posedness of regular  solutions  with far-field vacuum  in inhomogeneous Sobolev spaces.
 Compared to the Cauchy problem,  the initial-boundary value problem requires establishing non-standard weighted estimates and handling the unavailability of boundary conditions for higher-order terms. Our approach addresses these challenges via the conormal space technique.

\noindent {\bf Keywords}: 3D Compressible Navier-Stokes  equations, Density-dependent
 viscosities,
Far field vacuum, Navier-slip boundary condition, Exterior domain.

 \noindent {\bf AMS Subject Classifications: 35A01, 35J75, 35B65}
\end{abstract}

\maketitle
\section{Introduction and Main Theorems}
We consider the three-dimensional compressible Navier-Stokes equations:
\begin{equation}\label{prob1}
	\left\{
	\begin{array}{ll}
		\rho_{t}+\mbox{div}(\rho u)=0,\\
		\rho\big(u_{t}+u\cdot\nabla u\big)+\nabla P=\mbox{div}\mathbb{T},
		\end{array}
	\right.
\end{equation}
where $\rho$, $u=(u_1,u_2,u_3)$ denote the density and  the velocity,  respectively.
For polytropic gases, the constitutive relation is given by
\begin{equation}\label{P}
	P=A\rho^{\gamma},\quad A>0,\quad \gamma>1,
\end{equation}
where $A$ is an entropy constant and $\gamma$ is the adiabatic exponent.
$\mathbb{T}$ denotes the viscous stress tensor with the form
\begin{equation}\label{T}
	\mathbb{T}=\mu(\rho)\left(\nabla u+(\nabla u)^T\right)+\lambda(\rho)\mbox{div}u\mathbb{I}_{3}
\end{equation}
where $\mathbb{I}_3$ is the $3\times3$ identity matrix.  The viscosity coefficients $\mu$, $\lambda$ are the functions of $\rho$ and could be expressed as
\begin{equation}\label{T}
	\mu(\rho)=\alpha \rho^{\delta},\quad \lambda(\rho)=\beta\rho^{\delta},
\end{equation}
for some constant $\delta\geq0$, $\alpha$ and $\beta$ are both constants satisfying
\begin{equation}\label{alpha}
	\alpha>0,\quad 2\alpha+3\beta\geq0.
\end{equation}
In this work, we focus on the case $0<\delta<1$ in \eqref{T}.

In the theory of gas dynamics, the compressible Navier-Stokes equations  can be  derived from the Boltzmann equations via the Chapman-Enskog expansion. Under appropriate physical assumptions, the viscosity coefficients and the heat conductivity coefficient are not constants but functions of the absolute temperature $\theta$ such as:
\begin{align*}
\mu(\theta) = a_1 \theta^{\frac{1}{2}} F(\theta), \quad
\lambda(\theta) = a_2 \theta^{\frac{1}{2}} F(\theta), \quad
\kappa(\theta) = a_3 \theta^{\frac{1}{2}} F(\theta)
\end{align*}
for some positive constants $a_i$ ($i=1,2,3$) (see \cite{ChapmanCowling},\cite{LiQin}). For the cut-off inverse power force models, where the intermolecular potential varies as $r^{-a}$ with $r$ being the intermolecular distance, the function $F(\theta)$ takes a power-law form
$F(\theta) = \theta^{b}$ with $b = \frac{2}{a} \in [0, +\infty)$.
According to Liu-Xin-Yang\cite{LiuXinYang}, for isentropic and polytropic fluids, this temperature dependence translates into a density dependence via the equation of state. Using the laws of Boyle and Gay-Lussac:
$P = R\rho\theta = A\rho^{\gamma}$ for constants  $R, A>0$,
one finds $\theta = A R^{-1} \rho^{\gamma-1}$. Substituting this into the expressions for the viscosities shows that they become functions of the density of the form
\begin{equation*}
\mu(\rho) \sim \rho^{\delta},
\end{equation*}
where the exponent $\delta$ satisfies $0 < \delta < 1$ in many physical scenarios.\\

\noindent{ \bf Problem Setting}:

Let $U$ be a simply connected bounded smooth domain in $\mathbb{R}^{3}$, and $\Omega\equiv\mathbb{R}^{3}\backslash \bar{U}$ be the exterior domain.  We study the initial-boundary value problem of (\ref{prob1}) in $\Omega$ with:
\begin{itemize}
 \item Initial conditions:
\begin{equation}\label{initial}
	(\rho, u)|_{t=0}=(\rho_0(x), u_0(x))\quad \mbox{with}\  \rho_0(x)>0,\quad  \inf \rho_0(x)=0. 
\end{equation}
\item Navier-slip boundary conditions on $u$:
\begin{equation}\label{boundary2-1}
	u\cdot n=0,\quad \left(S(u)\cdot n\right)_{\tau}=-(\vartheta u)_{\tau},\quad \mbox{on}\quad \partial\Omega.
\end{equation}
\end{itemize}
 Here $\vartheta$ is a coefficient which measures the tendency of the fluid to slip on the boundary,
  $n$ stands for the outward unit normal to $\partial\Omega$, $S$ is the strain tensor defined as
\begin{equation*}
	S(u)=\frac{1}{2}\left(\nabla u+(\nabla u)^{T}\right).
\end{equation*}
For a vector field $v$ on $\partial\Omega$, $v_{\tau}$ means its tangential part, that is
$v_{\tau}=v-(v\cdot n)n$.
These conditions,  first introduced by Navier  \cite{Navierslip},  model the situation that there is a stagnant layer of fluid close to the wall allowing a fluid to slip and the slip velocity is proportional to the shear stress.
The boundary condition (\ref{boundary2-1}) is equivalent to the following conditions in the sense of  the distribution
\begin{equation}\label{boundary2-2}
	u\cdot n=0,\quad \mbox{curl}u\times n= -2\left(\vartheta\mathbb{I}_3-S(n) \right)u=:-K(x)u,\quad \mbox{on}\quad \partial\Omega,
\end{equation}
 see \cite{XX2013}.
 In the special case that the boundary $\partial\Omega$ is flat, one has $S(n)=0$.
For the sake of simplicity, we assume that $K(x)$ is a positive semi-definite matrix.
However, this requirement can be replaced by the condition that $K$ is sufficiently small (e.g., see  (\ref{K-2}) in  Lemma \ref{lem-u} below).
\begin{itemize}
\item Far field behavior:
\begin{equation}\label{1.8}
	(\rho,u)(x,t)\rightarrow(0,0),\quad\text{as}\quad |x|\rightarrow\infty,\quad \mbox{for}\ t\geq0.
\end{equation}
\end{itemize}
Moreover, we assume that the initial data satisfy  the compatibility condition  with the boundary conditions.

Throughout this paper, we take the following simplified notations for the standard inhomogeneous and homogeneous Sobolev space:
\begin{align*}
	& \|f\|_{L^{p}}=\|f\|_{L^{p}(\Omega)};\quad 	\|f\|_s=\|f\|_{H^s(\Omega)};\quad D^1=\{f\in L^6(\Omega):\,\,\|f\|_{D^1}=\|\nabla f\|_{L^2}<+\infty\};\\
	& D^{k,r}=\{f\in L_{loc}^1(\Omega):\,\,\|f\|_{D^{k,r}}=\|\nabla^kf\|_{L^r}<+\infty\};\ D^k=D^{k,2} \, \,\text{for}\,\,  k\geq 2;\
\int_{\Omega}fdx=\int_{\Omega}f.
\end{align*}
One can get a detailed and precise study of  homogeneous Sobolev space in \cite{Galdi1994book}.
\vspace{1mm}

\noindent {\bf Background}:

For the constant viscous fluid (i.e. $\delta=0$ in (\ref{T})), there is a lot of literature on the well-posedness of classical solutions to isentropic compressible Navier-Stokes equations.
When $\inf_{x}\rho_0>0$, it is well-known that the local existence of classical solutions for (\ref{prob1})-(\ref{1.8}) can be obtained by a standard Banach fixed point argument, see  Nash \cite{nash}.  And this method has been extended to be a global one by Matsumura-Nishida \cite{Matsumura1980} for initial data close to a non-vacuum equilibrium in some Sobolev space $H^s$ $(s > \frac{5}{2})$. However, when the density function connects to vacuum continuously, this approach is invalid because of $\inf_{x}\rho_0=0$, which occurs when some physical requirements are imposed, such as finite total initial mass, finite total initial energy, or vacuum appearing locally in some open sets.   When $\inf\rho_0(x)=0$, the local-in-time
well-posedness of strong solutions with vacuum was solved by Cho-Choe-Kim  \cite{Cho2004} and Cho-Kim  \cite{Cho2006}
in $\mathbb{R}^3$, where they introduced an initial compatibility condition to compensate the lack of a positive lower
bound of density.
Later, Huang-Li-Xin \cite{HLX2012} extended the local existence to a global one
with smooth initial data that are of small energy but possibly
large oscillations in $\mathbb{R}^3$. Recently,  Cai-Li\cite{CL2021} established  the global existence in 3D bounded domains, and in exterior domain with Navier-slip boundary by  Cai-Li-Lv\cite{CLL2021}. Jiu-Li-Ye \cite{JLY2014} proved the global existence of classical solution with arbitrarily
large data and vacuum in $\mathbb{R}$.

When viscosity coefficients are density-dependent (i.e. $\delta>0$ in (\ref{T})), the Navier-Stokes system has been received extensive
attentions in recent years, especially for the case with vacuum, where the well-posedness of solutions become
more challenging due to the degenerate viscosity.
For the physically significant case where the coefficients $\mu$ and $\lambda$ are not constants, numerous studies have examined the one-dimensional or spherically symmetric isentropic Navier-Stokes equations, please see  \cite{JXZ2005}, \cite{YYZ2001}, \cite{YZ2002-1}, \cite{YZ2002}, \cite{ZF2006} and the references therein.
For the multi-dimensional case,
a remarkable discovery of a new mathematical entropy function has been found by
Bresch-Desjardins \cite{BD2003} for the viscosity satisfying some mathematical relation, which provides
additional regularity on some derivative of the density. This observation was applied widely
in proving the global existence of weak solutions with vacuum for Navier-Stokes equations
and some related models (see \cite{BVY2022}, \cite{JX2008}, \cite{LX2015}, \cite{VY2016} and so on).
For strong solutions,  when $\delta=1$, Li-Pan-Zhu \cite{LPZ20172D} obtained the local existence of 2-D classical solution with far field vacuum, which also applies to the 2-D shallow water equations.
When $1<\delta\leq\min{\{3,\frac{\gamma+1}{2}\}}$, Li-Pan-Zhu\cite{LPZ2019} established the
local existence of 3-D classical solutions with arbitrarily large data and vacuum.  Recently, Xin-Zhu\cite{XZ2021advance} have proved the global well-posedness of regular solutions with vacuum for a class of smooth initial data that
are of small density but possibly large velocities. This result is the first one on the global existence of smooth solution which have large velocities and contain vacuum state for such degenerate system in three space dimensions.
 When $0<\delta<1$,
Xin-Zhu \cite{XZ2021jmpa} identify a class of initial data admitting a local regular solution with far field vacuum and finite energy in some inhomogeneous Sobolev spaces by introducing some new variables and initial compatibility conditions for the Cauchy problem in 3-D space.  Cao-Li-Zhu \cite{CLZ2022} proved the global existence
of 1-D classical solution with large initial data. Some other interesting results and discussions can also be
found in \cite{CLZ2024}, \cite{DXZ2023},  \cite{GLZ2019}, \cite{GL2016}, \cite{LXZ2016} and the references therein.
Recently, the authors of the present paper and their collaborators have investigated  the local existence and uniqueness of  strong solutions to
the initial-boundary value problem of 3-D compressible magnetohydrodynamic equations  with  viscosities as in (\ref{T}) with $\delta=1$ (i.e. $\mu(\rho)=\alpha \rho$ and $ \lambda(\rho)=\beta\rho$) in exterior domains with far-field vacuum \cite{LLZ2025}.
\vspace{2mm}

\noindent{\bf Key Difficulties and Strategies}:

In this paper, we focus on the local strong solution for 3-D Navier-Stokes equations \eqref{prob1} with density-dependent viscosities ( $0<\delta<1$ in \eqref{T}) in  exterior domains  with Navier-slip boundary conditions. 	 
The analysis of the degeneracies in momentum equations and the boundary conditions require some special attentions. The major difficulties include:

(i) Singular elliptic equations with singular source terms. For the case $0<\delta<1$, if  $\rho>0$,  the momentum equation $\eqref{prob1}_2$ can be formally rewritten as
\begin{align*}
u_{t}+u\cdot\nabla u+\frac{A\gamma}{\gamma-1}\nabla \rho^{\gamma-1}+\rho^{\delta-1}Lu=\frac{\delta}{\delta-1}\nabla \rho^{\delta-1}\cdot Q(u),
\end{align*}
where
\begin{align*}
Lu=-\alpha \Delta u-(\alpha+\beta)\nabla\mbox{div}u,\quad Q(u)=\alpha\left(\nabla u+(\nabla u)^{T}\right)+\beta \mbox{div}u\mathbb{I}_3.
\end{align*}
Note that the coefficient $\rho^{\delta-1}$ in front of the Lam\'{e} operator $L$ tends to $\infty$ as $\rho\rightarrow 0$ in the far field field instead of equaling to $1$ (the case $\delta=1$) in \cite{LPZ20172D}, \cite{LLZ2025} or tending to $0$  (the case $\delta>1$) in \cite{XZ2021advance}.
Therefore, the well-definedness of the term $\rho^{\delta-1}Lu$ must be established.
Moreover, the source term involves the singular gradient $\nabla \rho^{\delta-1}$. As a result, the quantities
\begin{align*}
\left(\nabla\rho^{\delta-1}, \rho^{\delta-1}Lu\right)
\end{align*}
will play a crucial role in our analysis of higher-order regularity for the fluid velocity.

(ii) For the initial-boundary value problem, particularly in establishing a priori estimates for $u$, we encounter some difficulties compared to the Cauchy problem. First, we apply the elliptic estimates to the equation
 \begin{equation*}
L(\rho^{\delta-1}u)=u_t+\mbox{other terms }
\end{equation*}
with the Navier-slip boundary conditions (\ref{boundary2-2}) to obtain an estimate for $\|\rho^{\delta-1}\nabla^2u\|_{L^2}$. This yields
\begin{equation*}
\|\rho^{\delta-1}u\|_{D^2}^2\leq C \|u_t\|^2_{L^2}+\|\mbox{other terms}\|^2_{L^2}+\|\nabla(\rho^{\delta-1}u)\|^2_{L^2}.
\end{equation*}
Consequently, it is necessary to estimate the weighted first-order derivative term $\|\rho^{\delta-1}\nabla u\|_{L^2}$.

Second, the estimation of $\int_0^t\|\rho^{\delta-1} \nabla^2 u\|^2_{D^2}ds$ is particularly challenging. Directly applying elliptic estimates to $L(\rho^{\delta-1} u)$ fails to provide control over $\|\rho^{\delta-1} \nabla^4u\|_{L^2}$ due to the absence of estimates for $\nabla^4\rho^{\delta-1}$, which seems
impossible in the current $H^3$ framework. Alternatively, the method of applying elliptic estimates to $L(\rho^{\delta-1} \nabla^2u)$, used in \cite{XZ2021jmpa} for the Cauchy problem, is also infeasible here because $D^2 u$ lacks boundary conditions. In order to overcome these difficulties, rather than estimating quantity $\|\xi \nabla^2 u\|^2_{D^2}$ directly, we instead develop estimates for quantity $\|\xi \nabla u\|^2_{D^3}$ based on the conormal space technique. Specifically, we estimate $\nabla u$ by  decomposing  into three components to be handled separately: the tangential derivative,
the normal component of the normal derivative,
and the tangential component of the normal derivative (detailed in Lemma \ref{lem-u-2}).\\

\noindent{\bf Main Results:}
\vspace{1mm}

Based on the above observations, and motivated by \cite{LPZ20172D}, the definition for regular solution to initial-boundary value problem is given as following.

\begin{defn}\label{def1}
(Regular solutions to initial-boundary value problem) Let $T>0$ be a finite constant. $(\rho,u)$ is called a regular solution to initial-boundary value problem (\ref{prob1})-(\ref{1.8}) in $\Omega\times[0,T]$ if $(\rho,u)$ solves this problem in the sense of distribution and
\begin{align*}
	(i)&\  \rho> 0, \quad \rho^{\gamma-1}\in C([0,T];H^3), \quad \nabla \rho^{\delta-1}\in L^{\infty}([0,T];D^1 \cap D^2);\\
     (ii)&  \ u\in C([0,T];H^3)\cap L^2([0,T];D^4),\quad u_t\in C([0,T];H^1)\cap L^2([0,T];D^2),\\
	& \rho^{\delta-1}\nabla u\in L^{\infty}([0,T];H^2)\cap L^2([0,T]; D^1\cap D^3),\quad \rho^{\frac{\delta-1}{2}}\nabla u_t\in L^\infty([0,T];L^2),\\
	&\rho^{\delta-1}\nabla^2 u\in C([0,T];H^1)\cap L^2([0,T];D^2).
	\end{align*}
\end{defn}

Now we are ready to state our main results.
\begin{thm}\label{thm1}
Let parameters $(\gamma,\delta,\alpha,\beta)$ satisfy
\begin{align}\label{constant}
	\gamma>1,\quad 0<\delta<1,\quad \alpha>0,\quad 2\alpha+3\beta\geq0.
	\end{align}
If the initial data $(\rho_0,u_0)$ satisfy the conditions:
\begin{align}\label{initial data}
	\rho_0>0,\quad(\rho_0^{\gamma-1},u_0)\in H^3,\quad \nabla\rho_0^{\delta-1}\in D^1\cap D^2,\quad \nabla \rho_0^{\frac{\delta-1}{2}}\in L^4,
	\end{align}
and the initial compatibility conditions:
\begin{align}\label{comp}
\nabla u_0&=\rho_0^{1-\delta}g_1,\quad Lu_0=\rho_0^{1-\delta}g_2,\quad
\nabla(\rho_0^{\delta-1}Lu_0)=\rho_{0}^{1-\delta}g_3,
\end{align}
for some $(g_1,g_2,g_3)\in L^2$.
Moreover, we assume that the initial data satisfy  the compatibility condition  with the boundary conditions.
Then there exists a time
$T_{*}>0$  and a unique local regular solution $(\rho,u)$ to the initial-boundary value problem (\ref{prob1})-(\ref{1.8}) satisfying:
\begin{align}\label{thm1-reg}
&\rho^{\frac{\delta-1}{2}}u_{tt}\in  L^2([0, T_{*}];L^2),\quad \rho^{\delta-1}\nabla^2u_t\in L^2([0, T_{*}];L^2),\nonumber\\
&\rho^{1-\delta}\in L^{\infty}([0,T_{*}];D^{1,6}\cap D^{2,3}\cap D^{3}),\nonumber\\
&\nabla \rho^{\delta-1}\in C([0, T_{*}];D^1\cap D^2), \quad \rho^{-1}\nabla\rho\in L^{\infty}([0, T_{*}];L^{\infty}\cap L^6\cap D^{1,3}\cap D^{2}).
\end{align}
Moreover, if $1<\gamma\leq 2$, then $(\rho,u)$ is a strong solution to (\ref{prob1})-(\ref{1.8}) in $[0,T_{*}]\times \Omega$.
\end{thm}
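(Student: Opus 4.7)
The overall plan is a reformulation--linearization--approximation scheme, with the essential novelty concentrated in the weighted and conormal a priori estimates forced by the exterior boundary. First I would introduce the new unknowns
\[
\phi = \frac{A\ga}{\ga-1}\rho^{\ga-1}, \qquad \psi = \nabla \rho^{\da-1},
\]
so that \ef{prob1} becomes a pure transport equation for $\phi$, a first-order equation for $\psi$ obtained by taking $\nabla$ of the mass equation divided by $\rho$, and a degenerate parabolic equation
\[
u_t + u\cdot\nabla u + \nabla\phi + \rho^{\da-1} Lu = \tfrac{\da}{\da-1}\psi \cdot Q(u)
\]
for $u$ with the singularity moved into the viscosity coefficient rather than the source. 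In these variables the far-field vacuum $\rho\to 0$ corresponds to $\rho^{\da-1}\to\iy$ while $\phi$, $\psi$ and the boundary data remain bounded; the compatibility conditions \ef{comp} furnish $L^2$ values of $u_t(0)$ and $\nabla(\rho^{\da-1}Lu)(0)$, which are exactly the initial data needed for the weighted time-derivative estimates demanded by Definition \ref{def1}.

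Next I would regularize the initial density from below by a small parameter $\eta>0$, linearize the reformulated system by freezing the advection velocity $v=u^k$ and the density $\rho=\rho^k$ in the $u$-equation, and set up a Picard iteration $(\rho^k,u^k)\to(\rho^{k+1},u^{k+1})$. Each step decouples into a transport problem for $(\rho,\phi,\psi)$ solvable by characteristics, and a linear non-degenerate (thanks to $\inf\rho\ge\eta$) parabolic problem for $u$ with the Navier-slip boundary condition \ef{boundary2-2}, to which classical theory applies. The real work is producing uniform bounds for $(\rho^{k+1},u^{k+1})$ in the class of Definition \ref{def1}, independent of $k$, $\eta$ and any cutoff of the data.

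The weighted estimates for $u$ proceed in four layers that must be closed simultaneously. A basic energy identity, using $u\cdot n=0$ and the positive semi-definiteness of $K$, gives $u\in L^\iy_tL^2_x$ together with $\rho^{(\da-1)/2}\nabla u\in L^2_tL^2_x$. Testing against $u_t$, then time-differentiating and testing against $u_{tt}$, yields the bounds for $\rho^{(\da-1)/2}u_t$ and $\rho^{(\da-1)/2}u_{tt}$. For second-order spatial regularity I would apply elliptic theory to
\[
L(\rho^{\da-1}u) = u_t + \text{lower-order terms},
\]
using the Navier-slip condition on $\rho^{\da-1}u$ (which inherits \ef{boundary2-2} up to factors of $\rho^{\da-1}$ that are locally bounded near $\pl\Oa$); this requires the preliminary control of $\|\rho^{\da-1}\nabla u\|_{L^2}$ flagged as the first IBVP difficulty, itself obtained by multiplying $\ef{prob1}_2$ by $\rho^{\da-1}u$. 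For the top-order piece $\|\rho^{\da-1}\nabla u\|_{D^3}$ the naive elliptic estimate on $L(\rho^{\da-1}\nabla^2u)$ fails because $\nabla^2u$ lacks a boundary condition and $\nabla^4\rho^{\da-1}$ is not available in $H^3$. I would therefore use the conormal decomposition announced in the introduction: split $\nabla u$ into a tangential part $Zu$ (for a family of smooth tangential vector fields $Z$ spanning the tangent bundle of $\pl\Oa$) and normal parts, control the tangential piece by commuting $Z$ through $L$ (the Navier-slip class is preserved up to lower-order boundary corrections) and applying the already-obtained elliptic estimate, and recover the normal derivative algebraically from the momentum equation, trading it for $u_t$ plus terms already bounded.

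The main obstacle is precisely this last step: the conormal commutators $[Z,L]$, $[Z,\rho^{\da-1}]$ and $[Z,K]$ must be organized so that the inhomogeneous boundary contributions they generate are absorbable into the right-hand side of the elliptic problem without loss of derivatives, and the interaction with the weight $\rho^{\da-1}$ near the far field (where $Z$ is simply $\nabla$) must be handled by the decay encoded in $\psi\in D^1\cap D^2$ and the bounds on $\rho^{-1}\nabla\rho$ listed in \ef{thm1-reg}. Once the uniform estimates are in place, I would pass $k\to\iy$ by Aubin--Lions compactness to obtain a solution of the regularized problem, and then send $\eta\to 0$ by the same compactness to remove the artificial lower bound on $\rho_0$, producing $(\rho,u)$ satisfying \ef{thm1-reg}. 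Uniqueness follows from a difference estimate at the lower regularity of the basic energy identity, using $\rho>0$ pointwise to handle weighted terms locally and the far-field decay globally. Finally, when $1<\ga\le 2$, $\rho=(\tfrac{\ga-1}{A\ga}\phi)^{1/(\ga-1)}\in C([0,T_*];H^3)$ is classically differentiable, so the distributional formulation coincides with the pointwise one, giving the strong-solution conclusion.
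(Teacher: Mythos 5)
Your overall scheme is the same as the paper's (reformulation in $\phi,\psi$; linearization keeping the degenerate terms on the unknown $u$; approximation away from vacuum; weighted energy plus Navier-slip elliptic estimates; a conormal decomposition for the top order; Aubin--Lions limits; the $1<\gamma\le 2$ remark). However, there is a concrete gap exactly at the step the paper identifies as the crux of the IBVP. You propose to obtain the singular weighted first-order bound $\|\rho^{\delta-1}\nabla u\|_{L^2}$ by multiplying the momentum equation by $\rho^{\delta-1}u$. That multiplier only yields
\begin{equation*}
\sup_{t}\|\rho^{\frac{\delta-1}{2}}u(t)\|_{L^2}^2+\int_0^t\|\rho^{\delta-1}\nabla u\|_{L^2}^2\,ds,
\end{equation*}
i.e.\ a time-integrated control of the weighted gradient, whereas closing the scheme requires the $L^\infty_t$ bound of $\|\rho^{\delta-1}\nabla u(t)\|_{L^2}$: it appears as a remainder in the elliptic estimate for $L(\rho^{\delta-1}u)$ at each fixed time (needed to get $\sup_t\|\rho^{\delta-1}\nabla^2u\|_{L^2}$, which must itself be propagated through the iteration since the linearization assumptions involve $\|g\nabla^2 v\|$), and in the Gronwall arguments of the later layers. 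The paper instead tests with $\rho^{\delta-1}u_t$, which turns the viscous term into $\frac{d}{dt}\int\rho^{2(\delta-1)}\left(\alpha|\mbox{curl}\,u|^2+(2\alpha+\beta)|\mbox{div}\,u|^2\right)$ and hence gives the sup-in-time bound; this is precisely why the compatibility condition $g_1=\rho_0^{\delta-1}\nabla u_0\in L^2$ (absent in the Cauchy-problem setting) is imposed, a role your proposal misses since you use the compatibility data only for $u_t(0)$ and $\nabla(\rho_0^{\delta-1}Lu_0)$. The same pattern (weighted multipliers $\rho^{\delta-1}u_t$, $\rho^{\delta-1}u_{tt}$, with the pressure terms handled by an integration by parts in time as in the terms $R_5$, $I_6$) is what produces the bounds on $\rho^{\delta-1}\nabla u_t$ needed for the elliptic estimate of $\rho^{\delta-1}u_t$; plain tests against $u_t$ and $u_{tt}$ do not suffice.

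Secondarily, your conormal step is underspecified at the point where it is delicate: ``recovering the normal derivative algebraically from the momentum equation'' only controls the normal-normal component (the $\mbox{div}\,u$ part, via the equation for $\nabla\mbox{div}\,u$ and the vorticity equation). For the tangential component of $\partial_n u$ one needs an auxiliary unknown such as $\eta=\chi\left(\mbox{curl}\,u\times n+\Pi(Ku)\right)$, which vanishes on $\partial\Omega$ thanks to the Navier-slip condition, so that Dirichlet elliptic estimates can be applied to $\xi\eta$ through its parabolic equation; without some such construction the vorticity part of $\|\xi\nabla u\|_{D^3}$, and hence $\int_0^t\|\xi\nabla^2u\|_{D^2}^2\,ds$, is not controlled.
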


\begin{rmk}
(i)\ The assumption $\nabla \rho_0^{\frac{\delta-1}{2}}\in L^4$ will only be used in
the  limit from the non-vacuum flows to the flow with far-field vacuum in $\S$3.5.\\
(ii)\  It should be noted  that the compatibility conditions (\ref{comp}) employed here differ from those for the Cauchy problem in \cite{XZ2021jmpa}. This adjustment is required by the need to estimate the weighted first-order derivative term $\|\rho^{\delta-1}\nabla u\|^2_{L^2}$ in order to close the a priori estimates for the initial-boundary value problem.
\end{rmk}

\begin{rmk}
We remark that the initial data \eqref{initial data} contains a large class of functions, for example,
\begin{align*}
	\rho_{0}(x)=\frac{1}{1+|x|^{2\iota}},\quad u_0(x)\in C_0^3(\Omega),
\end{align*}
where $\frac{3}{4(\gamma-1)}< \iota<\frac{1}{4(1-\delta)}$.  Furthermore, if $\rho_{0}(x)$ decays like $|x|^{-2l}$ in the far-field,  then  $\nabla\rho_0^{\delta-1}$ (with $0<\delta<1$) cannot belong to  $L^2$  for   any $l>0$.
\end{rmk}

\begin{rmk} Theorem \ref{thm1} shows that $\nabla \rho^{\delta-1}\in L^{\infty}$ $(0< \delta<1)$ is bounded, which means that the vacuum occurs if and only if in the far field.
\end{rmk}
\vspace{1mm}

\noindent {\bf Structure of the Paper:}

The rest of this paper is organized as follows.
Section \ref{section2} presents preliminary lemmas, including Sobolev inequalities and elliptic estimates.
Section \ref{section3} is devoted to proving Theorem \ref{thm1}. First,
we enlarge the original initial-boundary value problem (\ref{prob1})-(\ref{1.8})  into (\ref{ibvp-sect3}) in terms of the following variables
\begin{align*}
 \phi=\frac{A\gamma}{\gamma-1}\rho^{\gamma-1},\quad \psi=\frac{\delta}{\delta-1}\nabla\rho^{\delta-1},\quad u
 \end{align*}
in $\S$\ref{sub3.1}. Then the original Navier-Stokes equations (\ref{prob1}) can be  enlarged as
\begin{equation*}
\left\{
\begin{array}{llll}
\phi_t+u\cdot\nabla\phi+(\gamma-1)\phi\mbox{div}u=0,\\
u_{t}+u\cdot\nabla u+\nabla\phi +a\phi^{2\kappa}Lu=\psi\cdot Q(u)\\
\psi_t+\nabla(u\cdot\psi)+(\delta-1)\psi\mbox{div}u+\delta a \phi^{2\kappa}\nabla \mbox{div}u=0,
\end{array}
\right.
\end{equation*}
where
\begin{align*}
Lu=-\alpha \Delta u-(\alpha+\beta)\nabla\mbox{div}u,\quad Q(u)=\alpha\left(\nabla u+(\nabla u)^{T}\right)+\beta \mbox{div}u\mathbb{I}_3.
\end{align*}

  Second, in $\S$\ref{sub3.2}, we construct a  linearization (\ref{linear}) of the  nonlinear problem (\ref{ibvp-sect3}). Our approach is based on a careful structural analysis of the nonlinear equations. For the linearized problem, we establish the existence of global approximate solutions,  provided that $\phi(x,0) \equiv \phi_0$ has a positive lower bound $\sigma$.
Regarding the structure of the linearized equations (\ref{linear}), the following two points should be noted:
(i) \
In the linearization procedure, following the approach of  \cite{XZ2021jmpa},
 we first linearize the equation for $\xi=\phi^{2\kappa}$ as:
 \begin{align}\label{intro-xi}
\xi_t+v\cdot\nabla\xi+(\delta-1)g\mbox{div}v=0,
\end{align}
and then use $\xi$ to redefine $\psi=\frac{a\delta}{\delta-1}\nabla \xi$.
 The linearized equations for $u$ are chosen as
\begin{align*}
u_{t}+v\cdot\nabla u+\nabla\phi +a\xi Lu=\psi\cdot Q(u).
\end{align*}
Combining equation (\ref{intro-xi}) with the relation $\psi=\frac{a\delta}{\delta-1}\nabla \xi$  yields
 \begin{align*}\label{intro-psi}
	\psi_t+\sum_{k=1}^3A_k(v)\partial_k\psi+(\nabla v)^T\psi+a\delta(g\nabla\mbox{div}v+\nabla g\mbox{div}v)=0,
\end{align*}
which provides the necessary estimates for $\psi$.
It is important to emphasize that, when establishing a priori estimates, $\xi$ and $\phi$ are treated as independent functions, without assuming any relation such as $\xi = \phi^{2\kappa}$.

(ii) \ Unlike the standard choice in the  Cauchy problem (see, e.g., \cite{XZ2021jmpa}), where terms such as
$v\cdot\nabla v$ and $\psi\cdot Q(v)$ are typically used in the linearization of the velocity equation,
we instead employ the terms $v\cdot\nabla u$ and $\psi \cdot Q(u)$ in the linearized equation for $u$.
This  choice is essential to our analysis, as it enables the derivation of singular weighted estimates for the velocity field.

In $\S$\ref{sub3.3}, the a priori estimates independent of the lower bound $\sigma$ of the solutions $(\phi^{\sigma},\xi^{\sigma},u^{\sigma})$ to the linearized problem (\ref{linear}) are established.
For the initial-boundary value problem, particularly in establishing a priori estimates for the velocity field, we encounter two main difficulties compared with the Cauchy problem.
First,
 the weighted first-order derivative term $\|\xi\nabla u\|_{L^2}$ will appear as a remainder when applying elliptic estimates to the operator $L(\xi u)$ on exterior domains.
 To close the a priori estimates, it becomes necessary to establish a more singular weighted estimate for $\|\xi\nabla u\|_{L^2}$, rather than the natural weighted estimate  $\|\sqrt{\xi}\nabla u\|_{L^2}$.
Second, the estimation of $\|\xi \nabla^2 u\|_{D^2}$
presents further challenges. On one hand, directly applying elliptic estimates to $L(\xi u)$ does not yield control of
$\|\xi \nabla^4u\|_{L^2}$, due to the lack of estimates for $\nabla^4\xi$. On the other hand, applying elliptic estimates to $L(\xi D^2u)$
is also not feasible, as no boundary conditions are available for $D^2u$. To circumvent these issues, we employ an approach based on conormal space techniques.

In $\S$\ref{sub3.4}, based on the above analysis for the choice of the linearization, the unique solvability of the classical solution away from vacuum to the nonlinear reformulated problem (\ref{ibvp-sect3}) through an iteration process is given, whose life span is uniformly positive with respect to the lower bound $\sigma $ of $\phi_0$.
Based on the conclusions in $\S$\ref{sub3.4}, the solution to the nonlinear reformulated problem which allows for vacuum in the far field can be recovered by taking the limit as $\sigma\rightarrow 0$ in  $\S$\ref{sub3.5}.
Finally, in $\S$\ref{sub3.6}, it is shown that the existence of a solution to the reformulated problem (\ref{ibvp-sect3})  in fact implies Theorem \ref{thm1}.

\section{Preliminaries}\label{section2}
In this section, we list some basic lemmas that will be used later. The first one is the well-known Gagliardo-Nirenberg inequalities.
\begin{lem}\label{lem-2-1}
Let $\Omega\subset\mathbb{R}^{3}$ be the exterior of a bounded domain with  smooth boundary. Then
for $p\in[2,6]$, $q\in(1,\infty)$, and $r\in(3,\infty)$, there exists some generic constant $C>0$
such that for
$f\in H^{1}(\Omega)$, and $g\in L^{q}(\Omega)\cap D^{1,r}(\Omega)$,
it holds that
\begin{align}
\|f\|^p_{L^{p}}\leq C\|f\|^{\frac{6-p}{2}}_{L^{2}}\|\nabla f\|^{\frac{3p-6}{2}}_{L^{2}},\quad
\|g\|_{L^{\infty}}\leq C\|g\|^{\frac{q(r-3)}{3r+q(r-3)}}_{L^{q}}\|\nabla g\|^{\frac{3r}{3r+q(r-3)}}_{L^{r}}.\nonumber
\end{align}
\end{lem}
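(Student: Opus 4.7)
The plan is to reduce both inequalities to their classical analogues on $\mathbb{R}^3$ via an extension argument. Since $\Omega$ is the exterior of a bounded domain with smooth boundary, there exists a bounded linear extension operator from $H^1(\Omega)$ (respectively from $L^q(\Omega)\cap D^{1,r}(\Omega)$) to $H^1(\mathbb{R}^3)$ (respectively to $L^q(\mathbb{R}^3)\cap D^{1,r}(\mathbb{R}^3)$), constructed via the standard reflection-and-cutoff procedure near $\partial\Omega$; see \cite{Galdi1994book} for the homogeneous scale. Consequently, it is enough to establish both inequalities on the whole space and then restrict back to $\Omega$.

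For the first inequality, I would combine the Sobolev embedding $\|f\|_{L^6(\mathbb{R}^3)}\leq C\|\nabla f\|_{L^2(\mathbb{R}^3)}$ with $L^p$ interpolation between $L^2$ and $L^6$. Setting $\theta=(6-p)/(2p)\in[0,1]$ so that $1/p=\theta/2+(1-\theta)/6$, H\"older's inequality gives $\|f\|_{L^p}\leq\|f\|_{L^2}^{\theta}\|f\|_{L^6}^{1-\theta}$. Substituting the Sobolev bound and raising to the $p$-th power yields exponents $p\theta=(6-p)/2$ on $\|f\|_{L^2}$ and $p(1-\theta)=(3p-6)/2$ on $\|\nabla f\|_{L^2}$, exactly as stated.

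For the second inequality, I would combine Morrey's embedding $W^{1,r}(\mathbb{R}^3)\hookrightarrow L^\infty(\mathbb{R}^3)$ (valid for $r>3$) with a scaling argument. Considering the rescaling $g_\lambda(x)=g(\lambda x)$, the requirement that $\|g\|_{L^\infty}\leq C\|g\|_{L^q}^{a}\|\nabla g\|_{L^r}^{1-a}$ be scale-invariant translates into $-3a/q+(1-a)(1-3/r)=0$, which yields $a=q(r-3)/[3r+q(r-3)]$ and $1-a=3r/[3r+q(r-3)]$. These match the exponents in the statement. The argument is then completed by applying Morrey on the rescaled function and optimizing $\lambda$, or equivalently by a direct dyadic decomposition and Young's inequality.

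I do not anticipate a genuine obstacle here: the whole proof is routine once the extension operator is available. The only mild subtlety is to verify that a single extension works simultaneously in $L^q$ and $D^{1,r}$ for the second inequality, but this is guaranteed by the explicit reflection construction, which preserves both an $L^p$ norm of the function and an $L^p$ norm of its gradient, independently for any admissible $p$.
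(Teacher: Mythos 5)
The paper itself offers no proof of this lemma (it is quoted as the well-known Gagliardo--Nirenberg inequalities, with Galdi's book cited earlier for the homogeneous Sobolev framework), so the only question is whether your argument is airtight. Your exponent bookkeeping in both inequalities is correct, and reducing to $\mathbb{R}^3$ is a viable strategy, but the reduction as you state it has a genuine gap: boundedness of an extension operator from $H^{1}(\Omega)$ to $H^{1}(\mathbb{R}^3)$ (or from $L^{q}\cap D^{1,r}$ with additive norms) is not enough. Applying the whole-space multiplicative inequality to $Ef$ produces $\|\nabla (Ef)\|_{L^{2}(\mathbb{R}^3)}$, and for the standard reflection-and-cutoff extension this is only bounded by $C\big(\|\nabla f\|_{L^{2}(\Omega)}+\|f\|_{L^{2}(\Omega\cap B)}\big)$: the gradient of $\chi\,(f\circ\Phi)$ contains the term $\nabla\chi\,(f\circ\Phi)$, which is controlled by the function itself, not by its gradient. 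Hence your closing claim that the reflection construction preserves the $L^{p}$ norm of the gradient ``independently'' is false as stated, and with it the pure multiplicative form does not follow; you would only obtain $\|f\|_{L^{p}}\leq C\|f\|_{L^{2}}^{\theta}\big(\|\nabla f\|_{L^{2}}+\|f\|_{L^{2}}\big)^{1-\theta}$, which is strictly weaker than the lemma, and similarly for the $L^{\infty}$ estimate.

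The gap is fixable by standard means, and you should state the fix explicitly. One option is to modify the extension: subtract from the function its mean $c$ over a fixed bounded collar $A$ adjacent to $\partial\Omega$ before cutting off, i.e.\ set $Ef=c+\chi\,\big((f-c)\circ\Phi\big)$ inside the obstacle; then Poincar\'e--Wirtinger on $A$ yields $\|\nabla(Ef)\|_{L^{r}(\mathbb{R}^3)}\leq C\|\nabla f\|_{L^{r}(\Omega)}$ while at the same time $\|Ef\|_{L^{q}(\mathbb{R}^3)}\leq C\|f\|_{L^{q}(\Omega)}$, so a single operator is bounded on the homogeneous gradient seminorm and on the $L^{q}$ norm simultaneously, and your reduction then goes through for both inequalities. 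Alternatively, for the first inequality one can avoid extension altogether: since $\Omega$ is an exterior domain, any $f\in D^{1,2}(\Omega)$ satisfies $\|f-f_{\infty}\|_{L^{6}}\leq C\|\nabla f\|_{L^{2}}$ for some constant $f_{\infty}$ (Galdi), and $f\in L^{2}$ forces $f_{\infty}=0$, so $\|f\|_{L^{6}(\Omega)}\leq C\|\nabla f\|_{L^{2}(\Omega)}$; interpolating between $L^{2}$ and $L^{6}$ then gives the first estimate directly, which is presumably the route intended by the paper. A minor further point: plain Morrey applied to $g_{\lambda}$ and optimized in $\lambda$ interpolates between $L^{r}$ and $D^{1,r}$, not between $L^{q}$ and $D^{1,r}$; to obtain the stated exponents on $\mathbb{R}^3$ you need the local Morrey estimate with the lower-order term measured in $L^{q}$ (or the dyadic argument you mention as an alternative), so that variant should be the one you actually invoke.
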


The second lemma presents  some compactness results obtained via  Aubin-Lions Lemma.
\begin{lem}\label{lem-AL}(\cite{A-L})
Let $X_0\subset X\subset X_1$ be three Banach spaces. Suppose that $X_0$ is compactly embedded in $X$, and $X$ is continuously embedded in $X_1$. Then the following statements hold:
\\ $(i)$\  If $F$ is bounded in $L^p([0,T]; X_0)$ for $1\leq p<\infty$, and $F_t$ is bounded in $L^1([0,T]; X)$, then $F$ is relatively compact in $L^p([0,T];X)$;\\
$(ii)$\ If $F$ is bounded in $L^{\infty}([0,T|;X_0)$ and $F_t$ is bounded in $L^{p}([0,T];X_1)$ for $p>1$,
then $F$ is relatively compact in $C([0,T];X)$.
\end{lem}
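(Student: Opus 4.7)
The plan is to reduce the lemma to three standard ingredients: an Ehrling-type interpolation inequality, a time-equicontinuity estimate derived from the bound on $F_t$, and a compactness criterion (Fr\'echet--Kolmogorov--Riesz for $L^p$ in part (i), Arzel\`a--Ascoli for $C$ in part (ii)). The central observation is that the compact embedding $X_0 \hookrightarrow X$ allows one to trade a small amount of the $X_0$-norm for a large multiple of the $X_1$-norm.

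First I would establish the Ehrling inequality: for every $\varepsilon > 0$ there exists $C_\varepsilon > 0$ with
\[
\|u\|_X \leq \varepsilon \|u\|_{X_0} + C_\varepsilon \|u\|_{X_1}, \quad \forall\, u \in X_0.
\]
This follows by contradiction. Assuming failure, extract a sequence $\{u_n\} \subset X_0$ with $\|u_n\|_X = 1$, $\{\|u_n\|_{X_0}\}$ bounded, and $\|u_n\|_{X_1} \to 0$; compactness of $X_0 \hookrightarrow X$ produces a subsequence converging in $X$ to some $u$, while the continuous embedding $X \hookrightarrow X_1$ forces $u = 0$, contradicting $\|u\|_X = 1$.

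For part (i), take any sequence $\{f_n\} \subset F$ and verify the Fr\'echet--Kolmogorov--Riesz criterion in $L^p([0,T]; X)$. Uniform boundedness is immediate from the continuous embedding $X_0 \hookrightarrow X$. For equicontinuity in the mean, write
\[
f_n(t+h) - f_n(t) = \int_t^{t+h} \partial_s f_n(s)\,ds,
\]
apply the Ehrling inequality pointwise in $t$, raise to the $p$-th power, and integrate over $t \in [0, T-h]$; together with the bound $\|\partial_s f_n\|_{L^1([0,T]; X)} \leq M$ this yields
\[
\|f_n(\cdot + h) - f_n(\cdot)\|_{L^p([0,T-h]; X)}^p \leq 2^p \varepsilon^p \|f_n\|_{L^p([0,T]; X_0)}^p + (C_\varepsilon M)^p\, h^{p}\, T,
\]
which is made uniformly small in $n$ by choosing $\varepsilon$ small and then $h$ small. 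Pointwise relative compactness at almost every $t \in (0,T)$ is recovered by mollification in time: setting $f_n^{\eta}(t) = (\rho_\eta * f_n)(t)$, the convolution inherits $X_0$-bounds via Jensen's inequality, so Arzel\`a--Ascoli (for fixed $\eta$) together with a Cantor diagonal argument as $\eta \to 0$, combined with the time-equicontinuity above, produces a subsequence converging in $L^p([0,T]; X)$.

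For part (ii), the stronger hypotheses give H\"older-type continuity in $X_1$:
\[
\|f_n(t_2) - f_n(t_1)\|_{X_1} \leq (t_2 - t_1)^{1 - 1/p}\, \|\partial_s f_n\|_{L^p([0,T]; X_1)},
\]
so that the Ehrling inequality together with the uniform $L^\infty([0,T]; X_0)$ bound yields equicontinuity of $\{f_n\}$ in $C([0,T]; X)$. Pointwise compactness of $\{f_n(t)\}$ in $X$ at each fixed $t$ follows from the $X_0$-bound and the compact embedding $X_0 \hookrightarrow X$, so Arzel\`a--Ascoli delivers a convergent subsequence in $C([0,T]; X)$. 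The main obstacle is the pointwise-compactness step in part (i), since the $L^1$ control on $F_t$ is too weak to yield direct equicontinuity in $X$ at individual times; the mollification-plus-diagonal device described above is the standard trick for overcoming this subtlety, and the rest is routine bookkeeping.
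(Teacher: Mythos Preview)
The paper does not prove this lemma; it simply cites Simon \cite{A-L} and states the result for later use. Your proposal is therefore not being compared against an in-paper argument but against the classical proof, and your outline follows the standard route (Ehrling interpolation, translation estimate, mollification/diagonal extraction for (i), and H\"older-in-$X_1$ plus Arzel\`a--Ascoli for (ii)).

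Two small points. First, in part (i) your explicit bound $(C_\varepsilon M)^p h^{p}T$ for the translation term is not what one actually gets from $F_t\in L^1([0,T];X)$: since $\int_t^{t+h}\|\partial_s f_n\|_{X_1}\,ds\leq M$ only, the correct estimate is
\[
\int_0^{T-h}\Big(\int_t^{t+h}\|\partial_s f_n\|_{X_1}\,ds\Big)^p\,dt
\;\leq\; M^{p-1}\int_0^{T-h}\int_t^{t+h}\|\partial_s f_n\|_{X_1}\,ds\,dt
\;\leq\; M^{p}\,h,
\]
which still goes to $0$ with $h$ and so your conclusion survives. Second, in part (ii) the hypothesis $f_n\in L^\infty([0,T];X_0)$ gives $f_n(t)\in X_0$ only for almost every $t$; to apply Arzel\`a--Ascoli you should first observe that the H\"older estimate in $X_1$ makes each $f_n$ continuous into $X_1$, then get pointwise precompactness in $X$ at \emph{every} $t$ by approximating with nearby times where the $X_0$-bound holds and invoking Ehrling. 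With these corrections the argument is complete.
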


The following lemma  allows one to estimate the $H^{m}$-norm of a vector-valued function $v$ based on  its
$H^{m-1}$-norm of $\mbox{curl}v$ and $\mbox{div} v$ (see \cite{XX2007}).
\begin{lem}\label{lem-div-curl}
Let $\Omega$ be a domain in $\mathbb{R}^{N}$ with smooth boundary $\partial\Omega$ and outward normal $n$. Then there exists a constant $C>0$,
such that
\begin{align*}
&\|v\|_{H^{s}(\Omega)}\leq C\left(\|\mbox{div}v\|_{H^{s-1}(\Omega)}+\|\mbox{curl}v\|_{H^{s-1}(\Omega)}
+ |v\cdot n|_{H^{s-\frac{1}{2}}(\partial\Omega)}+\|v\|_{L^2}\right),\\
&\|v\|_{H^{s}(\Omega)}\leq C\left(\|\mbox{div}v\|_{H^{s-1}(\Omega)}+\|\mbox{curl}v\|_{H^{s-1}(\Omega)}
+ |v\times n|_{H^{s-\frac{1}{2}}(\partial\Omega)}+\|v\|_{L^2}\right),
\end{align*}
for any $v\in H^{s}(\Omega)$, $s\geq1$.
\end{lem}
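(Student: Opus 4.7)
The plan is to view $(\mathrm{div},\mathrm{curl})$ together with either the scalar boundary operator $v\cdot n$ or the tangential boundary operator $v\times n$ as an elliptic boundary value problem in the sense of Agmon--Douglis--Nirenberg, and to prove the estimate by induction on $s$, reducing everything to the base case $s=1$.

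For the base case $s=1$, I would start from the pointwise identity
$$-\Delta v=\mathrm{curl}\,\mathrm{curl}\,v-\nabla\,\mathrm{div}\,v,$$
pair it with $v$, and integrate by parts on $\Omega$ to obtain a Green-type identity of the form
$$\int_{\Omega}|\nabla v|^{2}\,dx=\int_{\Omega}|\mathrm{div}\,v|^{2}\,dx+\int_{\Omega}|\mathrm{curl}\,v|^{2}\,dx+\int_{\partial\Omega}\mathcal{B}(v)\,dS,$$
where $\mathcal{B}(v)$ is a quadratic boundary form in the first-order traces of $v$. Decomposing the boundary trace of $v$ into its normal and tangential parts and integrating by parts along $\partial\Omega$ (which introduces lower-order terms through the second fundamental form), one shows that $\mathcal{B}(v)$ is controlled entirely by either $|v\cdot n|_{H^{1/2}}$ or $|v\times n|_{H^{1/2}}$ combined with $\|v\|_{H^{1}}$, using the standard trace inequality $|w|_{H^{1/2}(\partial\Omega)}\le C\|w\|_{H^{1}(\Omega)}$. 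A Young's inequality absorbs a small multiple of $\|\nabla v\|_{L^{2}}^{2}$ into the left-hand side, and the remaining low-order contributions are bounded by $\|v\|_{L^{2}}^{2}$, which yields the $s=1$ estimate.

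For the inductive step $s\to s+1$, I would use a partition of unity on $\bar\Omega$ consisting of interior patches and boundary patches. On interior patches the estimate reduces to the classical interior $H^{s+1}$ regularity for the elliptic div--curl system, handled by applying the inductive hypothesis to $\partial_{i}v$ on slightly larger cutoff domains. On boundary patches I would flatten the boundary via a local diffeomorphism and differentiate tangentially: applying $\partial_{\tau}^{\alpha}$ with $|\alpha|=1$ and using the inductive hypothesis on $\partial_{\tau}^{\alpha}v$ controls all tangential derivatives of $v$ of order $s+1$, since the commutators $[\partial_{\tau}^{\alpha},\mathrm{div}]$, $[\partial_{\tau}^{\alpha},\mathrm{curl}]$, $[\partial_{\tau}^{\alpha},n\cdot]$ and $[\partial_{\tau}^{\alpha},n\times]$ are all of order no higher than $s$ acting on $v$ and are bounded by the inductive hypothesis. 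The normal derivatives of $v$ of order $s+1$ are recovered algebraically from the identities that express $\partial_{n}v^{n}$ in terms of $\mathrm{div}\,v$ and tangential derivatives, and $\partial_{n}v^{\tau}$ in terms of the tangential components of $\mathrm{curl}\,v$ and tangential derivatives, transferring the missing derivative from quantities already estimated.

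The main obstacle is the base case: proving that the boundary form $\mathcal{B}(v)$ can be controlled by a single scalar trace $|v\cdot n|_{H^{1/2}}$ (or by the single vector tangential trace $|v\times n|_{H^{1/2}}$), rather than by the full $H^{1/2}$ trace of $v$. This is equivalent to verifying the Lopatinskii--Shapiro complementing condition for the Hodge system with the chosen boundary operator, which in curved geometry produces lower-order couplings through the Weingarten map of $\partial\Omega$; these couplings are harmless because they are of order zero and can be absorbed into the $\|v\|_{L^{2}}$ term via Young's inequality. Once this is carried out cleanly in the $s=1$ case, the induction is essentially mechanical, the only bookkeeping being the trace spaces $H^{s-1/2}(\partial\Omega)$ inherited at each step from the tangential differentiation.
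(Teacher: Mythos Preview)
The paper does not prove this lemma; it simply cites \cite{XX2007} (Xiao--Xin, \emph{Commun.\ Pure Appl.\ Math.}\ 2007) after the statement and moves on. So there is no in-paper argument to compare against.

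Your proposed strategy is the standard one and is essentially how the result is established in the cited literature: a Green-type identity for the Hodge system in the base case $s=1$, followed by tangential differentiation plus algebraic recovery of normal derivatives for the induction. Two places where your sketch is a bit loose but easily repaired: (i) in the base case, the boundary form $\mathcal{B}(v)$ contains, after integration by parts along $\partial\Omega$, terms like $\int_{\partial\Omega} \mathrm{II}(v_\tau,v_\tau)\,dS$ involving the second fundamental form; these are genuinely quadratic in the full tangential trace $v_\tau$, not in $v\cdot n$, so you cannot absorb them into $|v\cdot n|_{H^{1/2}}$ alone. The correct move is to bound them by $C|v|_{L^2(\partial\Omega)}^2\le \varepsilon\|\nabla v\|_{L^2}^2+C(\varepsilon)\|v\|_{L^2}^2$ via the trace inequality with interpolation, which is what your final sentence effectively says but could be stated more directly. (ii) For the inductive step, the commutators you write are not quite of order~$s$ acting on $v$; they are of order zero in the coefficients times derivatives of order at most $s$ of $v$, which is what you need. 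With these clarifications your outline is a correct proof of the lemma.
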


In addition, since the first Betti number of the exterior domain $\Omega$  vanishes,   we can apply Theorem 3.2 in \cite{wahl} to obtain the following  refined estimate  of $\|\nabla v\|_{L^2}$ which is crucial in the case of  exterior domain to deal with  $\|\xi u\|_{D^1}$. We also note that the topological property of $\Omega$ is necessary  for the following  Proposition.
\begin{prop} (\cite{wahl})\label{prop-imp} Let $\nabla v\in L^{2}(\Omega)$, and $v\cdot n|_{\partial\Omega}=0$. The estimate
	\begin{equation}\label{important}
		\|\nabla v\|_{L^2}\leq C_{\Omega}(\|\mbox{div}v\|_{L^2}+\|\mbox{curl}v\|_{L^2})
	\end{equation}
	is true for all $v$ as above if and only if $\Omega$ has a first Betti number of zero.
\end{prop}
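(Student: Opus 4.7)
The plan is to prove both directions of the equivalence, with most of the work going into the ``if'' implication, while the ``only if'' direction reduces to exhibiting a counterexample. The starting point is the classical integration-by-parts identity: for smooth $v$ with $v\cdot n|_{\partial\Omega}=0$ and sufficient decay at infinity, one computes
\bee
\int_\Omega |\nabla v|^2\,dx = \int_\Omega (\mbox{div}\,v)^2\,dx + \int_\Omega |\mbox{curl}\,v|^2\,dx + \int_{\partial\Omega} B(v_\tau, v_\tau)\,dS,
\eee
where $B$ is a bilinear form built from the shape operator of $\partial\Omega$; this arises by rewriting $\partial_j v^i\partial_j v^i = \partial_j v^i \partial_i v^j + |\mbox{curl}\,v|^2$, integrating the first term by parts twice, and using $v\cdot n = 0$ to convert the normal derivative of $v$ into tangential derivatives of $n$. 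The identity alone does not immediately yield the estimate because $B$ need not have a favorable sign, so further structural information on $\Omega$ must be exploited.

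To close the argument, I would invoke a Helmholtz-type decomposition in the homogeneous space $\{v : \nabla v \in L^2(\Omega),\ v\cdot n|_{\partial\Omega} = 0\}$. The obstruction to recovering $\nabla v$ from $(\mbox{div}\,v,\mbox{curl}\,v)$ is the space of \emph{harmonic Neumann fields}: vector fields $h$ with $\mbox{div}\,h = 0$, $\mbox{curl}\,h = 0$, $h\cdot n|_{\partial\Omega}=0$ and appropriate decay. Since $\mbox{curl}\,h = 0$ and the first Betti number of $\Omega$ vanishes, every closed $1$-form is exact, so one writes $h = \nabla\phi$ globally; then $\Delta\phi = 0$ in $\Omega$, $\partial_n\phi|_{\partial\Omega} = 0$, and $\phi$ tends to a constant at infinity. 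Uniqueness for this exterior Neumann problem forces $\phi$ to be constant, whence $h=0$.

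With the kernel shown to be trivial, I would finish via a Peetre--Tartar / compactness-by-contradiction argument: assume the inequality fails, extract a sequence $v_k$ with $\|\nabla v_k\|_{L^2}=1$ and $\|\mbox{div}\,v_k\|_{L^2}+\|\mbox{curl}\,v_k\|_{L^2}\to 0$, and pass to a weak limit $v_\infty$ in the homogeneous space. The limit satisfies the harmonic Neumann conditions and hence vanishes by the previous paragraph. A local strong-convergence step (using the identity above together with trace estimates near $\partial\Omega$, interior elliptic regularity away from it, and div-curl control on bounded subdomains) then shows $\|\nabla v_k\|_{L^2}\to 0$, contradicting the normalization. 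The converse direction is direct: when the first Betti number is positive, a nonzero harmonic Neumann field $h$ exists, and it violates the estimate since its divergence and curl vanish while $\nabla h\not\equiv 0$.

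The main obstacle is making the compactness step rigorous in the unbounded exterior setting, where the homogeneous norm $\|\nabla v\|_{L^2}$ does not control any $L^p$-norm of $v$ itself and the embedding into lower-order spaces is noncompact. I would handle this by truncating to an increasing family of bounded annular regions, extracting local limits, and stitching them together using the decay forced by $\nabla v \in L^2(\Omega)$; the topological hypothesis re-enters here to guarantee that local potentials for the limiting curl-free field patch consistently into a single global potential.
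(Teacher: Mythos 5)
The paper itself contains no proof of this proposition: it is quoted verbatim from \cite{wahl} (Theorem 3.2 there), so there is no in-paper argument to measure you against, and your Hodge-theoretic outline (kernel $=$ decaying harmonic Neumann fields, triviality of the kernel when $b_1(\Omega)=0$, Peetre--Tartar compactness, and a harmonic field as counterexample when $b_1(\Omega)>0$) is the right kind of strategy. However, as written it has genuine gaps. The decisive one is the step you yourself defer: in an exterior domain the contradiction argument can fail by energy escaping to infinity, and ``decay forced by $\nabla v\in L^2$'' does not rule this out. What actually rules it out is the exact whole-space identity $\|\nabla w\|_{L^2}^2=\|\mbox{div}\,w\|_{L^2}^2+\|\mbox{curl}\,w\|_{L^2}^2$ applied to a far-field cutoff $(1-\chi)v_k$, which shows the inequality holds with constant $1$ outside a large ball up to commutator terms supported in a fixed annulus; only then is any compactness defect confined to a bounded region where Rellich applies. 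Your sketch never identifies this mechanism, and without it the ``truncate and stitch'' plan is not a proof. Relatedly, before extracting local limits you must normalize the additive constants: one only has $v_k-c_k\in L^6$ with $\|v_k-c_k\|_{L^6}\leq C\|\nabla v_k\|_{L^2}$, and you need $c_k$ bounded (which follows from $v_k\cdot n|_{\partial\Omega}=0$, the trace bound of Lemma \ref{lem-ell}, and the fact that $\int_{\partial\Omega}|c\cdot n|^2\geq \lambda|c|^2$ on a closed surface) for the weak limit to be a genuine decaying Neumann field to which your uniqueness argument applies.

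Two further points. First, the statement is only correct in the functional setting where $v$ decays (e.g.\ $v$ in the completion of $C_c^\infty$, as used throughout the paper): without decay, take any constant vector $c\neq0$ and solve the exterior Neumann problem $\Delta\psi=0$, $\partial_n\psi=-c\cdot n$ on $\partial\Omega$ with $\psi\to0$; then $h=c+\nabla\psi$ satisfies $\mbox{div}\,h=\mbox{curl}\,h=0$, $h\cdot n|_{\partial\Omega}=0$, $\nabla h\in L^2$, $\nabla h\not\equiv0$, defeating \eqref{important} even when $b_1(\Omega)=0$. You implicitly assume decay in your kernel analysis (``appropriate decay'', ``$\phi$ tends to a constant''), but this hypothesis does real work and should be made explicit. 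Second, your ``only if'' direction is only an assertion: when $b_1(\Omega)>0$ you must actually construct a nontrivial decaying harmonic Neumann field with $\nabla h\in L^2$ (e.g.\ by prescribing a nonzero circulation on a nontrivial cycle and minimizing Dirichlet energy, or by correcting the Biot--Savart field of a loop threading the handle by an exterior Neumann potential); that construction is essentially the whole content of that implication. For comparison, von Wahl's argument proceeds through a div--curl representation/Hodge decomposition rather than a compactness contradiction, which is why the topological dichotomy appears there cleanly as the presence or absence of the harmonic remainder.
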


\begin{lem}(\cite{DL2024})\label{lem-ell}
	Assume that $\Omega$ is an exterior domain of the simply connected domain $O$ in $\mathbb{R}^3$
 with smooth boundary.
		For any $q\in[2,4]$ there exists some positive constant $C=C(q,O)$ such that for every $v\in\{D^{1,2}|v(x)\rightarrow 0\quad\mbox{as}\quad |x|\rightarrow\infty\}$, it holds
		\begin{equation}\label{est-boundary}
			|v|_{L^{q}(\partial\Omega)}\leq C\|\nabla v\|_{L^2(\Omega)}.
		\end{equation}
Moreover, for $k\geq1$, if $v\in \{D^{k+1,2}\cap D^{1,2}v(x)\rightarrow0, \text{as}\  |x|\rightarrow\infty\}$ with $v\cdot n|_{\partial\Omega}=0$ or $v\times n|_{\partial\Omega}=0$, then there exists some constant $C=C(k, O)$ such that
	\begin{align}
		\|\nabla v\|_{H^k}\leq C(\|\mbox{div}v \|_{H^k}+\|\mbox{curl}v \|_{H^k}+\|\nabla v\|_{L^2}).
	\end{align}
\end{lem}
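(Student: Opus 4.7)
The plan is to establish the two estimates separately, leveraging the Sobolev embedding for exterior domains and the already-recorded Proposition \ref{prop-imp} as the backbone.

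For the boundary trace inequality, the strategy is to reduce to a bounded neighborhood of $\partial\Omega$. Since $v\in D^{1,2}(\Omega)$ vanishes at infinity, the Sobolev embedding $D^{1,2}(\Omega)\hookrightarrow L^{6}(\Omega)$ gives $\|v\|_{L^{6}(\Omega)}\leq C\|\nabla v\|_{L^{2}(\Omega)}$. Fix $R$ so large that $\partial\Omega\subset B_{R}$ and set $\tilde\Omega=\Omega\cap B_{R}$; then H\"older's inequality controls $\|v\|_{L^{2}(\tilde\Omega)}$ by $\|v\|_{L^{6}(\Omega)}$, so that $\|v\|_{H^{1}(\tilde\Omega)}\leq C\|\nabla v\|_{L^{2}(\Omega)}$. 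The standard trace theorem on $\tilde\Omega$ yields $|v|_{H^{1/2}(\partial\Omega)}\leq C\|v\|_{H^{1}(\tilde\Omega)}$, and since $\partial\Omega$ is a compact two-dimensional manifold, the Sobolev embedding $H^{1/2}(\partial\Omega)\hookrightarrow L^{q}(\partial\Omega)$ holds precisely for $q\in[2,4]$, which completes the estimate.

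For the refined higher-order div-curl estimate, I would proceed by induction on $k$. The base case $k=0$ is exactly Proposition \ref{prop-imp}, which applies because the exterior of a simply connected bounded domain in $\mathbb{R}^{3}$ has vanishing first Betti number. For the inductive step from $k-1$ to $k$, tangential differentiation preserves the boundary conditions $v\cdot n=0$ or $v\times n=0$ up to lower-order terms involving derivatives of $n$ and the trace of $v$ on $\partial\Omega$; these are absorbed using the boundary estimate of part (1). Applying Lemma \ref{lem-div-curl} on the bounded region $\tilde\Omega$ with this choice of tangentially-differentiated $v$ provides the bound on all tangential components of $\nabla^{k+1}v$, with the $\|v\|_{L^{2}(\tilde\Omega)}$ remainder again dominated by $\|v\|_{L^{6}(\Omega)}\leq C\|\nabla v\|_{L^{2}(\Omega)}$. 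The remaining normal derivatives of $\nabla^{k}v$ are then recovered from the Hodge identity $-\Delta v=\nabla\,\mbox{div}\,v-\mbox{curl}\,\mbox{curl}\,v$, combined with the tangential control just obtained and the inductive hypothesis; the interior contribution away from $\partial\Omega$ is handled by a standard cutoff and interior elliptic regularity for the Laplacian.

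The main obstacle I foresee is the mismatch between the exterior-domain structure, in which $v$ need not belong to $L^{2}(\Omega)$, and the form of Lemma \ref{lem-div-curl}, which in bounded domains customarily contains an $L^{2}$ lower-order term. The remedy is precisely the localization near $\partial\Omega$ followed by absorption of that $L^{2}(\tilde\Omega)$ term via the $L^{6}$ Sobolev embedding, together with a separate interior elliptic argument in the far field. Some care will be required when transferring the boundary conditions $v\cdot n=0$ or $v\times n=0$ to higher tangential derivatives, since commutator terms involving the second fundamental form of $\partial\Omega$ arise and must be absorbed through the trace estimate already proved in part (1).
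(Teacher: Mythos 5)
The paper itself offers no proof of this lemma: it is quoted directly from \cite{DL2024}, so there is no in-paper argument to compare against and your proposal is an independent reconstruction. For the trace estimate your route is correct and standard: the exterior-domain embedding $\|v\|_{L^{6}(\Omega)}\leq C\|\nabla v\|_{L^{2}(\Omega)}$, H\"older on $\Omega\cap B_{R}$, the trace theorem, and the two-dimensional embedding $H^{1/2}(\partial\Omega)\hookrightarrow L^{q}(\partial\Omega)$, which indeed covers exactly $q\in[2,4]$. For the higher-order div-curl bound, the localization-plus-induction scheme is sound: the $\|v\|_{L^{2}}$ lower-order term in Lemma \ref{lem-div-curl} is precisely the obstruction in an exterior domain, and absorbing it through $\|v\|_{L^{2}(\Omega\cap B_{R})}\leq C\|v\|_{L^{6}(\Omega)}\leq C\|\nabla v\|_{L^{2}(\Omega)}$, with a separate cutoff/interior-elliptic argument in the far field, is the right mechanism.

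Two details need tightening. First, Proposition \ref{prop-imp} is stated only for $v\cdot n|_{\partial\Omega}=0$; for $v\times n|_{\partial\Omega}=0$ the corresponding Von Wahl estimate requires a different topological hypothesis (vanishing second Betti number), so it cannot serve as your base case in that situation --- fortunately it need not, since the right-hand side of the claimed inequality already contains $\|\nabla v\|_{L^{2}}$, which makes the $k=0$ step trivial. Second, after tangential differentiation the boundary data become inhomogeneous, e.g.\ $\partial_{\tau}v\cdot n=-v\cdot\partial_{\tau}n$, and to feed them into Lemma \ref{lem-div-curl} they must be measured in $H^{s-\frac{1}{2}}(\partial\Omega)$; the $L^{q}(\partial\Omega)$ bound of part (1) is too weak for that, and you should instead invoke the trace theorem on $\Omega\cap B_{R}$ together with the induction hypothesis. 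A cleaner variant avoids tangential differentiation entirely: apply Lemma \ref{lem-div-curl} with $s=k+1$ to $\chi v$, where $\chi$ equals one near $\partial\Omega$ and vanishes near $\partial B_{R}$, so that $\chi v\cdot n$ (or $\chi v\times n$) vanishes on all of $\partial(\Omega\cap B_{R})$; the commutator terms generated by $\nabla\chi$ are lower order and handled by induction, and the far field is treated exactly by your interior argument.
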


Next, we present some  elliptic estimates for Lam\'{e} equations subject to  Navier-slip  boundary conditions  in the smooth exterior domain $\Omega$ by using Lemmas \ref{lem-div-curl}-\ref{lem-ell}, which will be frequently used in the proofs of the main results later.

\begin{lem}\label{lem-elliptic-1} \cite{DL2024}
Assume that  $\Omega$ is the exterior domain of a bounded  domain in $\mathbb{R}^3$ with smooth boundary.
Suppose that $u$ satisfy the Lam\'{e} equation:
	\begin{equation*}
		\left\{
		\begin{array}{llll}
			-\mu\Delta u-(\mu+\lambda)\nabla\mbox{div}u=f\quad \mbox{in}\quad \Omega,\\
			u\cdot n=0,\quad \mbox{curl} u\times n=-K(x)u\quad  \mbox{on}\quad \partial\Omega,\\
u(x)\rightarrow 0, \quad \mbox{as}\ |x|\rightarrow+\infty,
		\end{array}
		\right.
	\end{equation*}
where $n$ stands for the outward unit normal to $\partial\Omega$, and  $K(x)$ is a $3\times 3$ symmetric bounded matrix.
There exists a positive constant $C$ depending only on $\mu,\lambda,\Omega$ and the matrix $K$ such that it holds
\begin{equation*}
	\|\nabla^2u\|^2_{H^{s}}\leq C\big(\|f\|^2_{H^{s}}+\| \nabla u\|^2_{L^{2}}\big),
\end{equation*}
where $s=0,1$.
 \end{lem}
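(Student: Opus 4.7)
The plan is to rewrite the Lam\'e operator in divergence-curl form
\[
Lu=\mu\,\mbox{curl}\,\mbox{curl}\,u-(2\mu+\lambda)\nabla\mbox{div}\,u=f,
\]
and reduce the estimate of $\nabla^{2}u$ to estimates of $\mbox{div}\,u$ and $\mbox{curl}\,u$. Because $u$ decays at infinity and $u\cdot n=0$ on $\partial\Omega$, the div-curl inequality of Lemma~\ref{lem-ell} (with $k=s+1$) applies directly to $u$ and yields
\[
\|\nabla u\|_{H^{s+1}}\le C\bigl(\|\mbox{div}\,u\|_{H^{s+1}}+\|\mbox{curl}\,u\|_{H^{s+1}}+\|\nabla u\|_{L^{2}}\bigr).
\]
Since $\|\nabla^{2}u\|_{H^{s}}\le\|\nabla u\|_{H^{s+1}}$, what remains is to bound $\|\mbox{div}\,u\|_{H^{s+1}}$ and $\|\mbox{curl}\,u\|_{H^{s+1}}$ by $\|f\|_{H^{s}}+\|\nabla u\|_{L^{2}}$.

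For $s=0$ I would proceed in two substeps. First, pair $Lu=f$ with $u$ and integrate by parts: the boundary term coming from $\nabla\mbox{div}\,u$ vanishes because $u\cdot n=0$, while the one from $\mbox{curl}\,\mbox{curl}\,u$ produces $-\int_{\partial\Omega}Ku\cdot u\,dS$, which is non-positive since $K$ is positive semi-definite; combined with the trace inequality \eqref{est-boundary} and Sobolev embedding this yields $\|\mbox{curl}\,u\|_{L^{2}}^{2}+\|\mbox{div}\,u\|_{L^{2}}^{2}\le C(\|f\|_{L^{2}}^{2}+\|\nabla u\|_{L^{2}}^{2})$. Second, take the curl of the equation to obtain the vector Poisson problem $-\mu\Delta(\mbox{curl}\,u)=\mbox{curl}\,f$ carrying the trace $\mbox{curl}\,u\times n=-Ku$, and apply the standard $H^{1}$ elliptic estimate together with Lemma~\ref{lem-ell} to control $\|\nabla\mbox{curl}\,u\|_{L^{2}}^{2}$ by $\|f\|_{L^{2}}^{2}+\|\nabla u\|_{L^{2}}^{2}$. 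The matching estimate for $\nabla\mbox{div}\,u$ is then read off the algebraic identity $(2\mu+\lambda)\nabla\mbox{div}\,u=\mu\,\mbox{curl}\,\mbox{curl}\,u-f$ without solving any new boundary value problem. The $s=1$ case follows by the same scheme with one more derivative: differentiate the scalar Poisson problem $-(2\mu+\lambda)\Delta\mbox{div}\,u=\mbox{div}\,f$ and the vector Poisson problem for $\mbox{curl}\,u$ once more, and use the $H^{2}$ bound on $u$ already secured from the $s=0$ step to control the higher-order boundary trace of $Ku$.

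The main obstacle is that the Navier-slip boundary data $(u\cdot n,\,\mbox{curl}\,u\times n)$ do \emph{not} prescribe $\mbox{div}\,u$ on $\partial\Omega$, so there is no natural stand-alone boundary-value problem for the divergence. The plan circumvents this by treating $\mbox{div}\,u$ only through the algebraic identity above, trading each derivative of $\mbox{div}\,u$ for one of $\mbox{curl}\,u$ plus a derivative of $f$. The delicate point in carrying this out is ensuring that all boundary contributions of the form $|Ku|_{H^{s+1/2}(\partial\Omega)}$, generated at each integration by parts in the elliptic estimates for $\mbox{curl}\,u$, are dominated by $\|\nabla u\|_{L^{2}}$ and $\|f\|_{H^{s}}$ through the exterior-domain trace inequality \eqref{est-boundary} and interpolation. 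With that in hand, all cross terms can be absorbed into the left-hand side and the stated inequality follows.
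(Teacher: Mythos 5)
The paper itself offers no proof of Lemma~\ref{lem-elliptic-1}: it is imported verbatim from \cite{DL2024}, so your attempt can only be judged on its own terms, and as written it has a genuine gap. The skeleton is the right one (reduce $\|\nabla^2u\|_{H^s}$ via Lemma~\ref{lem-ell} to $\|\mbox{div}\,u\|_{H^{s+1}}$ and $\|\mbox{curl}\,u\|_{H^{s+1}}$, and treat the divergence through the identity $(2\mu+\lambda)\nabla\mbox{div}\,u=\mu\,\mbox{curl}\,\mbox{curl}\,u-f$), but the load-bearing step is the one you dispatch with the phrase ``standard $H^{1}$ elliptic estimate'' for $-\mu\Delta(\mbox{curl}\,u)=\mbox{curl}\,f$ with trace $\mbox{curl}\,u\times n=-Ku$. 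This is not a Dirichlet problem (only the tangential part of $w=\mbox{curl}\,u$ is prescribed), and for $s=0$ the source $\mbox{curl}\,f$ lies only in $H^{-1}$, so there is no off-the-shelf estimate to quote. If you instead try to realize it through Lemma~\ref{lem-div-curl} or Lemma~\ref{lem-ell} applied to $w$, you must bound $\|\mbox{curl}\,w\|_{H^{s}}=\|\mbox{curl}\,\mbox{curl}\,u\|_{H^{s}}$, which by the equation is comparable to $\|f\|_{H^{s}}+\|\nabla\mbox{div}\,u\|_{H^{s}}$ --- precisely the quantity you plan to recover afterwards from the algebraic identity. So the two halves of your argument bound each other and neither is bounded by the data; the energy pairing with $u$ in your first substep only produces first-order quantities already dominated by $\|\nabla u\|_{L^2}$ and cannot break this loop, and the $s=1$ step inherits the same circularity one order higher (differentiating the ``Poisson problem'' for $\mbox{div}\,u$ is useless near the boundary since, as you note, no boundary condition for $\mbox{div}\,u$ is available).

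What is missing is one concrete PDE input that decouples the curl from the divergence. For $s=0$ the standard device is variational: write $\mu\,\mbox{curl}\,\mbox{curl}\,w=\mbox{curl}\,f$, pick an $H^{1}$ extension $\zeta$ of the boundary datum with $\zeta\times n=-Ku$ on $\partial\Omega$ and $\|\zeta\|_{H^{1}}\leq C\,|Ku|_{H^{1/2}(\partial\Omega)}\leq C\|\nabla u\|_{L^{2}}$ (using \eqref{est-boundary} and the exterior-domain Sobolev embedding), and test against $w-\zeta$; since this test field has vanishing tangential trace, the boundary integrals carrying $f\times n$ and $\mbox{curl}\,\mbox{curl}\,u\times n$ drop out, giving $\|\mbox{curl}\,\mbox{curl}\,u\|_{L^{2}}\leq C\big(\|f\|_{L^{2}}+\|\nabla u\|_{L^{2}}\big)$ independently of $\nabla\mbox{div}\,u$; only then do your identity, Lemma~\ref{lem-div-curl} applied to $w$ (divergence-free, tangential trace $-Ku$), and Lemma~\ref{lem-ell} applied to $u$ close the $s=0$ case. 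For $s=1$ one further needs, e.g., the observation used in Step 2 of the proof of Lemma~\ref{lem-u-2}: the normal component $\mbox{curl}\,\mbox{curl}\,u\cdot n$ on $\partial\Omega$ is determined by tangential derivatives of $Ku$, so that $h=\mbox{curl}\,\mbox{curl}\,u$, which is divergence-free with $\mbox{curl}\,h=\mu^{-1}\mbox{curl}\,f\in L^{2}$, can be estimated in $H^{1}$ by the first inequality of Lemma~\ref{lem-div-curl}, after which the chain you describe goes through. Without such an ingredient the proposal does not close.
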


Next, we provide the elliptic estimate in the boundary conditions with some unknown variables.
\begin{lem}\label{Appendix}
Assume $\Omega$ is the same as in Lemma \ref{lem-elliptic-1},
	suppose that $u$ satisfy the Lam\'{e} equation:
	\begin{equation}\label{equ-lemma}
		\left\{
		\begin{array}{llll}
			-\mu\Delta(\xi u)-(\mu+\lambda)\nabla\mbox{div}(\xi u)=f\quad \mbox{in}\quad \Omega,\\
		\xi u\cdot n=0,\quad \mbox{curl} (\xi u)\times n=(\nabla\xi \cdot n)u-K(x)\xi u\quad \mbox{on}\quad \partial\Omega,\\
\xi u(x)\rightarrow 0, \quad \mbox{as}\ |x|\rightarrow+\infty.
		\end{array}
		\right.
	\end{equation}
	There exists a positive constant $C$ depending only on $\mu,\lambda,\Omega$ and the matrix $K$ such that it holds
	\begin{equation*}
		\|\nabla^2(\xi u)\|^2_{H^{s}}\leq C\left(\|f\|^2_{H^{s}}+\| \xi\nabla u\|_{L^2}^2\right)
		 +C\left(\|\nabla^2\xi\|^2_{H^1}+\|\nabla\xi\|^2_{L^\infty}\right)\|u\|^2_{H^{s+1}},	
	\end{equation*}
	where $s=0,1$.
\end{lem}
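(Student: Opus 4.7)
The plan is to reduce Lemma \ref{Appendix} to Lemma \ref{lem-elliptic-1} by homogenizing the inhomogeneous boundary condition through a suitable lifting. Setting $w = \xi u$, I observe that $w$ satisfies exactly the Lam\'{e} system of Lemma \ref{lem-elliptic-1}; only the curl boundary condition differs by the extra term $(\nabla\xi\cdot n)u$, which is automatically tangential on $\partial\Omega$ (since $u\cdot n=0$ there).

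First, I would construct an auxiliary vector field $\Phi\in H^{s+2}(\Omega)$, vanishing at infinity, carrying the extra boundary data: $\Phi\cdot n=0$ and $\mbox{curl}\,\Phi\times n+K\Phi=(\nabla\xi\cdot n)u$ on $\partial\Omega$. After straightening the boundary, such a $\Phi$ can be realized near $\partial\Omega$ in the form $d(x)\eta(d(x))\widetilde{h}(x)$, where $d$ is the signed distance function, $\eta$ a smooth cutoff, and $\widetilde h$ an $H^{s+1}$-extension of the boundary data $(\nabla\xi\cdot n)u$ into $\Omega$. Standard trace and Sobolev extension estimates, combined with the product estimates
\[
\|\nabla\xi\cdot u\|_{H^{s+1}}\leq C\bigl(\|\nabla\xi\|_{L^\infty}+\|\nabla^2\xi\|_{H^1}\bigr)\|u\|_{H^{s+1}},
\]
would then give $\|\Phi\|_{H^{s+2}(\Omega)}\leq C(\|\nabla\xi\|_{L^\infty}+\|\nabla^2\xi\|_{H^1})\|u\|_{H^{s+1}}$.

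Second, I would set $v=\xi u-\Phi$. By construction, $v\cdot n=0$ and $\mbox{curl}\,v\times n=-K v$ on $\partial\Omega$, while
\[
-\mu\Delta v-(\mu+\lambda)\nabla\mbox{div}\,v=\widetilde f:=f+\mu\Delta\Phi+(\mu+\lambda)\nabla\mbox{div}\,\Phi,
\]
and $v\to 0$ at infinity. Applying Lemma \ref{lem-elliptic-1} directly to $v$ yields
\[
\|\nabla^2 v\|_{H^s}^{2}\leq C\bigl(\|\widetilde f\|_{H^s}^{2}+\|\nabla v\|_{L^2}^{2}\bigr).
\]
Using the triangle inequality $\|\nabla^2(\xi u)\|_{H^s}\leq \|\nabla^2 v\|_{H^s}+\|\nabla^2\Phi\|_{H^s}$, together with $\|\widetilde f\|_{H^s}\leq \|f\|_{H^s}+C\|\Phi\|_{H^{s+2}}$ and
\[
\|\nabla v\|_{L^2}\leq \|\xi\nabla u\|_{L^2}+\|\nabla\xi\|_{L^\infty}\|u\|_{L^2}+\|\Phi\|_{H^{s+2}},
\]
and substituting the lifting estimate from Step 1, I would arrive at the claimed bound.

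The main obstacle is the construction of $\Phi$ with the correct $H^{s+2}$ regularity; in particular, for $s=1$ one must produce $\Phi\in H^3$ whose norm is controlled by $(\|\nabla\xi\|_{L^\infty}+\|\nabla^2\xi\|_{H^1})\|u\|_{H^{2}}$, which forces careful trace-theoretic and Sobolev product arguments on the boundary datum $(\nabla\xi\cdot n)u$. A secondary subtlety is the self-referential presence of $K\Phi$ in the prescribed boundary condition for $\Phi$; this is handled either by treating $K\Phi$ as a lower-order perturbation (absorbed by smallness/fixed-point) or by reformulating so that $K\Phi$ is moved to the source side. Once $\Phi$ is in hand, the remainder of the argument is a direct application of Lemma \ref{lem-elliptic-1} and bookkeeping.
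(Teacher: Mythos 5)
Your overall strategy — homogenize the extra boundary term by a lifting and then apply Lemma \ref{lem-elliptic-1} to $v=\xi u-\Phi$ — is viable, and the trace arithmetic does in principle close: since $u\cdot n=0$ the datum $(\nabla\xi\cdot n)u$ is tangential, its boundary trace lies in $H^{s+1/2}(\partial\Omega)$ with
\[
|(\nabla\xi\cdot n)u|_{H^{s+1/2}(\partial\Omega)}\le C\|(\nabla\xi\cdot N)u\|_{H^{s+1}}\le C\bigl(\|\nabla\xi\|_{L^\infty}+\|\nabla^2\xi\|_{H^1}\bigr)\|u\|_{H^{s+1}},
\]
and prescribing $\Phi|_{\partial\Omega}=0$ also kills the $K\Phi$ self-reference you worry about, because then $\operatorname{curl}\Phi\times n=(\partial_n\Phi)_\tau$ on $\partial\Omega$ and the boundary condition for $v$ becomes exactly the homogeneous Navier-slip condition. (For the record, the paper states this lemma without a written proof; within its own toolkit the shorter route is to run the div–curl estimates of Lemmas \ref{lem-div-curl} and \ref{lem-ell} directly on $\xi u$, where the inhomogeneous term enters only through the trace norm $|\operatorname{curl}(\xi u)\times n|_{H^{s+1/2}(\partial\Omega)}$, so no lifting is needed at all.)

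The genuine gap is in your construction of $\Phi$. You take $\Phi=d(x)\eta(d)\widetilde h$ with $\widetilde h$ merely an $H^{s+1}$ extension of $(\nabla\xi\cdot n)u$ and claim $\|\Phi\|_{H^{s+2}}\le C(\|\nabla\xi\|_{L^\infty}+\|\nabla^2\xi\|_{H^1})\|u\|_{H^{s+1}}$. Multiplication by the fixed smooth function $d\,\eta(d)$ does not raise Sobolev regularity: $\nabla^{s+2}\Phi$ contains $d\,\eta(d)\nabla^{s+2}\widetilde h$, which is not in $L^2$ when $\widetilde h\in H^{s+1}$ only, so this $\Phi$ is generically only $H^{s+1}$, $\Delta\Phi\notin H^{s}$, and the modified source $\widetilde f=f+\mu\Delta\Phi+(\mu+\lambda)\nabla\operatorname{div}\Phi$ is not controlled — the application of Lemma \ref{lem-elliptic-1} to $v$ breaks down at exactly the step that carries all the weight. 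Note also that the naive fix "take $\widetilde h\in H^{s+2}$" is not available: for $s=1$ an $H^{3}$ bound on $(\nabla\xi\cdot N)u$ would require $\nabla^4\xi\,u$ and $\nabla\xi\,\nabla^3u$, neither of which is controlled by the right-hand side of the lemma. The correct repair is to lift only the boundary trace: since the datum is in $H^{s+1/2}(\partial\Omega)$, the surjectivity of the two-trace map $\Phi\mapsto(\Phi|_{\partial\Omega},\partial_n\Phi|_{\partial\Omega})$ from $H^{s+2}(\Omega)$ onto $H^{s+3/2}\times H^{s+1/2}(\partial\Omega)$ yields a compactly supported $\Phi\in H^{s+2}$ with $\Phi|_{\partial\Omega}=0$, $\partial_n\Phi|_{\partial\Omega}=(\nabla\xi\cdot n)u$ and $\|\Phi\|_{H^{s+2}}\le C|(\nabla\xi\cdot n)u|_{H^{s+1/2}(\partial\Omega)}$; equivalently, in your collar formula one must replace the crude extension $\widetilde h$ by a $z$-dependent smoothing (Poisson-type) extension so that the vanishing factor actually buys the extra derivative. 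With that lifting your Step 2 bookkeeping goes through and gives the stated estimate.
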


At last in the section, we recall the elliptic estimate for the Dirichlet boundary.
\begin{lem}\label{lem6-0-1}(\cite{M})
	Let $\Omega$ be any exterior domain or the half space. Suppose that $u$ satisfy the Lam$\acute{e}$ equation:
\begin{equation*}
\left\{
\begin{array}{lll}
-\mu\Delta u-(\mu+\lambda)\nabla\mbox{div} u=f,\\
u|_{\partial\Omega}=0,\\
 u(x)\rightarrow 0\quad \mbox{as}\ |x|\rightarrow+\infty.
\end{array}
\right.
\end{equation*}
	 Then for $s=0,1$,
	\begin{equation*}
			\|\nabla^{2}u\|_{H^s}^2\leq C(\|f\|^2_{H^{s}}+\|\nabla u\|_{L^2}^2).
	\end{equation*}
	\end{lem}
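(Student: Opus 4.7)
The plan is to establish this classical interior-plus-boundary regularity estimate for the Lam\'{e} system with Dirichlet data by a localization-plus-flattening argument that reduces matters to the constant-coefficient half-space problem. I would cover $\Omega$ by a locally finite family of balls, distinguishing interior balls $B_i \subset\subset \Omega$ from boundary balls $B_j$ centered on $\partial\Omega$, with a subordinate smooth partition of unity $\{\chi_k\}$; the global bound is obtained by summing local estimates for the pieces $\chi_k u$ and controlling the resulting commutators.

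For an interior piece $v=\chi_i u$, zero extension outside $B_i$ produces a function on $\mathbb{R}^{3}$ satisfying
\[
-\mu\Delta v-(\mu+\lambda)\nabla\mbox{div} v=\chi_i f+[L,\chi_i]u,
\]
where $[L,\chi_i]u$ is a linear differential expression in $u$ of order at most one. Whole-space Fourier-multiplier estimates for the strongly elliptic Lam\'{e} operator give $\|\nabla^{2}v\|_{H^{s}}^{2}\lesssim \|\chi_i f\|_{H^{s}}^{2}+\|[L,\chi_i]u\|_{H^{s}}^{2}$, and the commutator is bounded by $\|\nabla u\|_{H^{s}}$ supported on a slightly larger ball. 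For a boundary piece I would flatten $\partial\Omega$ near the center of $B_j$ by a smooth diffeomorphism $\Phi$, converting the problem into one on $\mathbb{R}^{3}_{+}$ with Dirichlet data and a variable-coefficient strongly elliptic system whose principal part can be frozen at the origin; the lower-order and variable-coefficient remainders are absorbed into the source.

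For the resulting constant-coefficient Dirichlet problem on $\mathbb{R}^{3}_{+}$, I apply the tangential Fourier transform in $(x_1,x_2)$, reducing the system to a linear ODE in $x_3$ whose unique decaying solution furnishes an explicit multiplier and hence the base estimate $\|\nabla^{2}u\|_{L^{2}(\mathbb{R}^{3}_{+})}\lesssim \|f\|_{L^{2}(\mathbb{R}^{3}_{+})}$. For the case $s=1$, tangential second derivatives $\partial_{a}\partial_{b}u$ with $a,b\in\{1,2\}$ are controlled by the Nirenberg difference-quotient method in the tangential variables (which preserves the Dirichlet trace), while the normal second derivative $\partial_{3}^{2}u$ is recovered algebraically from the equation since the coefficient matrix multiplying $\partial_{3}^{2}u$ is invertible under strong ellipticity.

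Assembling the local estimates through the partition of unity yields the desired bound modulo a remainder of the form $C(\|\nabla u\|_{L^{2}}^{2}+\|u\|_{L^{2}(K)}^{2})$, where $K$ is compactly contained in $\overline{\Omega}$; the $L^{2}$-term on the compact set is absorbed using the vanishing Dirichlet trace together with the far-field decay $u(x)\to 0$, either via Poincar\'{e}'s inequality on a large ball or via the Gagliardo--Nirenberg bound of Lemma \ref{lem-2-1} to dominate $\|u\|_{L^{6}}$ by $\|\nabla u\|_{L^{2}}$. The main obstacle is the commutator $[L,\chi]u$ appearing at the $H^{1}$ step, since it contains $\nabla^{2}u$ and formally threatens to feed back into the left-hand side; the standard cure is a nested cutoff iteration (or equivalently the difference-quotient argument applied to $\chi^{2}u$) that allows the top-order commutator contributions to be absorbed into the left-hand side, leaving only $\|\nabla u\|_{L^{2}}$ as the final remainder.
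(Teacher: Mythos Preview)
The paper does not prove this lemma at all: it is stated with a citation to Matsumura--Nishida \cite{M} and used as a black-box elliptic estimate. There is therefore no ``paper's own proof'' to compare against.

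Your outline is the standard localization--flattening route to Dirichlet regularity for a constant-coefficient strongly elliptic system, and it is essentially correct. One small point: the ``main obstacle'' you flag at the $H^{1}$ step is milder than you suggest. The commutator $[L,\chi]u$ is a first-order differential expression in $u$, so $\|[L,\chi]u\|_{H^{1}}$ is controlled by $\|u\|_{H^{2}}$ on the support of $\nabla\chi$; this is already available from the $s=0$ case and does not feed back into the $\|\nabla^{3}u\|_{L^{2}}$ term you are trying to bound. The nested-cutoff or difference-quotient device you invoke works, but a simple bootstrap from $s=0$ to $s=1$ already suffices here.
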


\section{Existence of Regular Solutions}\label{section3}
In this section, we will give the proof of Theorem \ref{thm1} by subsections \ref {sub3.1}-\ref{sub3.6}.  For this purpose, we first reformulate the original  initial-boundary value problem (\ref{prob1})-(\ref{1.8})
in terms of some new variables, and then establish the local well-posedness of the strong solution to the reformulation problem (\ref{ibvp-sect3}).
\vspace{1mm}

\subsection{Reformulation}\label{sub3.1}

 In this subsection, we introduce some new variables as
 \begin{align*}
 \phi=\frac{A\gamma}{\gamma-1}\rho^{\gamma-1},\quad \psi=\frac{\delta}{\delta-1}\nabla\rho^{\delta-1},
 \end{align*}
and set  some constants as
\begin{align*}
a=\left(\frac{A\gamma}{\gamma-1}\right)^{\frac{1-\delta}{\gamma-1}},\quad \kappa=\frac{\delta-1}{2(\gamma-1)}<0.
\end{align*}
Then  the original initial-boundary value problem (\ref{prob1})-(\ref{1.8})  can be  reformulated
as
\begin{equation}\label{ibvp-sect3}
\left\{
\begin{array}{llll}
\phi_t+u\cdot\nabla\phi+(\gamma-1)\phi\mbox{div}u=0,\\
u_{t}+u\cdot\nabla u+\nabla\phi +a\phi^{2\kappa}Lu=\psi\cdot Q(u),\\
\psi_t+\nabla(u\cdot\psi)+(\delta-1)\psi\mbox{div}u+\delta a \phi^{2\kappa}\nabla \mbox{div}u=0,\\
(\phi,u,\psi)|_{t=0}=(\phi_0,u_0,\psi_0)\equiv\left(\frac{A\gamma}{\gamma-1}\rho_0^{\gamma-1},u_0,
 \frac{\delta}{\delta-1}\nabla\rho_0^{\delta-1}\right),\\
 u\cdot n=0,\quad   \mbox{curl}u\times n=-K(x)u,\quad \mbox{on}\quad \partial\Omega,\\
(\phi, u,\psi) \rightarrow (0, 0, 0), \ \mbox{as}\  |x|\rightarrow+\infty,\  t\geq 0,
\end{array}
\right.
\end{equation}
where
\begin{align*}
Lu=-\alpha \Delta u-(\alpha+\beta)\nabla\mbox{div}u,\quad Q(u)=\alpha\left(\nabla u+(\nabla u)^{T}\right)+\beta \mbox{div}u\mathbb{I}_3.
\end{align*}

	To prove Theorem \ref{thm1}, we turn to  establish the local well-posedness to the reformulated problem (\ref{ibvp-sect3}).  More precisely,

\begin{thm}\label{thm2}
Let parameters $(\gamma,\delta,\alpha,\beta)$ satisfy
\begin{equation*}
	\gamma>1,\quad 0<\delta<1,\quad \alpha>0,\quad 2\alpha+3\beta\geq0.
	\end{equation*}
If the initial data $(\phi_0,u_0,\psi_0)$ satisfy the following  conditions:
 \begin{equation}\label{intial2}
		\phi_0>0,\quad (\phi_0,u_0)\in H^3,\quad \psi_0\in D^1\cap D^2,\quad \nabla\phi_0^{\kappa}\in L^4,
	\end{equation}
	and the initial compatibility conditions:
	\begin{align}\label{g}
 g_1=\phi_0^{2\kappa}\nabla u_0,\quad  g_2=a\phi_0^{2\kappa}Lu_0,\quad
g_3=\phi_0^{2\kappa}\nabla (a\phi_0^{2\kappa}Lu_0),
\end{align}
for some $(g_1,g_2,g_3)\in L^2$. Moreover, we assume that the initial data satisfy  the compatibility condition  with the boundary conditions. Then there exist a time $T_{*}>0 $ and a unique  local strong solution $(\phi,u,\psi=\frac{a\delta}{\delta-1}\nabla\phi^{2\kappa})$ to the initial-boundary value problem (\ref{ibvp-sect3}) satisfying
\begin{align}\label{thm2-reg}
&\phi\in C([0,T_{*}];H^3), \quad \phi_t\in C([0,T_{*}];H^2), \quad
\phi^{-1}\nabla\phi\in L^\infty([0,T_{*}];L^\infty\cap L^6\cap D^{1,3}\cap D^2), \nonumber\\[2mm]
&\psi\in C([0,T_{*}];D^1 \cap D^2),  \quad \psi_t\in C([0,T_{*}];H^1), \quad \psi_{tt}\in L^2([0,T_{*}];L^2), \nonumber\\[2mm]
&\phi^{-2\kappa}\in L^\infty([0,T_{*}];L^\infty\cap D^{1,6}\cap D^{2,3}\cap D^3), \nonumber\\[2mm]
 &u\in C([0,T_{*}];H^3)\cap L^2([0,T_{*}]; H^4),\quad \phi^{2\kappa}\nabla u\in L^\infty([0,T_{*}];H^2), \nonumber\\[2mm]
& \phi^{2\kappa}\nabla^2 u\in L^2([0,T_{*}];D^2),\quad
 (\phi^{2\kappa}\nabla^2 u)_t\in L^2([0,T_{*}];L^2), \nonumber\\[2mm]
 & u_t\in C([0,T_{*}];H^1)\cap L^2([0,T_{*}]; H^2), \quad \phi^{\kappa}u_{tt}\in L^2([0,T_{*}];L^2).
 \end{align}
\end{thm}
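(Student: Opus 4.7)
The plan is to prove Theorem~\ref{thm2} by linearization, iteration and two successive limits: first removing the lower-bound assumption $\phi_0 \ge \sigma > 0$, then establishing existence and uniqueness for the reformulated nonlinear system. Writing $\xi = \phi^{2\kappa}$, the key point of the reformulation is that $\xi$ carries the singular weight in front of $Lu$ while $\phi$ and $\psi = \frac{a\delta}{\delta-1}\nabla\xi$ carry the information about the density. Following the structural remarks in $\S$\ref{sub3.2}, I would freeze $(v,g)$ in the transport fields and treat $\xi$ as an \emph{independent} unknown from $\phi$ in the linearized step: solve
\begin{equation*}
\phi_t + v\cdot\nabla\phi + (\gamma-1)g\,\mbox{div}\,v = 0,\qquad
\xi_t + v\cdot\nabla\xi + (\delta-1)g\,\mbox{div}\,v = 0,
\end{equation*}
define $\psi := \frac{a\delta}{\delta-1}\nabla\xi$ (so it satisfies a first-order symmetric hyperbolic equation with source $a\delta(g\nabla\mbox{div}\,v + \nabla g\,\mbox{div}\,v)$), and then solve the linear degenerate parabolic equation
\begin{equation*}
u_t + v\cdot\nabla u + \nabla\phi + a\xi Lu = \psi\cdot Q(u),\qquad u\cdot n = 0,\quad \mbox{curl}\,u\times n = -Ku.
\end{equation*}
For data with a strictly positive lower bound $\phi_0 \ge \sigma > 0$ propagated by the transport equation, this linear system is uniformly parabolic, and classical theory (with Navier-slip boundary conditions handled via Lemma~\ref{lem-elliptic-1}) produces a unique approximate solution $(\phi^\sigma,\xi^\sigma,u^\sigma)$ on any time interval.

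The heart of the argument is to obtain a priori bounds for $(\phi^\sigma,\xi^\sigma,u^\sigma,\psi^\sigma)$ on a uniform time $T_* > 0$ \emph{independent of $\sigma$} and of the iteration index. I would proceed by a hierarchy of energy estimates. For $\phi$ and $\xi$ standard characteristic/transport estimates give $C([0,T];H^3)$ bounds controlled by $\int_0^t\|\nabla v\|_{L^\infty}$. For $\psi$, differentiating $\nabla\xi$ and commuting derivatives through the transport structure yields $L^\infty H^2$ bounds provided the forcing term $g\nabla\mbox{div}\,v$ is controlled. For $u$ the key is the \emph{singular} weighted estimates: multiplying the momentum equation by $u$, $u_t$, $\xi u$ and progressively higher time/space derivatives gives the natural $\sqrt{\xi}\nabla u$ and $\phi^\kappa u_t$ bounds, but to absorb the boundary remainder arising when elliptic estimates are applied to $L(\xi u)$ (Lemma~\ref{Appendix}), I will need the stronger estimate on $\|\xi\nabla u\|_{L^2}$ itself. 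This is obtained by testing the equation for $\xi u$ against $\xi u$, using Proposition~\ref{prop-imp} to recover $\|\nabla(\xi u)\|_{L^2}$ from its divergence and curl (the Navier-slip condition gives $\mbox{curl}(\xi u)\times n = (\nabla\xi\cdot n)u - K\xi u$), and exploiting the compatibility conditions (\ref{g}) on the initial data.

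The main obstacle, as highlighted in the introduction, will be the highest-order weighted estimate $\|\xi\nabla^2 u\|_{D^2}$. A direct elliptic bootstrap on $L(\xi u)$ fails because $\nabla^4\xi$ is not controlled, and applying elliptic estimates to $L(\xi\nabla^2 u)$ fails because $\nabla^2 u$ has no boundary conditions. My plan here is the conormal-decomposition approach announced in Lemma~\ref{lem-u-2}: decompose $\nabla u$ near $\partial\Omega$ into a tangential derivative, a normal component of the normal derivative, and a tangential component of the normal derivative. The tangential derivative preserves the Navier-slip structure, so elliptic estimates on $L(\xi\partial_\tau u)$ with modified boundary conditions are available; the normal component of the normal derivative is recovered algebraically from $\mbox{div}\,u$ (which the equation controls through $\xi\nabla\mbox{div}\,u$); and the tangential component of the normal derivative is recovered from $\mbox{curl}\,u$ together with the boundary identity. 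Combining the three gives $\int_0^t \|\xi\nabla u\|_{D^3}^2$ and hence $\int_0^t\|\xi\nabla^2 u\|_{D^2}^2$ with constants depending only on the a priori norms already controlled.

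Once the uniform-in-$\sigma$ bounds on a short time $T_*$ are closed by a standard Gronwall/continuation argument, I would set up a Picard-type iteration $(\phi^{k+1},\xi^{k+1},u^{k+1})$ driven by $(v,g) = (u^k,\phi^k)$; uniqueness and contraction at the low-regularity level ($L^\infty L^2 \cap L^2 H^1$ for the difference) follow from reusing the same estimates on the linearized differences, giving a unique solution $(\phi^\sigma,u^\sigma,\psi^\sigma)$ of the nonlinear reformulated problem with $\phi_0 \ge \sigma$, on a uniform time interval $[0,T_*]$. Finally, I would regularize the given initial datum by $\phi_0^\sigma = \phi_0 + \sigma$, extract a weak-* limit by Aubin-Lions (Lemma~\ref{lem-AL}) using the uniform bounds together with the time derivative estimates $\phi_t, u_t, \psi_t$, and verify that the limit satisfies the regularity listed in (\ref{thm2-reg}), with the identification $\psi = \frac{a\delta}{\delta-1}\nabla\phi^{2\kappa}$ preserved in the limit. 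Uniqueness at the nonlinear level (possibly with vacuum at infinity) is again a direct consequence of the same weighted estimates applied to the difference of two solutions. The assumption $\nabla\phi_0^\kappa \in L^4$ enters only in the final limit $\sigma \to 0$, as noted in the remark following Theorem~\ref{thm1}.
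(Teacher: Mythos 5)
Your overall roadmap coincides with the paper's (linearize with $\xi$ treated as an unknown independent of $\phi$, solve globally when $\phi_0\ge\sigma$, close $\sigma$-independent weighted estimates, handle the top-order weighted bound by the conormal decomposition of $\nabla u$, iterate, then let $\sigma\to 0$ with $\phi_0^\sigma=\phi_0+\sigma$ and Aubin--Lions). The genuine gap is in the one step you describe concretely, namely how you obtain the singular estimate for $\|\xi\nabla u\|_{L^2}$. Testing the equation $aL(\xi u)=-u_t-v\cdot\nabla u-\nabla\phi+\psi\cdot Q(u)-G(\nabla\xi,u)$ against $\xi u$ does not deliver what is needed: if you keep the time structure, the term $\int_\Omega u_t\cdot\xi u$ forces the time derivative onto $\|\sqrt{\xi}u\|_{L^2}^2$, a \emph{zeroth-order} weighted norm that is neither assumed finite at $t=0$ (the compatibility conditions \eqref{g} only weight derivatives of $u_0$, and $\xi\to\infty$ in the far field) nor propagated anywhere in the scheme, and the Dirichlet energy $\|\nabla(\xi u)\|_{L^2}^2$ then appears only time-integrated; if instead you read it as a fixed-time elliptic identity, the right-hand side is bounded by $\|u_t\|_{L^2}\|\xi u\|_{L^2}$ (or $\|\sqrt{\xi}u_t\|_{L^2}\|\sqrt{\xi}u\|_{L^2}$), again involving the uncontrolled weighted zeroth-order norm. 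Either way you do not get the pointwise-in-time bound $\sup_t\|\xi\nabla u(t)\|_{L^2}$, and that bound is indispensable: it enters the right-hand side of the elliptic estimate of Lemma \ref{Appendix} for $\|\nabla^2(\xi u)\|_{L^2}$ at each fixed time, hence the bound \eqref{Step3-0} on $\|\xi\nabla^2u(t)\|_{L^2}$, which is exactly the $c_3$-level quantity ($\|g\nabla^2 v(t)\|_{L^2}$) that the iteration hypotheses \eqref{known} require.

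The paper's mechanism (Step 2 of Lemma \ref{lem-u}) is different: multiply the momentum equation by $\xi u_t$, so that after integration by parts the weighted Dirichlet energy $\int_\Omega\xi^2\bigl(\alpha|\mbox{curl}u|^2+(2\alpha+\beta)|\mbox{div}u|^2\bigr)$ together with the boundary energy $\int_{\partial\Omega}\xi^2K u\cdot u$ sits under $\frac{d}{dt}$, with dissipation $\|\sqrt{\xi}u_t\|_{L^2}^2$; the pressure term $-\int_\Omega\xi\nabla\phi\cdot u_t$ is handled as in \eqref{step2-R5} by integrating by parts and moving $\frac{d}{dt}\int_\Omega\xi\,\mbox{div}u\,(\phi-\phi^\infty)$ to the other side, and the compatibility condition $g_1=\xi_0\nabla u_0\in L^2$ supplies the initial value. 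You would need to replace your step by this (or an equivalent) argument; Proposition \ref{prop-imp} alone, applied to $\mbox{div}(\xi u)$ and $\mbox{curl}(\xi u)$, cannot compensate for the missing sup-in-time control. A second, minor slip: your linearized continuity equation carries the source $(\gamma-1)g\,\mbox{div}v$ with $g$ the frozen copy of $\xi=\phi^{2\kappa}$; the correct (and still linear) choice, as in \eqref{linear}, is $(\gamma-1)\phi\,\mbox{div}v$, which is what makes the estimates for $\phi$ and $\phi_t$ in Lemma \ref{lem-phi} come out with the stated constants.
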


The proof of Theorem \ref{thm2} will be divided into four subsequent subsections \ref{sub3.2}-\ref{sub3.5}.
In subsection \ref{sub3.6}, it will be demonstrated that Theorem \ref{thm2} implies Theorem \ref{thm1}.

\subsection{Linearization}\label{sub3.2}

Guided by the idea in \cite{XZ2021jmpa}, we first linearize the equation for $\xi=\phi^{2\kappa}$ as
\begin{align}\label{equ-xi}
 \xi_t+v\cdot\nabla\xi+(\delta-1)g\mbox{div}v=0,
\end{align}
and then use $\xi$ to define $\psi=\frac{a\delta}{\delta-1}\nabla\xi$.
The linearized equations for $u$ are chosen as
\begin{align*}
u_{t}+v\cdot\nabla u+\nabla\phi +a\xi Lu=\psi\cdot Q(u).
\end{align*}
In order to proceed with nonlinear system (\ref{ibvp-sect3}), we first need to address  the following linearized problem:
\begin{equation}\label{linear}
\left\{
\begin{array}{llll}
\phi_t+v\cdot\nabla\phi+(\gamma-1)\phi\mbox{div}v=0,\\
u_{t}+v\cdot\nabla u+\nabla\phi +a\xi Lu=\psi\cdot Q(u),\\
\xi_t+v\cdot\nabla\xi+(\delta-1)g\mbox{div}v=0,\\
(\phi,u,\xi)|_{t=0}=(\phi_0,u_0,\xi_0)\equiv(\phi_0,u_0,\phi_0^{2\kappa}),\\
u\cdot n=0,\quad  \mbox{curl}u\times n=-K(x)u\quad \mbox{on}\quad \partial\Omega,\\
(\phi,u,\xi)\rightarrow(\phi^{\infty},0,\xi^{\infty})\equiv (\phi^{\infty},0,(\phi^{\infty})^{2\kappa}),\quad
\mbox{as}\ |x|\rightarrow +\infty,\quad t\geq0,
\end{array}
\right.
\end{equation}
where $\phi^{\infty}$ is a positive constant, $v$ is a  known vector and $g$ is a known real  function satisfying $(v(0, x), g(0, x)) =(u_0, \xi_0) =(u_0, \phi_0^{2\kappa})$ and
$v$ has the same boundary conditions as $u$, that is
\begin{equation*}
v\cdot n|_{\partial\Omega}=0,\quad  \mbox{curl}v\times n|_{\partial\Omega}=-K(x)v.
\end{equation*}
Moreover,  $g$ and $v$ belong  to the following spaces:
\begin{align*}
&g\in L^{\infty}\cap C([0,T]\times \Omega),\
 \nabla g\in C([0,T];H^2),\ g_t\in C([0,T];H^2),\nonumber\\
&v\in C([0,T]; H^3)\cap  L^2([0,T]; H^4),\ v_t\in C([0,T]; H^1)\cap  L^2([0,T]; H^2),\ v_{tt}\in L^2([0,T]; L^2).
\end{align*}

In the subsequent two subsections, we first solve the linearized problem \eqref{linear} when the initial density is away
from vacuum ($\inf\phi_0>0$), then we will establish some  uniform estimates which are independent of the lower bound of $\phi_0$.

The following global well-posedness of a strong solution to (\ref{linear}) can be obtained  when $\phi_0> \sigma$ for some positive constant $\sigma$ by the standard theory(\cite{Majda1986book}).
\begin{lem}\label{global}
Assume that
$(\phi_0,u_0,\xi_0)$
satisfy
\begin{equation*}
\phi_0>\sigma,\quad  \phi_0-\phi^{\infty}\in H^3,\quad u_0\in H^3,
\end{equation*}
for some positive constant $\sigma$. Then for any finite constant $T>0$, there exists a unique strong solution $(\phi, u, \xi)$ in $[0,T]\times \Omega$
to (\ref{linear}) such that
\begin{align}\label{lem3.2-reg}
&\phi-\phi^{\infty}\in C([0,T]; H^3),\quad \phi_t\in C([0,T]; H^2),\nonumber\\
&\xi\in L^{\infty}\cap C([0,T]\times \Omega),\quad
 \nabla\xi\in C([0,T];H^2),\quad \xi_t\in C([0,T];H^2), \nonumber\\
&u\in C([0,T]; H^3)\cap L^2([0,T]; H^4), \  u_t\in C([0,T]; H^1)\cap L^2([0,T]; H^2),\nonumber\\
& u_{tt}\in L^2([0,T]; L^2).
\end{align}
\end{lem}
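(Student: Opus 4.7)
The plan is to exploit the fully decoupled structure of the linearized system (\ref{linear}): once $\phi$ and $\xi$ are obtained from their transport equations driven by the known data $(v,g)$, the equation for $u$ reduces to a linear uniformly parabolic system with smooth variable coefficients, to which the classical theory cited in Majda's book applies.

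First I would solve the transport equations
\[
\phi_t+v\cdot\nabla\phi=-(\gamma-1)\phi\,\mbox{div}\,v,\qquad \xi_t+v\cdot\nabla\xi=-(\delta-1)g\,\mbox{div}\,v
\]
by the method of characteristics. Since $v\in C([0,T];H^3)\cap L^2([0,T];H^4)$ gives $\mbox{div}\,v\in L^1([0,T];L^\infty)$ and $g\in L^\infty$, the characteristic flow $\dot X=v(X,t)$ exists globally on $[0,T]$, and the explicit formula along characteristics immediately yields the pointwise bound $\phi(t,\cdot)\geq \sigma e^{-CT}>0$ together with uniform upper and lower bounds for $\xi$. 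The regularities $\phi-\phi^\infty\in C([0,T];H^3)$, $\phi_t\in C([0,T];H^2)$, and the analogous statements for $\xi$, follow from differentiating the transport equations in $x$ up to three times, applying Moser-type commutator estimates involving $\|v\|_{H^3}$ and $\|g\|_{H^3}$, and closing by Gronwall; time continuity is a consequence of the uniqueness together with the continuity of the coefficients.

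With $\phi,\xi$ and $\psi=\frac{a\delta}{\delta-1}\nabla\xi\in C([0,T];H^2)$ in hand, the equation for $u$,
\[
u_t+a\xi\,Lu=-v\cdot\nabla u-\nabla\phi+\psi\cdot Q(u),
\]
together with the Navier-slip boundary conditions and far-field decay, is a linear uniformly parabolic system (since $a\xi\geq c(T)>0$) with smooth lower-order coefficients. I would construct $u$ by a standard iteration/Galerkin scheme and close a priori estimates at the $H^k$ level ($k=1,2,3$) by testing against $u$, $Lu$, and $L^2u$, invoking the Lam\'e elliptic regularity of Lemma \ref{lem-elliptic-1} to upgrade control of $\|Lu\|_{H^k}$ into control of $\|u\|_{H^{k+2}}$. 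The time-regularity $u_t\in C([0,T];H^1)\cap L^2([0,T];H^2)$ and $u_{tt}\in L^2([0,T];L^2)$ is then obtained by differentiating the $u$-equation once in $t$, using the assumed regularities $v_t\in C([0,T];H^1)\cap L^2([0,T];H^2)$, $v_{tt}\in L^2([0,T];L^2)$, $g_t\in C([0,T];H^2)$ to bound the forcing, and repeating the parabolic energy/elliptic argument on the time-differentiated equation. The initial value $u_t(0)$ is defined from the equation at $t=0$, which is legitimate since $\phi_0\geq\sigma$ makes every coefficient nondegenerate, so $u_t(0)\in H^1$. Uniqueness is a standard Gronwall argument on the difference of two solutions.

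The main obstacle I expect is handling the boundary conditions when converting elliptic estimates on $Lu$ into $H^{k+2}$-regularity: Lemma \ref{lem-elliptic-1} applies to $Lu$ rather than to $L(a\xi u)$, so one must first divide by $a\xi$ and then track commutators with $\nabla\xi$ and $\nabla^2\xi$, which are controlled by the regularity of $\nabla\xi\in C([0,T];H^2)$ obtained in the first step. Because the lower bound $\sigma$ of $\phi_0$ is available in this lemma, none of these coefficient quantities are singular and every estimate is finite on any prescribed $[0,T]$; the genuinely delicate task of making the estimates independent of $\sigma$ is postponed to \S\ref{sub3.3}.
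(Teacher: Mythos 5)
Your overall route---decouple the system, solve the two transport equations for $\phi$ and $\xi$ by characteristics, then treat the $u$-equation as a linear parabolic system under the Navier-slip conditions via Galerkin/iteration plus the Lam\'e elliptic regularity, and finally differentiate in time to get $u_t$ and $u_{tt}$---is precisely the ``standard theory'' that the paper invokes for Lemma \ref{global} (the paper gives no details beyond the citation to Majda), so in spirit your argument and the paper's coincide. Two cosmetic points: $g$ is only assumed to satisfy $g\in L^{\infty}$ with $\nabla g\in C([0,T];H^2)$ (it does not decay at infinity), so the Moser/commutator estimates should be phrased with $\|g\|_{L^{\infty}}+\|\nabla g\|_{H^2}$ rather than ``$\|g\|_{H^3}$''.

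There is, however, one step that does not hold as written and on which your whole parabolic construction rests: the claim that the characteristics formula gives uniform positive lower bounds for $\xi$, hence $a\xi\ge c(T)>0$, on an \emph{arbitrary} interval $[0,T]$. Unlike the $\phi$-equation, whose zeroth-order term is multiplicative (along characteristics $\phi$ solves $\frac{d}{dt}\phi=-(\gamma-1)\phi\,\mbox{div}v$, so positivity propagates), the $\xi$-equation in \eqref{linear} has an inhomogeneous source: along characteristics $\xi(t)=\xi_0(x_0)+(1-\delta)\int_0^t (g\,\mbox{div}v)(X(s),s)\,ds$, and the integral is only bounded by $\|g\|_{L^{\infty}}\int_0^T\|\nabla v\|_{L^{\infty}}ds$, which for large $T$ or large data can exceed $\inf\xi_0$. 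Thus $\xi$ may vanish or change sign on $[0,T]$, and then the $u$-equation is no longer uniformly parabolic: the coercivity needed for your Galerkin scheme, the division by $a\xi$ before applying Lemma \ref{lem-elliptic-1}, and the energy estimates all break down. Note that the paper itself establishes positivity of $\xi$ (equivalently, boundedness of $\varphi=\xi^{-1}$) only on the short interval $[0,T_2]$ in Lemma \ref{lem-varphi}, under the quantitative hypotheses (\ref{known}). To complete your proof you must either restrict the conclusion to the time interval on which $\xi$ remains bounded away from zero (which is all that the iteration in \S\ref{sub3.4} actually uses), or supply an additional argument or hypothesis guaranteeing $\inf_{[0,T]\times\Omega}\xi>0$; as stated, ``uniform upper and lower bounds for $\xi$'' is the step that fails for arbitrary finite $T$.
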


\begin{rmk}
From the initial assumption on $\phi_0$, it holds that
\begin{align*}
\xi_0\in L^{\infty},\quad \nabla \xi_0\in H^2
\end{align*}
thanks to $\phi_0> \sigma, \phi_0-\phi^{\infty}\in H^3$ and $\xi_0=\phi_0^{2\kappa}$ ($\kappa<0$). However, the boundedness of $\|\xi_0\|_{ L^{\infty}}$ and $\|\nabla \xi_0\|_{H^2}$  may depend on $\sigma$.
\end{rmk}

\subsection{A priori estimates independent of $\sigma$}\label{sub3.3}

Now we are going to establish some uniform estimates on the solutions derived in Lemma \ref{global}. To this
end, we create a fixed value $T>0$ and a positive constant $c_0$ independent of $\sigma$
such that
\begin{align}\label{c0}
		I_0=&2+\phi^{\infty}+\|\phi_0-\phi^{\infty}\|_3+\|u_0\|_3+\|g_1\|_{L^2}
		+\|g_2\|_{L^2}+\|g_3\|_{L^2}\nonumber\\
&+\|\nabla\xi_0\|_{D^1\cap D^2}+\|\xi_0^{-1} \|_{L^\infty\cap D^{1,6}\cap D^{2,3}\cap D^3}+\|\xi_0^{-1}\nabla\xi_0\|_{L^\infty\cap D^{1,3}\cap D^2}\leq c_0,
		\end{align}
where
\begin{align*}\label{g2}
 g_1=\phi_0^{2\kappa}\nabla u_0,\quad  g_2=a\phi_0^{2\kappa}Lu_0,\quad
g_3=\phi_0^{2\kappa}\nabla (a\phi_0^{2\kappa}Lu_0).
\end{align*}


\begin{rmk}
We now present several weighted estimates concerning the initial values, which play an essential role in the  a priori estimates of $u$.
It is important to note that the weight functions utilized in our analysis are distinct from those defined in \cite{XZ2021jmpa}.
\begin{equation}\label{key-phi}
\|\xi_0\nabla^2\phi_0\|_{L^2}\leq Cc_0^{2},
\end{equation}
\begin{equation}\label{key-2u}
\|\xi_0\nabla^2u_0\|_{L^2}\leq Cc_0^{2},\quad
\|\xi_0\nabla(\psi_0 Q(u_0))\|_{L^2}\leq Cc_0^{2}.
\end{equation}
\end{rmk}

\vspace{2mm}

Now we fixe $T>0$, and assume that there exist some time $T^{*}\in(0,T)$ and constants $c_i$ ($i = 1, 2, 3,4$) such that
\begin{equation*}
1\leq c_0\leq c_1\leq c_2\leq c_3\leq c_4,
\end{equation*}
\begin{align}\label{known}
&\sup_{0\leq t\leq T^{*}}\|\nabla g(t)\|^2_{D^1\cap D^2}\leq c_1^2,\nonumber\\
&\sup_{0\leq t\leq T^{*}}\|v(t)\|^2_1+\int_0^{T^{*}}\left(\|\nabla^2 v\|_{L^2}^2+\|v_t\|_{L^2}^2\right)ds\leq c_2^2,\nonumber\\
&\sup_{0\leq t\leq T^{*}}\left(\|\nabla^2v(t)\|_{L^2}^2+\|v_t(t)\|_{L^2}^2+\|g\nabla^2v(t)\|_{L^2}^2\right)+\int_0^{T^{*}}\left(\|\nabla^3v\|_{L^2}^2+\|\nabla v_t\|_{L^2}^2\right)ds\leq c_3^2,\nonumber\\
&\sup_{0\leq t\leq T^{*}}\left(\|\nabla^3v(t)\|_{L^2}^2+\|\nabla v_t(t)\|_{L^2}^2
\right)+\int_0^{T^{*}}\left(\|\nabla^4v\|_{L^2}^2+\|\nabla^2v_t\|_{L^2}^2+\|v_{tt}\|_{L^2}^2\right)ds
\leq c_4^2,\nonumber\\
&\sup_{0\leq t\leq T^{*}}\left(\|g\nabla^2v(t)\|_{D^1}^2
\right)+\int_0^{T^{*}}\left(\|(g\nabla^2v)_t\|_{L^2}^2+\|g\nabla^2v\|_{D^2}^2\right)ds
\leq c_4^2,\nonumber\\
&\sup_{0\leq t\leq T^{*}}\left(\|g_t(t)\|_{L^\infty}^2+\|\nabla g_t(t)\|_{L^2}^2\right)\leq c_4^2,
\end{align}
where $T^{*}$ and $c_i$ ($i=1,2,3,4$) will be determined later (see (\ref{c_i})), and depend only
on $c_0$, $T$ and the fixed constants $A,\alpha,\beta,\gamma$ and $\delta$.

Let $(\phi,\xi, u)$ be the unique strong solution to (\ref{linear}) on $[0,T]\times \Omega$. A series of uniform local (in time) estimates will be established below.  Hereinafter, we use $C\geq 1$ to denote a generic positive constant that depends only on fixed constants $A,\alpha,\beta,\gamma, \delta$ and $T$.\\

{\it{3.3.1 The a priori estimates for $\phi$ and $\psi$}}

The a priori estimates for $\phi$ (Lemma \ref{lem-phi}) and $\psi$ (Lemma \ref{lem-psi}) are analogous to those for the Cauchy problem of the Navier-Stokes equation (see Lemmas 3.2-3.3 in \cite{XZ2021jmpa}).
\begin{lem}\label{lem-phi}
Let $(\phi,\xi, u)$ be the unique  strong solution to (\ref{linear}) on $[0,T]\times \Omega$. Then for $0\leq t\leq T_1=\min\{T^{*},(1+c_4)^{-2}\}$, it holds
\begin{equation*}\label{phi-lem}
	\begin{split}
&\|\phi(t)\|_{L^\infty}+\|\phi(t)-\phi^{\infty}\|_3\leq Cc_0,\quad
\|\phi_t(t)\|_{L^2}\leq Cc_0c_2,\\
& \|\phi_t(t)\|_{D^1}\leq Cc_0c_3,\quad\|\phi_t(t)\|_{D^2}\leq Cc_0c_4,\quad \|\phi_{tt}(t)\|_{L^2}\leq Cc_4^{3}.
\end{split}
\end{equation*}
\end{lem}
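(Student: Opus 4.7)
The equation for $\phi$ in \eqref{linear} is a pure linear transport with a lower-order reaction term, so the plan is to run a standard Moser-type $H^3$ energy argument for the shifted unknown $\tilde\phi:=\phi-\phi^{\infty}$, which satisfies
\begin{equation*}
\tilde\phi_t+v\cdot\nabla\tilde\phi+(\gamma-1)\tilde\phi\,\mathrm{div}\,v=-(\gamma-1)\phi^{\infty}\mathrm{div}\,v,\qquad \tilde\phi|_{t=0}=\phi_0-\phi^{\infty}\in H^3,
\end{equation*}
and then to read off every time-derivative estimate directly from this equation. No elliptic machinery or boundary-condition analysis enters: the only structural fact I need from the boundary is $v\cdot n|_{\partial\Omega}=0$, which kills boundary contributions in the integrations by parts of the form $\int(v\cdot\nabla\tilde\phi)\tilde\phi=-\tfrac12\int(\mathrm{div}\,v)\tilde\phi^2$.

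For the spatial estimate I would apply $\nabla^k$ for $k=0,\dots,3$, pair with $\nabla^k\tilde\phi$ in $L^2$, and use the Kato--Ponce commutator/Moser estimates $\|[\nabla^k,v\cdot\nabla]\tilde\phi\|_{L^2}\lesssim\|\nabla v\|_{L^\infty}\|\tilde\phi\|_{H^k}+\|\nabla v\|_{H^k}\|\nabla\tilde\phi\|_{L^\infty}$ together with $\|\tilde\phi\mathrm{div}\,v\|_{H^3}\lesssim\|\tilde\phi\|_{L^\infty}\|\nabla v\|_{H^3}+\|\tilde\phi\|_{H^3}\|\nabla v\|_{L^\infty}$. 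Summing yields the differential inequality
\begin{equation*}
\frac{d}{dt}\|\tilde\phi\|_{H^3}\leq C\|\nabla v\|_{L^\infty}\|\tilde\phi\|_{H^3}+C\|\nabla v\|_{H^3}\bigl(\|\tilde\phi\|_{H^3}+\phi^{\infty}\bigr).
\end{equation*}
From the a priori assumptions \eqref{known} and Sobolev, $\|\nabla v\|_{L^\infty}\leq C\|v\|_{H^3}\leq Cc_4$, so the $L^1_t$ cost of the first term over $[0,T_1]$ is at most $Cc_4T_1\leq Cc_4(1+c_4)^{-2}\leq C$; for the second, Cauchy--Schwarz gives $\int_0^{T_1}\|\nabla v\|_{H^3}\,ds\leq T_1^{1/2}\bigl(\int_0^{T_1}\|v\|_{H^4}^2\bigr)^{1/2}\leq (1+c_4)^{-1}c_4\leq C$. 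Gronwall then produces $\|\tilde\phi(t)\|_{H^3}\leq Cc_0$, whence $\|\phi\|_{L^\infty}\leq\|\tilde\phi\|_{H^2}+\phi^{\infty}\leq Cc_0$ by Sobolev embedding in $\mathbb R^3$.

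The time-derivative bounds are then purely algebraic consequences of the equation $\phi_t=-v\cdot\nabla\phi-(\gamma-1)\phi\,\mathrm{div}\,v$. For $\|\phi_t\|_{L^2}$ I would split as $\|v\|_{L^6}\|\nabla\phi\|_{L^3}+\|\phi\|_{L^\infty}\|\nabla v\|_{L^2}\leq Cc_0c_2$, using $\|v\|_{L^6}\leq C\|\nabla v\|_{L^2}\leq Cc_2$ and the already-established bound on $\tilde\phi$. For $\|\phi_t\|_{D^1}$ and $\|\phi_t\|_{D^2}$, apply $\nabla$ and $\nabla^2$ and distribute derivatives with Hölder's inequality using the pairs $(L^3,L^6)$ and $(L^\infty,L^2)$; the key point is that $\|v\|_{L^\infty}\leq C\|v\|_{H^2}\leq Cc_3$ and $\|\nabla^2v\|_{L^3}\leq C\|\nabla^2v\|_{H^1}\leq Cc_4$, producing bounds $Cc_0c_3$ and $Cc_0c_4$ respectively. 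Finally $\phi_{tt}=-v_t\cdot\nabla\phi-v\cdot\nabla\phi_t-(\gamma-1)\phi_t\mathrm{div}\,v-(\gamma-1)\phi\,\mathrm{div}\,v_t$; the worst contributors are $\|v\|_{L^\infty}\|\nabla\phi_t\|_{L^2}\leq Cc_3\cdot c_0c_3$ and $\|\phi_t\|_{L^6}\|\mathrm{div}\,v\|_{L^3}\leq Cc_0c_3\cdot c_3$, both controlled by $Cc_0c_3^2\leq Cc_4^3$, while $\|v_t\cdot\nabla\phi\|_{L^2}$ and $\|\phi\,\mathrm{div}\,v_t\|_{L^2}$ are absorbed into the same bound via $\|\nabla v_t\|_{L^2}\leq c_4$.

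The only subtlety — and the thing I would double-check carefully — is the bookkeeping of which $c_i$ appears in each estimate: the lemma gives a \emph{decreasing} sharpness ($c_2$ at $L^2$, then $c_3$, then $c_4$, then $c_4^3$), and this pattern is dictated by using only the a priori bounds available at the corresponding Sobolev level of $v$ (pointwise-in-time for $\|\phi_t\|_{L^2}$, one derivative more for $\|\phi_t\|_{D^1}$, etc.). The choice $T_1=(1+c_4)^{-2}$ is precisely what makes both $c_4T_1$ and $c_4 T_1^{1/2}$ of order one, so the Gronwall exponential in Step 2 is harmless; any looser choice of $T_1$ would let the exponential grow with $c_4$ and break the uniform bound $Cc_0$.
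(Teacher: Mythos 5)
Your proof is correct and follows essentially the same route as the paper, which simply invokes the standard transport-equation energy estimates of Lemmas 3.2--3.3 in Xin--Zhu (the Cauchy-problem analogue), the only extra ingredient being exactly the one you identify: $v\cdot n|_{\partial\Omega}=0$ kills the boundary terms in the integrations by parts. Your constant bookkeeping ($c_2$, $c_3$, $c_4$, $c_4^3$) and the role of $T_1=(1+c_4)^{-2}$ in taming the Gronwall factor match the intended argument.
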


\begin{lem}\label{lem-psi}
Let $(\phi,\xi, u)$ be the unique strong solution to (\ref{linear}) on $[0,T]\times \Omega$. Then for $0\leq t\leq T_1$, it holds
\begin{equation*}\label{psi}
	\begin{split}
&\|\psi(t)\|_{L^\infty}+\|\psi(t)\|_{D^1\cap D^2}\leq C c_0,\quad
\|\psi_t(t)\|_{L^2}\leq Cc_3^2,\quad\|\psi_t(t)\|_{D^1}\leq Cc_4^2,\\
&\|\xi_t(t)\|_{L^\infty}\leq Cc_3^{\frac{3}{2}}c_4^{\frac{1}{2}}, \quad  \|\nabla\xi_t\|_{L^2}\leq Cc_3^2,
\quad \int_0^t\|\psi_{tt}\|_{L^2}^2ds\leq Cc_4^4.
\end{split}
\end{equation*}
\end{lem}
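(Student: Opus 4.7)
The plan is to derive a symmetric hyperbolic equation for $\psi$ by differentiating the $\xi$-equation in $x$, and then obtain every bound in the lemma through standard $L^2$ energy estimates combined with Sobolev embeddings. Taking $\nabla$ of (\ref{equ-xi}) and using $\psi=\frac{a\delta}{\delta-1}\nabla\xi$ gives
\begin{equation*}
\psi_t+(v\cdot\nabla)\psi+(\nabla v)^{T}\psi+a\delta\bigl(g\nabla\mbox{div}\,v+(\mbox{div}\,v)\nabla g\bigr)=0.
\end{equation*}
Since $v\cdot n=0$ on $\partial\Omega$, the transport term produces no boundary flux under integration by parts, so the argument runs essentially in parallel with the Cauchy treatment of Lemma 3.3 in \cite{XZ2021jmpa}, and the exterior-domain geometry plays no role here.

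For the $D^1\cap D^2$ estimate on $\psi$, I would apply $\nabla^k$ ($k=1,2$) to the above equation, test against $\nabla^k\psi$, and control three contributions: the self-term $\tfrac12\int(\mbox{div}\,v)|\nabla^k\psi|^2$ from the transport operator, bounded by $\|\nabla v\|_{L^\infty}\|\nabla^k\psi\|_{L^2}^2\leq Cc_4\|\nabla^k\psi\|_{L^2}^2$; the commutators $[\nabla^k,v\cdot\nabla]\psi$ together with the $(\nabla v)^T\psi$ contribution, handled by Moser-type products of $\|v\|_{H^3}$ and $\|\psi\|_{H^2}$; and the forcing $\nabla^k(g\nabla\mbox{div}\,v+(\mbox{div}\,v)\nabla g)$, controlled by the bounds on $g\nabla^2 v$ in $D^1$ pointwise and in $D^2$ in time from the fifth line of (\ref{known}) plus $\|\nabla g\|_{D^1\cap D^2}\leq c_1$. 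A Gronwall inequality then yields $\|\psi(t)\|_{D^1\cap D^2}^2\leq C\|\psi_0\|_{D^1\cap D^2}^2+C\int_0^t(\text{polynomial in }c_4)\,ds$; restricting to $t\leq T_1\leq(1+c_4)^{-2}$ collapses the second term and leaves $\leq Cc_0^2$. The $L^\infty$ bound is then immediate from Lemma \ref{lem-2-1}, via $\|\psi\|_{L^\infty}\leq C\|\psi\|_{D^1}^{1/2}\|\psi\|_{D^2}^{1/2}\leq Cc_0$.

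The time-derivative bounds are read off the equations directly. From the $\psi$-equation,
\begin{equation*}
\|\psi_t\|_{L^2}\leq C\bigl(\|v\|_{L^\infty}\|\nabla\psi\|_{L^2}+\|\nabla v\|_{L^3}\|\psi\|_{L^6}+\|g\nabla^2 v\|_{L^2}+\|\nabla g\|_{L^3}\|\nabla v\|_{L^6}\bigr)\leq Cc_3^2,
\end{equation*}
and applying one more $\nabla$ gives $\|\psi_t\|_{D^1}\leq Cc_4^2$ after controlling $\|g\nabla^3 v\|_{L^2}\leq\|g\nabla^2 v\|_{D^1}+\|\nabla g\|_{L^\infty}\|\nabla^2 v\|_{L^2}\leq Cc_4^2$. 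The identity $\nabla\xi_t=\tfrac{\delta-1}{a\delta}\psi_t$ yields $\|\nabla\xi_t\|_{L^2}\leq Cc_3^2$ at once, while $\xi_t=-v\cdot\nabla\xi-(\delta-1)g\,\mbox{div}\,v$ gives
\begin{equation*}
\|\xi_t\|_{L^\infty}\leq \|v\|_{L^\infty}\|\nabla\xi\|_{L^\infty}+C\|g\|_{L^\infty}\|\nabla v\|_{L^\infty}\leq Cc_3^{3/2}c_4^{1/2},
\end{equation*}
via the interpolations $\|v\|_{L^\infty}\leq C\|v\|_{H^2}\leq Cc_3$, $\|\nabla\xi\|_{L^\infty}\leq Cc_0$, and $\|\nabla v\|_{L^\infty}\leq C\|\nabla^2 v\|_{L^2}^{1/2}\|\nabla^3 v\|_{L^2}^{1/2}\leq Cc_3^{1/2}c_4^{1/2}$.

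Finally, for the time-integrated $\psi_{tt}$ bound, I would differentiate the $\psi$-equation in $t$ to get
\begin{equation*}
\psi_{tt}=-(v_t\cdot\nabla)\psi-(v\cdot\nabla)\psi_t-(\nabla v_t)^{T}\psi-(\nabla v)^{T}\psi_t-a\delta(g\nabla\mbox{div}\,v)_t-a\delta\bigl((\mbox{div}\,v)\nabla g\bigr)_t,
\end{equation*}
take $L^2$ norms, and integrate in $s$. Here I expect the main obstacle: the term $(g\nabla\mbox{div}\,v)_t$ cannot be split as $g_t\nabla^2 v+g\nabla^2 v_t$ and bounded factor by factor without losing a derivative, since neither $\|\nabla^2 v\|_{L^\infty_tL^\infty_x}$ nor $\|\nabla^2 v_t\|_{L^\infty_tL^2_x}$ is available from (\ref{known}). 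This is precisely why the composite bound $\int_0^{T^*}\|(g\nabla^2 v)_t\|_{L^2}^2\,ds\leq c_4^2$ is built into the induction hypothesis; combined with $\int_0^{T^*}(\|\nabla v_t\|_{L^2}^2+\|v_{tt}\|_{L^2}^2)\,ds\leq c_4^2$ and the previously obtained pointwise bounds on $\psi,\nabla\psi,\psi_t$, it closes the estimate at $\int_0^t\|\psi_{tt}\|_{L^2}^2\,ds\leq Cc_4^4$.
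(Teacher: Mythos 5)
Your overall route is the same as the paper's (the paper itself only says these bounds are obtained as in Lemmas 3.2--3.3 of \cite{XZ2021jmpa}): differentiate (\ref{equ-xi}) to get the transport equation for $\psi$, do $D^1\cap D^2$ energy estimates with Gronwall on $[0,T_1]$, read the time-derivative bounds off the equation, and use the composite hypothesis on $(g\nabla^2 v)_t$ from (\ref{known}) for $\psi_{tt}$ --- that last observation is exactly right and is indeed why that quantity is built into the induction hypotheses.

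There is, however, one genuine gap: your bound $\|\xi_t\|_{L^\infty}\leq \|v\|_{L^\infty}\|\nabla\xi\|_{L^\infty}+C\|g\|_{L^\infty}\|\nabla v\|_{L^\infty}$ uses $\|g\|_{L^\infty}$, which is \emph{not} among the quantities controlled by $c_0,\dots,c_4$ in (\ref{c0}) and (\ref{known}), and cannot be: the whole point of \S 3.3 is that the constants are independent of $\sigma$, while (see the remark after Lemma \ref{global}) $\|\xi_0\|_{L^\infty}$ may depend on $\sigma$, and in the iteration one takes $g=\xi^{k}$, whose sup-norm behaves like $\sigma^{2\kappa}\to\infty$ as $\sigma\to 0$ (only $\xi^{-1}$, $\nabla\xi$, $\xi_t$, etc.\ are uniformly bounded). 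So as written your inequality does not yield $Cc_3^{3/2}c_4^{1/2}$ with a $\sigma$-independent $C$. The fix is to treat $g\,\mbox{div}v$ as a composite quantity, exactly as you do elsewhere: e.g.\ $\|g\,\mbox{div}v\|_{L^\infty}\leq C\|\nabla(g\nabla v)\|_{L^2}^{1/2}\|\nabla(g\nabla v)\|_{L^6}^{1/2}$ by Lemma \ref{lem-2-1}, with $\nabla(g\nabla v)=\nabla g\otimes\nabla v+g\nabla^2 v$, $\|\nabla g\|_{L^\infty}\leq Cc_1$ and $\|g\nabla^2 v\|_{L^2}\leq c_3$, $\|g\nabla^2 v\|_{L^6}\leq C\|g\nabla^2v\|_{D^1}\leq Cc_4$ from (\ref{known}); this lands within the stated $Cc_3^{3/2}c_4^{1/2}$. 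A smaller slip of the same type occurs in your $\|\psi_t\|_{L^2}$ estimate, where you invoke $\|\nabla g\|_{L^3}$: only $\nabla g\in L^6\cap L^\infty$ (via $\nabla g\in D^1\cap D^2$) is available, not $L^3$; bound that term instead by $\|\nabla g\|_{L^6}\|\mbox{div}v\|_{L^3}$ or $\|\nabla g\|_{L^\infty}\|\nabla v\|_{L^2}$, which changes nothing in the final power of $c_3$.
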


{\it{3.3.2. The a priori estimates for two $\xi-$related auxiliary variables}}

In order to obtain the uniformly a priori estimates for the solutions to the corresponding nonlinear problem as in
section \ref{sub3.4}, it is helpful to give some more  estimates for another two new $\xi$-related quantities:
\begin{align}\label{zeta-linear}
	\varphi=\xi^{-1},\quad  f=\psi\varphi=\frac{a\delta}{\delta-1}\frac{\nabla\xi}{\xi}=\left(f^{(1)},f^{(2)},f^{(3)}\right).
\end{align}
Then, it is easy to see that
\begin{equation}\label{intial3}
	\varphi_0=\phi_0^{-2\kappa}\in L^\infty\cap D^{1,6}\cap D^{2,3}\cap D^3,\quad f_0=\frac{2a\delta\kappa}{\delta-1}\frac{\nabla\phi_0}{\phi_0}\in L^\infty\cap L^6\cap D^{1,3}\cap D^2,
\end{equation}
thanks to the initial assumptions \eqref{intial2}.
\begin{lem}\label{lem-varphi}
	Let $(\phi,\xi, u)$ be the unique  strong solution to (\ref{linear}) on $[0,T]\times \Omega$. Then for $0\leq t\leq T_2=\min\{T^{*},(1+c_4)^{-4}\}$, it holds
	\begin{align*}
			&\|\varphi(t)\|^2_{D^{1,6}\cap D^{2,3}\cap D^3}+\|f(t)\|^2_{L^\infty\cap L^6\cap D^{1,3}\cap D^2}\leq Cc^4_0,\\
			&\xi(x,t)>\frac{1}{2c_0},\quad \frac{2}{3}\sigma^{-2\kappa}<\varphi(x,t)\leq 2\|\varphi_0\|_{L^\infty}\leq 2c_0,\\
			& \|\varphi_t(t)\|^2_{ L^6\cap D^{1,3}\cap D^2}+\|f_t(t)\|^2_{L^3\cap D^1}\leq Cc^{10}_4.
		\end{align*}
\end{lem}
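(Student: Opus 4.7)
The plan is to derive transport equations with a nonlinear source for $\varphi=\xi^{-1}$ and with forcing for $f=\tfrac{a\delta}{\delta-1}\nabla\xi/\xi$, and then combine the method of characteristics (for pointwise bounds) with $L^{p}$/Sobolev energy estimates (for higher-order norms). Starting from the $\xi$-equation in (\ref{linear}), a direct computation gives
\begin{align*}
\varphi_t+v\cdot\nabla\varphi=(1-\delta)g\varphi^{2}\,\mbox{div}\,v,
\end{align*}
while taking $\nabla$ of the relation $(\log\xi)_t+v\cdot\nabla\log\xi+(\delta-1)(g/\xi)\mbox{div}\,v=0$ yields
\begin{align*}
f_t+(v\cdot\nabla)f+(\nabla v)^{T}f+a\delta\nabla(g\varphi\,\mbox{div}\,v)=0.
\end{align*}

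First I would establish the pointwise bounds. Along the characteristics $\dot{X}=v(t,X)$, the $\varphi$-equation becomes the scalar Bernoulli ODE $\dot\varphi=(1-\delta)g(\mbox{div}\,v)\varphi^{2}$, which integrates explicitly to
\begin{align*}
\frac{1}{\varphi(X(t,x_0),t)}=\frac{1}{\varphi_0(x_0)}+(\delta-1)\int_0^t(g\,\mbox{div}\,v)(X(s,x_0),s)\,ds.
\end{align*}
Bounding the integrand by $\|g\|_{L^\infty}\|\nabla v\|_{L^\infty}\lesssim c_4^{2}$ via (\ref{known}) and the embedding $H^{2}\hookrightarrow L^\infty$, the choice $T_2=\min\{T^{*},(1+c_4)^{-4}\}$ keeps the integral below $1/(4c_0)$, yielding $\tfrac{1}{2c_0}\leq\varphi\leq 2c_0$ and hence $\xi\geq 1/(2c_0)$. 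The $\sigma$-dependent lower bound $\varphi>\tfrac{2}{3}\sigma^{-2\kappa}$ follows from $\varphi_0=\phi_0^{-2\kappa}>\sigma^{-2\kappa}$ (since $\kappa<0$). The $L^\infty$ bound for $f$ is obtained similarly along characteristics, after controlling $\|\nabla(g\varphi\,\mbox{div}\,v)\|_{L^\infty}$ by the $L^\infty$ bound for $\varphi$ just obtained together with $\|\nabla g\|_{H^2}$ and $\|v\|_{H^3}$.

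Next I would derive the higher-order spatial estimates for $\|\varphi\|_{D^{1,6}\cap D^{2,3}\cap D^3}$ and $\|f\|_{L^6\cap D^{1,3}\cap D^2}$ by energy methods on the transport equations. Differentiating $k$ times and pairing with $|D^{k}(\cdot)|^{p-2}D^{k}(\cdot)$ for the appropriate $p\in\{6,3,2\}$, the commutator $[D^{k},v\cdot\nabla]$ is controlled using Sobolev embeddings and $\|\nabla v\|_{H^2}\leq c_4$, while the source-term derivatives $D^{k}(g\varphi^{2}\mbox{div}\,v)$ and $D^{k}\nabla(g\varphi\,\mbox{div}\,v)$ are handled via Moser product inequalities together with the $L^\infty$ bound on $\varphi$ and the bounds from (\ref{known}), Lemma \ref{lem-phi}, and Lemma \ref{lem-psi}. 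Gronwall's inequality on $[0,T_2]$ closes every estimate with bound $Cc_0^{4}$ on the squared norms.

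For the time derivatives, the equations themselves express $\varphi_t$ and $f_t$ as polynomials in $\varphi$, $f$, $g$, $v$ and their spatial derivatives; substituting the spatial bounds above together with $\|g_t\|_{L^\infty}\leq c_4$, $\|\nabla g_t\|_{L^2}\leq c_4$, and the $v_t$-bounds from (\ref{known}) yields the asserted estimate $\|\varphi_t\|_{L^6\cap D^{1,3}\cap D^{2}}^{2}+\|f_t\|_{L^3\cap D^1}^{2}\leq Cc_4^{10}$. The main obstacle of the whole argument is the nonlinear $\varphi^{2}$ source in the $\varphi$-equation: without the uniform (in $\sigma$) $L^\infty$ bound on $\varphi$ supplied by the Bernoulli ODE along characteristics, none of the energy estimates can close, and the smallness condition $t\leq(1+c_4)^{-4}$ is exactly what is needed to prevent $\varphi$ from doubling along the flow.
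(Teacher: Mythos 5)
Your treatment of $\varphi$ is essentially the paper's: the paper also integrates the quadratic transport equation for $\varphi=\xi^{-1}$ along the particle paths, obtaining the explicit formula $\varphi(W(x_0,t),t)=\varphi_0(x_0)\big(1+(1-\delta)\varphi_0(x_0)\int_0^t g\,\mbox{div}v\,ds\big)^{-1}$, and then gets the $D^{1,6}\cap D^{2,3}\cap D^3$ bounds by standard energy estimates using $v\cdot n|_{\partial\Omega}=0$. Two small slips in your version: the source of your $\varphi$-equation should read $(\delta-1)g\varphi^{2}\mbox{div}v$ rather than $(1-\delta)$ (immaterial, since only the absolute value of the time integral is used), and the intermediate claim $\varphi\geq\frac{1}{2c_0}$ is neither needed nor true in general (near the far field $\varphi_0\sim\sigma^{-2\kappa}$, which is small); the lemma only asserts the upper bound $\varphi\leq 2\|\varphi_0\|_{L^\infty}\leq 2c_0$, equivalently $\xi>\frac{1}{2c_0}$, together with the $\sigma$-dependent lower bound, both of which your formula does give.

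Where you genuinely diverge is $f$. The paper never writes a transport equation for $f$: it uses the algebraic identity $f=\psi\varphi$ and simply multiplies the bounds of Lemma \ref{lem-psi} for $\psi$ (and $\psi_t$) by the $\varphi$-bounds just obtained, which also yields $f_t=\psi_t\varphi+\psi\varphi_t$ at once. Your route via $f_t+(v\cdot\nabla)f+(\nabla v)^{T}f+a\delta\nabla(g\varphi\,\mbox{div}v)=0$ is correct in principle but heavier, and as sketched the $L^\infty$ step for $f$ has a circularity you should flag: $\nabla(g\varphi\,\mbox{div}v)$ contains $g(\nabla\varphi)\mbox{div}v$, and $\nabla\varphi=-\varphi^{2}\nabla\xi$ is controlled in $L^\infty$ only through $\nabla\xi$, i.e.\ through $\psi$ (Lemma \ref{lem-psi}) or through $f$ itself, not through the $L^\infty$ bound on $\varphi$ alone as you claim; moreover $\nabla\mbox{div}v$ is not in $L^\infty_t L^\infty_x$ under (\ref{known}), only square-integrable in time, so that contribution must be handled by Cauchy--Schwarz in $t$. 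Both points are fixable with tools you already cite, but once you invoke Lemma \ref{lem-psi} for $\|\nabla\xi\|_{L^\infty}$ you are essentially reconstructing the paper's shortcut $f=\psi\varphi$. The paper's product argument buys a one-line proof given Lemmas \ref{lem-phi}--\ref{lem-psi}; your PDE argument would buy independence from the $\psi$-estimates, at the cost of redoing comparable work.
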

\begin{proof}
	According to the definition of $\varphi$ and the equation of $\xi$ $(\ref{linear})_3$, it is easy to see that $\varphi$ satisfies the following equation
		\begin{equation}\label{varphi-linear}
		\varphi_t+v\cdot\nabla\varphi-(\delta-1)g\varphi^2\mbox{div}v=0.
	\end{equation}
Along with the particle path $W(x_0,t)$ defined as
\begin{equation*}
	\left\{
	\begin{array}{ll}
		\frac{d}{ds}W(x_0,t)=v(W(x_0,t),s),\quad 0\leq t\leq T;\\
W(x_0,0)=x_0,\quad x\in\Omega,
	\end{array}
	\right.
\end{equation*}
 we obtain
\begin{align*}
	 \varphi(W(x_0,t),t)=\varphi_0(x_0)\left(1+(1-\delta)\varphi_0(x_0)\int_0^tg\mbox{div}v(W(x_0,s),s)ds\right)^{-1}.
	\end{align*}
Then for $0\leq t\leq T_2=\min\{T^{*},(1+c_4)^{-4}\}$,
\begin{align*}
\frac{2}{3}\sigma^{-2\kappa}<\varphi(x,t)\leq 2\|\varphi_0\|_{L^\infty}\leq 2c_0, \quad \text{for}\quad (x,t)\in\Omega\times[0,T_2].
	\end{align*}
	Applying the standard energy estimates and integrating by parts  with the boundary condition $v\cdot n|_\Omega=0$, we obtain the desired estimates for $\varphi$.
 Using the estimates of $\psi$, $\varphi$ and the relation $f=\psi\varphi$, we can directly obtain the estimates for $f$.
\end{proof}

{\it{3.3.3. The a priori estimates for u}}\\

 Based on the estimates of $\phi$ and $\xi$ obtained in Lemmas \ref{lem-phi}-\ref{lem-varphi}, we are now ready to give the estimates for the velocity.
This subsection is devoted to the key distinctions between the initial-boundary value problem and the Cauchy problem, which constitutes a major contribution of this work.

 As mentioned in the introduction, unlike the Cauchy problem, two main difficulties arise in our analysis. First,
 the weighted first-order derivative term $\|\xi\nabla u\|_{L^2}$ will appear as a remainder when applying elliptic estimates to the operator $L(\xi u)$ on exterior domains.
 To close the a priori estimates, it becomes necessary to establish a more singular weighted estimate for $\|\xi\nabla u\|_{L^2}$, rather than the natural weighted estimate  $\|\sqrt{\xi}\nabla u\|_{L^2}$ (see Step 2 in the proof of Lemma \ref{lem-u}).
However, the presence of strongly singular weights in the estimation leads to certain singular terms that require delicate analysis, such as the terms $R_5$ in (\ref{step2-R5}) and  $I_6$ in (\ref{Step4-R_6}).
Second, the estimation of $\|\xi \nabla^2 u\|_{D^2}$
presents further challenges. To circumvent these issues, we employ an approach based on conormal space techniques (see Lemma \ref{lem-u-2}).

\begin{lem}\label{lem-u}
	Let $(\phi,\xi,u)$ be the unique strong solution to (\ref{linear}) on $[0,T]\times \Omega$. Then

(i)\ for
$0\leq t\leq T_2$, it holds
	\begin{align*}
 &\|u(t)\|^2_{L^2}+\int_0^t\|\sqrt{\xi}\nabla u\|^2_{L^2}ds\leq Cc_0^2,\quad
	 \|\xi\nabla u(t)\|_{L^2}^2
			+\int_0^t\|\sqrt{\xi} u_t\|^2_{L^2}ds
			\leq C c_0^4,\\
	&\int_0^t\left(\|u_t\|^2_{L^2}+\|\xi\nabla^{2}u\|^2_{L^2}\right)ds\leq Cc_0^5,\\
&\|u_t(t)\|^2_{L^2}+\int_0^t\|\sqrt{\xi}\nabla u_t\|^2_{L^2}ds
\leq Cc_0^7,\quad
\int_0^t\|\nabla u_t\|^2_{L^2}ds
\leq  Cc_0^8,
\end{align*}
and
\begin{align}\label{nabla^3u}
\|\xi\nabla^2u\|^2_{L^2}\leq Cc_0^6c_2c_3, \quad  \int_0^t(\|\xi \nabla^3u\|^2_{L^{2}}+\|\xi \nabla^2u\|^2_{D^{1}}+\|\xi u\|^2_{D^{3}})ds\leq Cc_0^8.
\end{align}

(ii) for $0\leq t\leq T_3=\min\{T^{*},(1+C_4)^{-6}\}$, there holds
\begin{align*}
\|\xi\nabla u_t(t)\|_{L^2}^2+\int_{0}^t\|\sqrt{\xi} u_{tt}\|_{L^2}^2ds
\leq Cc_0^7c_2^2, \quad
\|\nabla u_t(t)\|_{L^2}^2\leq Cc_0^9c_2^2, \quad
\int_{0}^t\| u_{tt}\|_{L^2}^2ds\leq Cc_0^8c_2^2,
\end{align*}
and
	\begin{align}\label{nabla^2u_t}
		\|\xi u\|^2_{D^{3}} +\|\xi \nabla^3u\|^2_{L^{2}}\leq Cc_0^8c_3^4,\ \int_0^t\|(\xi\nabla^2u)_t\|^2_{L^2}ds\leq Cc_0^8c_2^2,\
\int_0^t\|\nabla^2u_t\|^2_{L^2}ds\leq Cc_0^{10}c_2^2.
\end{align}
\end{lem}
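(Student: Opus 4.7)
The plan is to build the estimates in the hierarchy stated in the lemma: first the basic $L^2$-energy and natural $\sqrt\xi$-weighted first derivative; then the singular-weighted bound $\|\xi\nabla u\|_{L^2}$ (the principal novelty); next the time-derivative controls obtained by differentiating $(\ref{linear})_2$ in $t$; and finally the higher Sobolev/weighted bounds via elliptic regularity (Lemmas \ref{lem-elliptic-1} and \ref{Appendix}) combined with the equation. Every step closes by Grönwall with growth absorbed into polynomial powers of the $c_i$ of (\ref{known}) and the bounds of Lemmas \ref{lem-phi}--\ref{lem-varphi}.

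For the basic $L^2$-energy, I would test $(\ref{linear})_2$ by $u$. After integration by parts and the Navier-slip boundary condition (\ref{boundary2-2}) together with the hypothesis $K(x)\geq 0$, the leading term $a\int \xi\, Lu\cdot u$ produces a positive $c\|\sqrt\xi\nabla u\|_{L^2}^2$, with a commutator $\int (\nabla\xi\otimes u):\nabla u$ absorbed via $\|\nabla\xi\|_{L^\infty}\leq Cc_0$ from Lemma \ref{lem-psi}. The remaining convective, pressure, and $\psi\cdot Q(u)$ terms are handled with the bounds already available. In parallel, testing $\partial_t(\ref{linear})_2$ by $u_t$ yields a bound on $\|u_t\|_{L^2}$ and on the time integral of $\|\sqrt\xi\nabla u_t\|_{L^2}^2$; the only nontrivial source term $\int \xi_t Lu\cdot u_t$ is controlled by the bound $\|\xi_t\|_{L^\infty}\leq Cc_3^{3/2}c_4^{1/2}$ of Lemma \ref{lem-psi}, and the initial value $u_t(0)$ lies in $L^2$ by the compatibility assumption $g_2\in L^2$ in (\ref{g}).

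The main obstacle is the singular-weighted bound $\|\xi\nabla u\|_{L^2}$, which is needed to close Lemma \ref{Appendix} applied to $L(\xi u)$ and hence to access $\|\xi\nabla^2 u\|_{L^2}$. The idea is to rewrite $(\ref{linear})_2$ as
\begin{equation*}
a L(\xi u) = \psi\cdot Q(u) - u_t - v\cdot\nabla u - \nabla\phi + a[L,\xi]u,
\end{equation*}
so that Lemma \ref{Appendix} with $s=0$ gives $\|\xi u\|_{D^2}^2\leq C\bigl(\|u_t\|_{L^2}^2+\|\xi\nabla u\|_{L^2}^2+\text{(controlled terms)}\bigr)$, the commutator $[L,\xi]u$ being controlled via $\nabla\xi\in L^\infty$ and $\nabla^2\xi\in H^1$ from Lemma \ref{lem-psi}. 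To close the remaining term $\|\xi\nabla u\|_{L^2}$ directly, I would test $(\ref{linear})_2$ against the \emph{singular} multiplier $\xi^2 u$: integration by parts on $a\int \xi^2 Lu\cdot u$ produces $\alpha\|\xi\nabla u\|_{L^2}^2+(\alpha+\beta)\|\xi\,\mbox{div}\,u\|_{L^2}^2$ together with a boundary contribution from $\partial\Omega$ and cross terms of the form $\int \xi(\nabla\xi\otimes u):\nabla u$. The latter are absorbed via $\|\nabla\xi\|_{L^\infty}$ and Cauchy--Schwarz; the boundary contribution, after using (\ref{boundary2-2}) and the trace estimate of Lemma \ref{lem-ell}, is dominated by a small constant times $\|\xi\nabla u\|_{L^2}$ plus fully controlled pieces, allowing absorption. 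Coupling the resulting differential inequality with the $\|u_t\|_{L^2}$ bound yields $\|\xi\nabla u\|_{L^2}$, and feeding this back into Lemma \ref{Appendix} produces $\|\xi\nabla^2 u\|_{L^2}$.

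The remaining bounds in (\ref{nabla^3u}) and (\ref{nabla^2u_t}) follow by iterating the same scheme at one higher order. For $\|\xi u\|_{D^3}^2$ and $\|\xi\nabla^3 u\|_{L^2}^2$ I would apply Lemma \ref{Appendix} with $s=1$ to $L(\xi u)$, the $H^1$-norm of the commutator being controlled through $\nabla\xi\in D^{1,3}\cap D^2$ from Lemma \ref{lem-psi} and $\varphi$ from Lemma \ref{lem-varphi}. For part (ii), differentiating $(\ref{linear})_2$ in $t$ and testing successively by $u_t$ and by $u_{tt}$ yields $\|\xi\nabla u_t\|_{L^2}$ and $\|\sqrt\xi u_{tt}\|_{L^2}$; the initial value $\nabla u_t(0)$ is in $L^2$ by the third compatibility condition $g_3\in L^2$. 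Finally, $(\xi\nabla^2 u)_t=\xi_t\nabla^2 u+\xi\nabla^2 u_t$ is bounded by combining the estimate on $\|\xi\nabla^2 u_t\|_{L^2}$ (obtained from Lemma \ref{Appendix} applied to $L(\xi u_t)$) with the bound on $\xi_t$ from Lemma \ref{lem-psi}, and $\nabla^2 u_t$ follows from the standard elliptic estimate of Lemma \ref{lem-elliptic-1} applied to $Lu_t$.
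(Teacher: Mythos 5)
There is a genuine gap at the central step, the singular‑weight bound. You propose to obtain $\|\xi\nabla u\|_{L^2}$ by testing $(\ref{linear})_2$ with the spatial multiplier $\xi^2u$. That energy identity makes the time‑derivative term $\tfrac12\tfrac{d}{dt}\|\xi u\|_{L^2}^2$ and the viscous term a \emph{time‑integrated} dissipation of a weighted gradient; it therefore yields at best $\sup_t\|\xi u(t)\|_{L^2}$ together with $\int_0^t\|\xi\nabla u\|_{L^2}^2\,ds$ (in fact, with the extra $\xi$ already present in $a\xi Lu$, the dissipation carries the weight $\xi^{3/2}$). What the lemma asserts, and what is indispensable as input to Lemma \ref{Appendix} applied to $L(\xi u)$ (whose right‑hand side contains $\|\xi\nabla u\|_{L^2}^2$ pointwise in time), is the \emph{pointwise‑in‑time} bound $\|\xi\nabla u(t)\|_{L^2}^2\le Cc_0^4$ paired with $\int_0^t\|\sqrt{\xi}u_t\|_{L^2}^2\,ds$. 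Your phrase ``coupling the resulting differential inequality with the $\|u_t\|_{L^2}$ bound'' does not bridge this: neither the elliptic estimate nor the div--curl inequality can produce $\|\xi\nabla u(t)\|_{L^2}$ from $\|u_t\|_{L^2}$ and $\|\xi u\|_{L^2}$, and since $\xi$ is not bounded above uniformly in $\sigma$, one cannot trade $\xi$ for a constant. The paper's mechanism is different: it multiplies the equation by $\xi u_t$, so that $a\int\xi^2Lu\cdot u_t$ becomes $\tfrac{d}{dt}$ of the weighted curl/div energy (plus commutators, the boundary term $\tfrac{d}{dt}\int_{\partial\Omega}\xi^2Ku\cdot u$, and a pressure term $R_5$ handled by an extra integration by parts and a total time derivative), simultaneously delivering $\int\|\sqrt{\xi}u_t\|^2$; the $u_t$‑energy estimate then comes \emph{after} this, using $\int\|\xi\nabla^2u\|^2$ from the elliptic step.

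The same issue recurs in part (ii): testing the time‑differentiated equation by the unweighted $u_{tt}$ can only give pointwise control of $\|\sqrt{\xi}\nabla u_t\|$, not the stated $\|\xi\nabla u_t(t)\|_{L^2}^2\le Cc_0^7c_2^2$; the paper uses the multiplier $\xi u_{tt}$, again turning $a\int\xi^2Lu_t\cdot u_{tt}$ into $\tfrac{d}{dt}\|\xi\,\mathrm{curl}\,u_t\|_{L^2}^2+\cdots$ with dissipation $\int\xi|u_{tt}|^2$. Moreover, the initial value that must be controlled there is the \emph{weighted} quantity $\|\xi\nabla u_t(0)\|_{L^2}$, which is read off from the momentum equation and requires, besides $g_3\in L^2$, the weighted initial estimates $\|\xi_0\nabla^2\phi_0\|_{L^2}$, $\|\xi_0\nabla^2u_0\|_{L^2}$ and $\|\xi_0\nabla(\psi_0\cdot Q(u_0))\|_{L^2}$ as in (\ref{key-phi})--(\ref{key-2u}); your appeal to $g_3\in L^2$ alone is not enough. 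The remaining parts of your outline (basic $L^2$ energy, $u_t$‑energy with $g_2$ fixing $u_t(0)$, elliptic regularity through Lemma \ref{Appendix} at $s=0,1$ and for $L(\xi u_t)$) do match the paper's scheme, but without the $\xi u_t$ and $\xi u_{tt}$ multipliers the weighted pointwise bounds that the lemma actually claims are not reached.
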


\begin{proof}
	Rewriting  the  equation of  $(\ref{linear})_2$ as
	\begin{equation}\label{equ-u-linear}
		u_{t}+a\xi\left(\alpha\mbox{curl}^2u-(2\alpha+\beta)\nabla\mbox{div}u\right)=-v\cdot\nabla u-\nabla\phi+\psi\cdot Q(u),
	\end{equation}
	with the boundary conditions:
	\begin{equation}\label{boundary-u}
		u\cdot n=0,\quad  \mbox{curl}u\times n=-K(x)u,\quad \mbox{on}\quad \partial\Omega.
	\end{equation}
	{\it \bf {Step 1.}} We will first calculate $\|u\|_{L^2}$.
	Multiplying the equation (\ref{equ-u-linear}) by $u$,  integrating the result identity over $\Omega$, and  performing integration by parts, we obtain
	\begin{align}\label{1orderu}
		&\frac{1}{2}\frac{d}{dt}\|u\|_{L^2}^2+a\int_{\Omega}\xi \left(\alpha|\mbox{curl}u|^2+(2\alpha+\beta)|\mbox{div}u|^2\right)
-a\alpha\int_{\partial\Omega}\xi(\mbox{curl}u\times n)\cdot u\nonumber\\
		&=-a\int_{\Omega}\alpha \mbox{curl}u\cdot (\nabla\xi\times u)+(2\alpha+\beta) \mbox{div}u (u\cdot\nabla\xi )
+\int_{\Omega} \left(-v\cdot\nabla u-\nabla\phi+\psi\cdot Q(u)\right)\cdot u\nonumber\\
&\leq \varepsilon_1\|\sqrt{\xi}\nabla u\|^2_{L^2}
+C(\varepsilon_1)\|\xi^{-1}\|_{L^\infty}\left(\|\psi\|_{L^\infty}^2
+\|v\|^2_{L^\infty}\right)\|u\|_{L^2}^2
+C(\|u\|_{L^2}^2+\|\nabla\phi\|^2_{L^2}),
	\end{align}
where we have used $\psi=\frac{a\delta}{\delta-1}\nabla\xi$.

Note that the boundary integral on the left-hand side
\begin{align*}
-a\alpha\int_{\partial\Omega}\xi(\mbox{curl}u\times n)\cdot u=a\alpha\int_{\partial\Omega}\xi K(x)u\cdot u
\end{align*}
 is a good term provided $K(x)$ is positive.
Otherwise,  we can use (\ref{est-boundary}) to estimate the boundary  integral as follows,
\begin{align}\label{K-2}
&-a\alpha\int_{\partial\Omega}\xi(\mbox{curl}u\times n)\cdot u=a\alpha\int_{\partial\Omega}\xi K(x)u\cdot u\nonumber\\[2mm]
&\leq C\|K\|_{L^{\infty}} \|\nabla(\sqrt{\xi}u)\|^2_{L^2}\nonumber\\[2mm]
&\leq \varepsilon_0\|\sqrt{\xi}\nabla u\|^2_{L^2}+C\|\nabla\sqrt{\xi}\|^2_{L^{\infty}}\|u\|^2_{L^2},
\end{align}
for some small constant $\varepsilon_0>0$ due to the smallness of $K(x)$.

	Noting that
\begin{align}\label{bound}
	\|\sqrt{\xi} \nabla u\|^2_{L^2}\leq C \left(\|\sqrt{\xi}\mbox{curl}u\|^2_{L^2}+\|\sqrt{\xi}\mbox{div}u\|^2_{L^2}
+\|\nabla\sqrt{\xi}u\|^2_{L^2}
\right)
\end{align}
where we  apply the fact $\|\nabla v\|_{L^2}^2\leq C(\|\mbox{div}v\|_{L^2}^2
+\|\mbox{curl}v\|_{L^2}^2)$ with $v\cdot n|_{\partial\Omega}=0$.

Substituting  (\ref{bound}) into (\ref{1orderu}), and choosing $\varepsilon_1$ small implies
\begin{align*}
	\frac{d}{dt}\|u\|^2_{L^2}+\|\sqrt{\xi}\nabla u\|^2_{L^2}
\leq C\left(1+\|\xi^{-1}\|_{L^\infty}\|\psi\|_{L^\infty}^2
+\|\xi^{-1}\|_{L^\infty}\|v\|^2_{L^\infty}\right)\|u\|^2_{L^2}+C\|\nabla\phi\|^2_{L^2}.
\end{align*}
By using the Gronwall's inequality, it is clear that
\begin{align}\label{8}
	\|u(t)\|_{L^2}^2+\int_0^t\|\sqrt{\xi}\nabla u\|_{L^2}^2ds\leq Cc_0^2,\quad
\mbox{for} \quad 0\leq t\leq T_2.
\end{align}

\vspace{2mm}

{\it {\bf Step 2.}} To estimate $\|\xi\nabla u\|_{L^2}$.
We multiply the equation (\ref{equ-u-linear}) by $\xi u_t$, integrate the resulting identity over $\Omega$ to demonstrate
\begin{align}\label{xiut}
	\int_{\Omega}\xi u_t^2+a\int_{\Omega}\xi^2
	\left(\alpha\mbox{curl}^2u-(2\alpha+\beta)\nabla\mbox{div}u\right)
	\cdot u_t
	=
	\int_{\Omega}\xi \left(-v\cdot\nabla u-\nabla \phi +
	 \psi \cdot Q(u)\right)\cdot u_t.
	\end{align}
Using integration by parts, the right-hand side of the above becomes
\begin{align}\label{step2-1}
	\int_{\Omega}&\xi^2
	\left(\alpha\mbox{curl}^2u-(2\alpha+\beta)\nabla\mbox{div}u\right)\cdot u_t\nonumber\\
	=&\frac{1}{2}\frac{d}{dt}\int_{\Omega}\xi^2
	\left(\alpha|\mbox{curl}u|^2+(2\alpha+\beta)|\mbox{div}u|^2\right)
	-\int_{\Omega}\xi\xi_t
	\left(\alpha|\mbox{curl}u|^2+(2\alpha+\beta)|\mbox{div}u|^2\right)\nonumber\\
	&+\int_{\Omega}\left(\alpha(\nabla\xi^2\times u_t)\cdot \mbox{curl}u
	+(2\alpha+\beta)u_t\cdot \nabla\xi^2\mbox{div}u\right)-\alpha\int_{\partial\Omega} (\mbox{curl}u\times n)\cdot (\xi^2u_t).
\end{align}
The boundary term in (\ref{step2-1}) becomes
\begin{align}
-\int_{\partial\Omega} (\mbox{curl}u\times n)\cdot (\xi^2u_t)
=\frac{1}{2}\frac{d}{dt}\int_{\partial\Omega}\xi^2 K(x)u\cdot u- \int_{\partial\Omega}\xi\xi_t K(x)u\cdot u. \nonumber
\end{align}
Then, putting these  into (\ref{xiut}), one has
\begin{align}\label{u-0}
	&\frac{a}{2}\frac{d}{dt}\int_{\Omega}\xi^2
	\left(\alpha|\mbox{curl}u|^2+(2\alpha+\beta)|\mbox{div}u|^2\right)
+\frac{\alpha}{2}\frac{d}{dt}\int_{\partial\Omega}\xi^2 K(x)u\cdot u
+\int_{\Omega}\xi u_t^2\nonumber\\
	=&\alpha\int_{\partial\Omega}\xi\xi_t K(x)u\cdot u
     +a\int_{\Omega}\xi\xi_t
	\left(\alpha|\mbox{curl}u|^2+(2\alpha+\beta)|\mbox{div}u|^2\right)\nonumber\\
	&-a\int_{\Omega}\left(\alpha(\nabla\xi^2\times u_t)\cdot \mbox{curl}u
	+(2\alpha+\beta)u_t\cdot \nabla\xi^2 \mbox{div}u\right)\nonumber\\
	&-\int_{\Omega}\xi v\cdot\nabla u\cdot u_t-\int_{\Omega}\xi \nabla \phi \cdot u_t
+\int_{\Omega}\xi \psi \cdot Q(u)\cdot u_t
	\equiv \sum_{j=1}^{6}R_j.
\end{align}

In view of (\ref{est-boundary}), the boundary integral can be estimated as follows:
\begin{align}\label{step2-R1}
	R_1&\leq C\|\xi_t\|_{L^{\infty}}\int_{\partial\Omega}\xi u^2\leq C\|\xi_t\|_{L^{\infty}} \|\nabla(\sqrt{\xi}u)\|^2_{L^{2}}\nonumber\\
&\leq C\left(\|\xi_t\|_{L^\infty}\|\xi^{-1}\|_{L^\infty}\|\xi\nabla u\|_{L^2}^2+\|\xi_t\|_{L^\infty}\|\nabla\sqrt{\xi}\|^2_{L^\infty}\|u\|_{L^2}^2\right).
\end{align}
By applying Cauchy's inequality, we obtain
\begin{align}\label{step2-R2}
	R_2&\leq C \|\xi_t\|_{L^\infty}\|\xi^{-1}\|_{L^\infty}\|\xi\nabla u\|_{L^2}^2
	,\nonumber\\
	R_3&\leq C \int_{\Omega}\xi|\nabla\xi| |u_t||\nabla u|
	\leq \frac{1}{16}\|\sqrt{\xi}u_t\|^2_{L^2}+C\|\nabla\xi\|^2_{L^\infty}\|\xi^{-1}\|_{L^\infty}
	\|\xi\nabla u\|^2_{L^2},\nonumber\\
	R_4&\leq \frac{1}{16}\|\sqrt{\xi}u_t\|^2_{L^2}+C\|v\|^2_{L^\infty}\|\xi^{-1}\|_{L^\infty}\|\xi\nabla u\|^2_{L^2}
,\nonumber\\
	R_6&\leq \frac{1}{16}\|\sqrt{\xi}u_t\|^2_{L^2}
+C\|\psi\|^2_{L^\infty}\|\xi^{-1}\|_{L^\infty}\|\xi\nabla u\|^2_{L^2}.
\end{align}
For term $R_5$,  integrating by parts  yields
\begin{align}\label{step2-R5}
	R_5=&\int_{\Omega}\mbox{div}(\xi  u_t) (\phi-\phi^{\infty})
	=\int_{\Omega}\xi\mbox{div}u_t (\phi-\phi^{\infty})+\int_{\Omega}u_t\cdot\nabla\xi (\phi-\phi^{\infty})\nonumber\\
	\leq& \frac{d}{dt}\int_{\Omega}\xi\mbox{div}u (\phi-\phi^{\infty})+  \frac{1}{16}\|\sqrt{\xi}u_t\|^2_{L^2}+C\|\xi\nabla u\|^2_{L^2}\nonumber\\
+& C\left(\|\xi^{-1}\|^2_{L^\infty}\|\xi_t\|^2_{L^\infty}\|\phi-\phi^{\infty}\|^2_{L^2}+\|\phi_t\|^2_{L^2}
	+ \|\xi^{-1}\|_{L^\infty}\|\nabla \xi\|^2_{L^\infty} \|\phi-\phi^{\infty}\|^2_{L^2}\right).
\end{align}
Putting (\ref{step2-R1})-(\ref{step2-R5}) into (\ref{u-0}), we have
\begin{align}\label{step2-2}
	 &\frac{a}{2}\frac{d}{dt}\int_{\Omega}\xi^2\left(\alpha|\mbox{curl}u|^2+(2\alpha+\beta)|\mbox{div}u|^2\right)
+\frac{\alpha}{2}\frac{d}{dt}\int_{\partial\Omega}\xi^2 K(x)u\cdot u
+\frac{1}{2}\|\sqrt{\xi} u_t\|^2_{L^2}
	\nonumber\\
&\leq \frac{d}{dt}\int_{\Omega}\xi\mbox{div}u (\phi-\phi^{\infty})+C\|\xi_t\|_{L^\infty}\|\nabla\sqrt{\xi}\|^2_{L^\infty}\|u\|_{L^2}^2
\nonumber\\
&+C\left(1+\|\xi_t\|_{L^\infty}\|\xi^{-1}\|_{L^\infty}
+\|v\|^2_{L^\infty}\|\xi^{-1}\|_{L^\infty}
+\|\psi\|^2_{L^\infty}\|\xi^{-1}\|_{L^\infty}\right)\|\xi\nabla u\|^2_{L^2}\nonumber\\
&+C\left(\|\xi^{-1}\|^2_{L^\infty}\|\xi_t\|^2_{L^\infty}\|\phi-\phi^{\infty}\|^2_{L^2}+\|\phi_t\|^2_{L^2}
	+ \|\xi^{-1}\|_{L^\infty}\|\nabla \xi\|^2_{L^\infty} \|\phi-\phi^{\infty}\|^2_{L^2}\right).
\end{align}
Integrating (\ref{step2-2}) over $s\in[0,t]$, for $0\leq t\leq T_2$, we get
\begin{align}\label{u-1}
	&\int_{\Omega}\xi^2 \left(\alpha|\mbox{curl}u|^2+(2\alpha+\beta)|\mbox{div}u|^2\right)
+\int_{\partial\Omega}\xi^2 K(x)u\cdot u
	+\int_0^t\|\sqrt{\xi} u_t\|^2_{L^2}ds\nonumber\\
	&\leq C\|\xi\nabla u\|_{L^2}^2(0)+C\int_{\partial\Omega}\xi_0^2 K(x)u_0\cdot u_0+\int_{\Omega}\xi\mbox{div}u (\phi-\phi^{\infty})-\int_{\Omega}\xi_0\mbox{div}u_0 (\phi_0-\phi^{\infty})
\nonumber\\
&+Cc_0c_4^2\int_0^t \|\xi\nabla u\|^2_{L^2}ds + Cc_0^4c_3^3c_4t.
\end{align}
By using (\ref{est-boundary}), we have
 \begin{align*}
 \int_{\partial\Omega}\xi_0^2 K(x)u_0\cdot u_0\leq C\|\nabla(\xi_0 u_0)\|^2_{L^2}
 \leq C\|\xi_0 \nabla u_0\|^2_{L^2}
 +\|\nabla\xi_0\|^2_{L^{\infty}}\|u_0\|^2_{L^2}
 \leq Cc_0^4,
 \end{align*}
where we have used  the initial compatibility condition $\|\xi_0\nabla u_0\|_{L^2}=\|g_1\|_{L^2}\leq c_0$.
By using the Cauchy's inequality, one has
 \begin{align*}
 \int_{\Omega}\xi\mbox{div}u (\phi-\phi^{\infty})\leq \frac{2\alpha+\beta}{2} \|\xi\mbox{div}u\|^2_{L^2}+C\|\phi-\phi^{\infty}\|^2_{L^2}.
 \end{align*}
Moreover, note that
\begin{align*}
	\int_{\Omega}\xi^2 |\nabla u|^2\leq C\int_{\Omega}\xi^2 \left(|\mbox{curl}u|^2+|\mbox{div}u|^2\right)+C\int_{\Omega}
	|\nabla\xi|^2|u|^2,
\end{align*}
then, it is easy to see that (\ref{u-1}) yields
 \begin{align*}
	\|\xi\nabla u\|^2_{L^2}+\int_0^t\|\sqrt{\xi} u_t\|^2_{L^2}ds
	\leq Cc_0^4+Cc_0c_4^2\int_0^t \|\xi\nabla u\|^2_{L^2}ds + Cc_0^4c_3^3c_4t,
\end{align*}
 which, along with the Gronwall's inequality, implies that for $0\leq t\leq T_2$,
\begin{align}\label{9}
		\|\xi\nabla u(t)\|_{L^2}^2+\int_0^t\|\sqrt{\xi} u_t\|_{L^2}^2ds	
\leq C(c_0^4+c_0^4c_3^3c_4t)\exp(Cc_0c_4^2t)
\leq C c_0^4,
\end{align}
and \begin{align}
\|\nabla u(t)\|_{L^2}^2\leq C c_0^6,\quad
\int_0^t\| u_t\|_{L^2}^2ds	
\leq C c_0^5.
\end{align}
Furthermore, from the equation $(\ref{linear})_2$,  we have
\begin{align}
 aL(\xi u)=-u_{t}-v\cdot\nabla u-\nabla\phi+ \psi\cdot Q(u)
 - G(\nabla\xi, u),
 \end{align}
with the boundary conditions:
\begin{align*}
\xi u\cdot n=0,\quad \mbox{curl} (\xi u)\times n=(\nabla\xi \cdot n)u-K(x)\xi u\quad \mbox{on}\quad \partial\Omega.
\end{align*}
Applying the elliptic estimates as in Lemma \ref{Appendix},  we deduce that for $0\leq t\leq T_2$
\begin{align}\label{10}
		\|\nabla^{2}(\xi u)\|^2_{L^2}&\leq C\|u_{t}\|_{L^2}^2+C\|v\cdot\nabla u+\nabla\phi- \psi\cdot Q(u)
 + G(\nabla\xi, u)\|^2_{L^2}\nonumber\\
 &+C(\|\nabla^2\xi\|^2_{H^1}+\|\nabla\xi\|^2_{L^\infty})\|u\|^2_{H^{1}}+C\| \xi\nabla u\|_{L^2}^2\nonumber\\
 &\leq C\|u_{t}\|_{L^2}^2+C c_0^6c_2c_3.
\end{align}
Then it holds that
\begin{align}
		\int_0^t\|\nabla^{2}(\xi u)\|^2_{L^2}ds&\leq Cc_0^5,  \quad \int_0^t\|\xi\nabla^{2}u\|^2_{L^2}ds\leq Cc_0^5,
\quad \int_0^t\|\nabla^{2}u\|^2_{L^2}ds\leq Cc_0^7,
\end{align}
for $0\leq t\leq T_2$.

{\bf Step 3.} To estimate $\|\xi\nabla^2 u\|_{L^2}$.  Taking the derivative of (\ref{equ-u-linear}) with respect to $t$, and multiplying the result by $u_{t}$,  integrating by parts with the boundary conditions
$u_t\cdot n|_{\partial\Omega}=0$ and $\mbox{curl}u_t\times n=-K(x)u_t$, it results in
\begin{align}\label{step3-2}
&\frac{1}{2}\frac{d}{dt}\|u_t\|^2_{L^2}+a\int_{\Omega}\xi
\left(\alpha|\mbox{curl}u_t|^2+(2\alpha+\beta)|\mbox{div}u_t|^2\right)
+a\alpha\int_{\partial\Omega}\xi K(x)u_t\cdot u_t\nonumber\\
&=-a\int_{\Omega}\alpha\mbox{curl}u_t\cdot(\nabla\xi\times u_t)+(2\alpha+\beta)\mbox{div}u_t(u_t\cdot\nabla\xi) \nonumber\\
&-a\int_{\Omega}\xi_t Lu\cdot u_t
-\int_{\Omega}(v\cdot \nabla u)_t\cdot u_t -\int_{\Omega}\nabla \phi_t\cdot u_t+\int_{\Omega}(\psi\cdot Q(u))_t\cdot u_t\equiv \sum_{j=1}^5 J_{j}.
\end{align}
Now the terms on the right-hand side of (\ref{step3-2}) can be estimated as follows:
\begin{align}
J_1&\leq \varepsilon_2 \|\sqrt{\xi}\nabla u_t\|_{L^2}^2+C(\varepsilon_2)\|\xi^{-\frac
{1}{2}}\|^2_{L^\infty}\|\nabla\xi\|^2_{L^\infty}\| u_t\|_{L^2}^2,\nonumber\\
J_2&\leq \|\xi_t\|_{L^{\infty}}\|\nabla^2u\|_{L^2}\|u_t\|_{L^2}\leq \|\xi_t\|^2_{L^{\infty}}\|u_t\|^2_{L^2}+C\|\nabla^2u\|^2_{L^2},\nonumber\\
J_3 &\leq
 \varepsilon_2\|\sqrt{\xi}\nabla u_t\|^2_{L^2}+C(\varepsilon_2)(\|\xi^{-\frac{1}{2}}\|^2_{L^{\infty}}\|v\|^2_{L^\infty}
 +\|v_t\|^2_{L^3})\|u_t\|^2_{L^2}+C\|\nabla u\|^2_{L^6},\nonumber\\
J_4&\leq \|u_t\|^2_{L^2}+\|\nabla \phi_t\|^2_{L^2},\nonumber\\
J_5&\leq \varepsilon_2\|\sqrt{\xi}\nabla u_t\|^2_{L^2}+C(\varepsilon_2)
(\|\xi^{-\frac{1}{2}}\|^2_{L^\infty}\|\psi\|^2_{L^\infty})\|u_t\|^2_{L^2}+C\|\psi_t\|^2_{L^3}\|u_t\|^2_{L^2}
+C\|\nabla u\|^2_{L^6}.
\end{align}
Putting above estimates  into (\ref{step3-2}), and  using
\begin{align*}
	\|\sqrt{\xi} \nabla u_t\|^2_{L^2}
\leq  C \left(\|\sqrt{\xi}\mbox{curl}u_t\|^2_{L^2}+\|\sqrt{\xi}\mbox{div}u_t\|^2_{L^2}
+\|\xi^{-1}\|_{L^{\infty}}\|\psi\|^2_{L^{\infty}}\|u_t\|^2_{L^2}
\right),
\end{align*}
choosing $\varepsilon_2$ small,  we have
\begin{align}\label{step3-3}
&\frac{1}{2}\frac{d}{dt}\|u_t\|^2_{L^2}+\|\sqrt{\xi}\nabla u_t\|^2_{L^2}
+a\alpha\int_{\partial\Omega}\xi Ku_t\cdot u_t
\nonumber\\
&\leq C\left(1+\|\xi^{-\frac{1}{2}}\|^2_{L^\infty}\|v\|^2_{L^\infty}+\|v_t\|^2_{L^3}+\|\xi_t\|^2_{L^{\infty}}
+\|\xi^{-\frac{1}{2}}\|^2_{L^\infty}\|\psi\|^2_{L^\infty}+\|\psi_t\|^2_{L^3}\right)
\|u_t\|^2_{L^2}\nonumber\\
&+C\left(\|\nabla \phi_t\|^2_{L^2}+\|\nabla^2u\|^2_{L^2}\right).
\end{align}
Integrating (\ref{step3-3}) over $(\tau, t)$ ($\tau\in(0, t)$) and using Young's inequality, one has for $0<t\leq T_2$
\begin{align}\label{step3-4}
\|u_t(t)\|^2_{L^2}+\int_\tau^t\|\sqrt{\xi}\nabla u_t\|^2_{L^2}ds
\leq \|u_t(\tau)\|^2_{L^2}+Cc_4^4\int_0^t\|u_t\|^2_{L^2}ds+C(c_0^2c_3^2t+c_0^7).
\end{align}
It follows from the momentum equations that
\begin{align*}
\|u_t(\tau)\|_{L^2}\leq \| \left(-v\cdot\nabla u-\nabla\phi -a\xi Lu+\psi\cdot Q(u)\right)(\tau)\|_{L^2},
\end{align*}
which, along with the assumption (\ref{c0}) implies that
\begin{align*}
 \limsup_{\tau\rightarrow0}\|u_t(\tau)\|_{L^2}
 &\leq C \left(\|u_0\|_{L^{\infty}}\|\nabla u_0\|_{L^2}+\|\nabla\phi_0\|_{L^2} +\|g_2\|_{L^2}+\|\psi_0\|_{L^{\infty}}\| \nabla u_0\|_{L^2}\right)\nonumber\\
 &\leq C c_0^{2}.
\end{align*}
Letting $\tau\rightarrow 0$ in (\ref{step3-4}), we get from the Gronwall's inequality that for $0\leq t\leq T_2$
\begin{align}\label{Step3-5}
&\|u_t(t)\|^2_{L^2}+\int_0^t\|\sqrt{\xi}\nabla u_t\|^2_{L^2}ds
\leq C\left(c_0^2c_3^2t+c_0^7\right)\exp(Cc_4^4t)
\leq Cc_0^7,\nonumber\\
&\int_0^t\|\nabla u_t\|^2_{L^2}ds\leq  Cc_0^8.
\end{align}
Recalling the elliptic estimate as in (\ref{10}), we obtain that
\begin{align*}
\|\nabla^2(\xi u)\|^2_{L^2}\leq Cc_0^6c_2c_3,
\end{align*}
and
\begin{align}\label{Step3-0}
\|\xi\nabla^2u\|^2_{L^2}&\leq C \left(\|\nabla^2(\xi u)\|^2_{L^2}+\|\nabla\xi\nabla u\|^2_{L^2}
+\|\nabla^2\xi u\|^2_{L^2}\right)\nonumber\\
&\leq Cc_0^6c_2c_3,\quad \mbox{for}\ 0\leq t\leq T_2.
\end{align}
Furthermore, employing the  elliptic estimate  as in Lemma \ref{Appendix} once more, we can find that
\begin{align}\label{ell-u3}
		\|\xi u\|^2_{D^{3}}\leq &C\|u_t\|^2_{H^{1}}+C\|-v\cdot\nabla u-\nabla\phi+ \psi\cdot Q(u)
 - G(\nabla\xi, u)\|^2_{H^{1}}\nonumber\\[2mm]
		&+C(\|\nabla^2\xi\|^2_{H^1}+\|\nabla\xi\|^2_{L^\infty})\|u\|^2_{H^{2}}+C\| \xi\nabla u\|_{L^2}^2\nonumber\\[2mm]
&\leq C\|\nabla u_t\|^2_{L^{2}}+Cc_0^8c_3^4.
		\end{align}
Therefore,  it holds for $0\leq t\leq T_2$,
\begin{align*}
\int_0^t(\|\xi \nabla^3u\|^2_{L^{2}}+\|\xi \nabla^2u\|^2_{D^{1}}+\|\xi u\|^2_{D^{3}})ds\leq Cc_0^8.
\end{align*}

{\bf {Step 4.}}  Estimate on $\|\nabla^3u\|_{L^2}$.\\
Taking $\int_{\Omega}\partial_{t}(\ref{equ-u-linear})\cdot \xi u_{tt}$, we have
\begin{align}\label{Step4-1}
&\int_{\Omega}\xi u_{tt}^2+a\int_{\Omega}\xi^2 Lu_t
\cdot u_{tt}\nonumber\\
&=-a\int_{\Omega}\xi\xi_t Lu\cdot u_{tt}
-\int_{\Omega}\xi (v\cdot\nabla u)_t\cdot u_{tt}-\int_{\Omega}\xi \nabla \phi_t \cdot u_{tt}+
\int_{\Omega}\xi (\psi\cdot Q(u))_{t}\cdot u_{tt}.
 \end{align}
Using integration by parts, the second term in left-hand side of (\ref{Step4-1}) becomes
\begin{align*}
&\int_{\Omega}\xi^2
Lu_t\cdot u_{tt}
=\int_{\Omega}\xi^2
\left(\alpha\mbox{curl}^2u_t-(2\alpha+\beta)\nabla\mbox{div}u_t\right)\cdot u_{tt}\nonumber\\
&=\frac{1}{2}\frac{d}{dt}\int_{\Omega}\xi^2
\left(\alpha|\mbox{curl}u_t|^2+(2\alpha+\beta)|\mbox{div}u_t|^2\right)
-\int_{\Omega}\xi\xi_t
\left(\alpha|\mbox{curl}u_t|^2+(2\alpha+\beta)|\mbox{div}u_t|^2\right)\nonumber\\
&+\int_{\Omega}\left(\alpha(\nabla\xi^2\times u_{tt})\cdot \mbox{curl}u_t
+(2\alpha+\beta)u_{tt}\cdot \nabla\xi^2 \mbox{div}u_t\right)
-\alpha\int_{\partial\Omega}\xi^2 (\mbox{curl}u_t\times n)\cdot u_{tt}.
\end{align*}
The boundary term on above can be handle as follows,
\begin{align*}
-\int_{\partial\Omega}\xi^2 (\mbox{curl}u_t\times n)\cdot u_{tt}
=\frac{1}{2}\frac{d}{dt}\int_{\partial\Omega}\xi^2 K(x)u_t\cdot u_t- \int_{\partial\Omega}\xi\xi_t K(x)u_t\cdot u_t.
\end{align*}
Putting all the above into (\ref{Step4-1}), we get
\begin{align}\label{Step4-2}
&\frac{a}{2}\frac{d}{dt}\int_{\Omega}\xi^2
\left(\alpha|\mbox{curl}u_t|^2+(2\alpha+\beta)|\mbox{div}u_t|^2\right)
+\frac{a\alpha}{2}\frac{d}{dt}\int_{\partial\Omega}\xi^2 Ku_t\cdot u_t
+\int_{\Omega}\xi u_{tt}^2\nonumber\\
&=a\alpha\int_{\partial\Omega}\xi\xi_t K(x)u_t\cdot u_t
+a\int_{\Omega}\xi\xi_t
\left(\alpha|\mbox{curl}u_t|^2+(2\alpha+\beta)|\mbox{div}u_t|^2\right)\nonumber\\
&-a\int_{\Omega}\left(\alpha(\nabla\xi^2\times u_{tt})\cdot \mbox{curl}u_t
+(2\alpha+\beta)u_{tt}\cdot \nabla\xi^2 \mbox{div}u_t\right)\nonumber\\
&-a\int_{\Omega}\xi\xi_t Lu\cdot u_{tt}
-\int_{\Omega}\xi (v\cdot\nabla u)_t\cdot u_{tt}-\int_{\Omega}\xi u_{tt}\cdot \nabla\phi_t\nonumber\\
&+\int_{\Omega}\xi (\psi\cdot Q(u))_{t}\cdot u_{tt}
 \equiv \sum_{j=1}^7 I_j.
\end{align}
We proceed to estimate each term individually.
For the boundary integral, in view of (\ref{est-boundary}), we have
\begin{align*}
I_1
&\leq C\|\xi_t\|_{L^{\infty}}\int_{\partial\Omega}\xi u_t\cdot u_t
\leq C\|\xi_t\|_{L^{\infty}} \|\nabla(\sqrt{\xi}u_t)\|^2_{L^{2}}\nonumber\\
&\leq C\left(\|\xi_t\|_{L^\infty}\|\xi^{-1}\|_{L^\infty}\|\xi\nabla u_t\|_{L^2}^2+\|\xi_t\|_{L^\infty}\|\nabla\sqrt{\xi}\|^2_{L^\infty}\|u_t\|_{L^2}^2\right).
\end{align*}
Using the Young's inequality and the Sobolev inequality, it holds that
\begin{align*}
I_2&\leq C \|\xi_t\|_{L^\infty}\|\xi^{-1}\|_{L^\infty}\|\xi\nabla u_t\|^2_{L^2},\nonumber\\
I_3&\leq C \int_{\Omega}\xi|\nabla\xi| |u_{tt}||\nabla u_t|
\leq \frac{1}{16}\|\sqrt{\xi}u_{tt}\|_{L^2}^2+C\|\nabla\xi\|^2_{L^\infty}
\|\xi^{-1}\|_{L^\infty}\|\xi\nabla u_t\|_{L^2}^2,\nonumber\\
I_4&\leq \frac{1}{16}\|\sqrt{\xi}u_{tt}\|_{L^2}^2+C\|\xi_t\|^2_{L^\infty}\|\sqrt{\xi}\nabla^2 u\|_{L^2}^2,\nonumber\\
I_5&\leq \frac{1}{16}\|\sqrt{\xi}u_{tt}\|_{L^2}^2+ C\left(\| v_t\|^2_{L^3}\|\sqrt{\xi} \nabla u\|_{L^6}^2+\|\xi^{-1}\|_{L^{\infty}}\|v\|^2_{L^\infty}\|\xi\nabla u_t\|_{L^2}^2\right),
\end{align*}
and
\begin{align*}
I_7
\leq\frac{1}{16}\|\sqrt{\xi}u_{tt}\|_{L^2}^2+C\left(\|\psi_t\|_{L^3}^2\|\sqrt{\xi}\nabla u\|_{L^6}^2+
\|\xi^{-\frac{1}{2}}\|^2_{L^{\infty}}\|\psi\|_{L^\infty}^2\|{\xi}\nabla u_t\|_{L^2}^2\right).
\end{align*}
For the term $I_6$, we should hand carefully due to the weight function $\xi$. We proceed with the detailed estimate as follows,
\begin{align}\label{Step4-R_6}
I_6&\equiv -\int_{\Omega}\xi u_{tt}\cdot\nabla \phi_t  =\int_{\Omega}\phi_t \mbox{div}(\xi u_{tt})=\int_{\Omega}\xi\phi_t \mbox{div}u_{tt}+\int_{\Omega}\phi_t u_{tt}\cdot\nabla\xi
\nonumber\\
&=\frac{d}{dt}\int_{\Omega}\xi\phi_t \mbox{div}u_{t}-\int_{\Omega}\xi_t\phi_t \mbox{div}u_{t}-\int_{\Omega}\xi\phi_{tt} \mbox{div}u_{t}+\int_{\Omega}\phi_t u_{tt}\cdot\nabla\xi\nonumber\\
&\leq \frac{d}{dt}\int_{\Omega}\xi\phi_t \mbox{div}u_{t}
+\frac{1}{16}\|\sqrt{\xi}u_{tt}\|_{L^2}
+C\left(\|\phi_t\|^2_{L^2}+\|\phi_{tt}\|^2_{L^2}\right)\nonumber\\
&\quad +C\left(\|\xi^{-1}\|^2_{L^{\infty}}\|\xi_t\|^2_{L^{\infty}}\|\xi\nabla u_{t}\|^2_{L^2}
 +\|\xi\nabla u_{t}\|^2_{L^2}
+\|\nabla\xi\|^2_{L^{\infty}}\|\xi^{-\frac{1}{2}}\|^2_{L^{\infty}}\|\phi_t\|^2_{L^2}
\right).
\end{align}
Putting  the terms $I_i$ ($i=1,2,\cdots 7$) into (\ref{Step4-2}), we have
\begin{align}\label{Step4-3}
&\frac{a}{2}\frac{d}{dt}\int_{\Omega}\xi^2
\left(\alpha|\mbox{curl}u_t|^2+(2\alpha+\beta)|\mbox{div}u_t|^2\right)
+\frac{a\alpha}{2}\frac{d}{dt}\int_{\partial\Omega}\xi^2 Ku_t\cdot u_t
+\frac{1}{2}\int_{\Omega}\xi u_{tt}^2\nonumber\\
&\leq \frac{d}{dt}\int_{\Omega}\xi\phi_t \mbox{div}u_{t}+ C\|\xi_t\|_{L^\infty}\|\nabla\sqrt{\xi}\|^2_{L^\infty}\|u_t\|_{L^2}^2\nonumber\\
&+C\left(1+\|\xi_t\|_{L^\infty}\|\xi^{-1}\|_{L^\infty}+\|\nabla\xi\|^2_{L^\infty}
\|\xi^{-1}\|_{L^\infty}+\|\xi^{-1}\|_{L^{\infty}}\|v\|^2_{L^\infty}\right)\|\xi\nabla u_t\|_{L^2}^2
\nonumber\\
&+C\left(\|\xi^{-1}\|^2_{L^{\infty}}\|\xi_t\|^2_{L^{\infty}}\|\xi\nabla u_t\|_{L^2}^2+\|\xi_t\|^2_{L^\infty}\|\sqrt{\xi}\nabla^2 u\|_{L^2}^2\right)\nonumber\\
&+C\left(
\| v_t\|^2_{L^3}\|\sqrt{\xi} \nabla u\|_{L^6}^2+\|\psi_t\|_{L^3}^2\|\sqrt{\xi}\nabla u\|_{L^6}^2\right)\nonumber\\
&+C\left(\|\phi_t\|^2_{L^2}
+\|\phi_{tt}\|^2_{L^2}
+\|\nabla\xi\|^2_{L^{\infty}}\|\xi^{-\frac{1}{2}}\|^2_{L^{\infty}}\|\phi_t\|^2_{L^2}\right).
\end{align}
Integrating (\ref{Step4-3}) over $(\tau, t)$ ($\tau\in (0,t)$),  it holds that for $0< t\leq T_2$,
 \begin{align}\label{Step4-4}
&\|\xi\nabla u_t(t)\|_{L^2}^2+\int_{\partial\Omega}\xi^2 Ku_t\cdot u_t+\int_{\tau}^t\|\sqrt{\xi} u_{tt}\|_{L^2}^2ds\nonumber\\
&\leq C\|\xi\nabla u_t(\tau)\|_{L^2}^2+C\int_{\partial\Omega}\xi^2 Ku_t\cdot u_t(\tau)
+Cc_0^2c_4^4\int_0^t\|\xi\nabla u_t\|_{L^2}^2ds+
Cc_0^7c_4^6t+Cc_0^7c_2^2,
\end{align}
where we have used
\begin{align*}
\int_{\Omega}\xi\phi_t \mbox{div}u_{t}
\leq \varepsilon\|\xi\nabla u_t\|_{L^2}^2+C\|\phi_t\|_{L^2}^2\leq \varepsilon\|\xi\nabla u_t\|_{L^2}^2+Cc_0^2c_2^2,
\end{align*} for any small constant $\varepsilon>0$.

 From the momentum equations, we deduce that
\begin{align*}
 \|\xi\nabla u_t(\tau)\|_{L^2}\leq \|\xi\nabla \left(-v\cdot\nabla u-\nabla\phi -a\xi Lu+\psi\cdot Q(u)\right)(\tau)\|_{L^2}.
\end{align*}
Then by the assumption (\ref{c0}), (\ref{key-phi}) and (\ref{key-2u}), one has
\begin{align*}
 &\limsup_{\tau\rightarrow0}\|\xi\nabla u_t(\tau)\|_{L^2}\nonumber\\
 &\leq C \left(\|\xi_0\nabla(u_0\cdot\nabla u_0)\|_{L^2}+\|\xi_0\nabla^2\phi_0\|_{L^2}+\|g_3\|_{L^2}
 +\|\xi_0\nabla(\psi_0\cdot Q(u_0))\|_{L^2}\right)\nonumber\\
 &\leq C c_0^{3},
\end{align*}
and
 \begin{align*}
 \limsup_{\tau\rightarrow0}\int_{\partial\Omega}\xi^2 K(x)u_t\cdot u_t(\tau)\leq C\|\nabla(\xi u_t)(0)\|^2_{L^2}
 \leq Cc_0^6.
 \end{align*}
Letting $\tau\rightarrow 0$ in (\ref{Step4-4}) and using the Gronwall's inequality, we  obtain
\begin{align*}
&\|\xi\nabla u_t(t)\|_{L^2}^2+\int_{0}^t\|\sqrt{\xi} u_{tt}\|_{L^2}^2ds
\leq C\left(c_0^6+c_0^7c_2^2+c_0^7c_4^6t\right)\exp(c_0^2c_4^4t)
\leq C c_0^{7}c_2^2,
\end{align*}
and
\begin{align*}
\|\nabla u_t(t)\|_{L^2}^2\leq Cc_0^{9}c_2^2, \quad
\int_{0}^t\| u_{tt}\|_{L^2}^2ds\leq Cc_0^{8}c_2^2,
\end{align*}
for $0\leq t\leq T_3=\min\{T^{*}, (1+c_4)^{-6}\}$.

Recalling the estimate (\ref{ell-u3}), we get
 \begin{align*}
		\|\xi u\|^2_{D^{3}}\leq C c_0^8c_3^4,\quad \|\xi \nabla^3u\|^2_{L^{2}}\leq C c_0^8c_3^4,\quad \mbox{for}\
0\leq t\leq T_2.
		\end{align*}
Moreover, $u_t$ solves the following equations
\begin{align*}
 aL(\xi u_t)
 =-u_{tt}-a\xi_tLu-(v\cdot\nabla u)_{t}-\nabla\phi_t+ (\psi\cdot Q(u))_{t}
 - G(\nabla\xi, u_t)
 \end{align*}
 with the boundary conditions
 \begin{align*}
\xi u_t\cdot n=0,\quad \mbox{curl}(\xi u_t)\times n=-\xi K(u)u_t+(\nabla\xi\cdot n)u_t\quad \mbox{on}\  \partial\Omega.
 \end{align*}
 Applying elliptic estimates as in Lemma \ref{Appendix} to $\xi u_t$, we have
\begin{align*}
		\|\nabla^2(\xi u_t)\|^2_{L^2}&\leq C\|u_{tt}\|^2_{L^2}+C\|\xi_tLu+(v\cdot\nabla u)_{t}+\nabla\phi_t-(\psi\cdot Q(u))_{t}
 + G(\nabla\xi, u_t)\|^2_{L^2}
\nonumber\\
		&+C(\|\nabla^2\xi\|^2_{H^1}+\|\nabla\xi\|^2_{L^\infty})\|u_t\|^2_{1}+C\| \xi\nabla u_t\|_{L^2}^2
\nonumber\\
		&\leq C\|u_{tt}\|^2_{L^2}+Cc_0^8c_3^2c_4^4,
\end{align*}
which yields
\begin{align*}
		&\int_0^t\|\nabla^2(\xi u_t)\|^2_{L^2}ds\leq C\int_0^t\|u_{tt}\|^2_{L^2}ds+Cc_0^8c_3^2c_4^4t
		\leq Cc_0^{8}c_2^2,\\
		&\int_0^t\|(\xi\nabla^2 u)_t\|^2_{L^2}ds\leq Cc_0^{8}c_2^2,
\end{align*}
for $\ 0\leq t\leq T_3.$
This completes the proof of Lemma \ref{lem-u}.
\end{proof}

The following lemma provides a high-order derivative estimate $\int_0^t\|\xi \nabla^2 u\|^2_{D^2}ds$. For initial-boundary value problems, obtaining such weighted  estimates presents significantly greater challenges compared to Cauchy problems. To overcome the difficulty arising from the lack of boundary conditions for $D^2u$,
we adopt an approach based on conormal spaces. Rather than estimating quantity $\|\xi \nabla^2 u\|^2_{D^2}$ directly, we instead develop estimates for  $\|\xi \nabla u\|^2_{D^3}$.
\begin{lem}\label{lem-u-2}
	Let $(\phi,\xi,u)$ be the unique strong solution to (\ref{linear}) on $[0,T]\times \Omega$. Then for $0\leq t\leq T_3$, it holds
\begin{align}\label{step5}
\int_0^t\|\xi \nabla^2 u\|^2_{D^2}ds\leq  Cc_0^{12}c_2^2.
\end{align}
\end{lem}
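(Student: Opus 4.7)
My plan is to follow the strategy the authors outline in the introduction: instead of attacking $\|\xi\nabla^2 u\|_{D^2}$ directly (which would require applying elliptic estimates to $L(\xi D^2 u)$, an equation whose boundary data for $D^2 u$ are not available), I estimate $\|\xi\nabla u\|_{D^3}$ via a conormal decomposition of $\nabla u$ near $\partial\Omega$. Modulo commutator terms of the form $\nabla^j\xi\cdot\nabla^{4-j}u$ for $j=1,2,3$, which are already controlled by Lemmas~\ref{lem-varphi} and \ref{lem-u}, these two quantities are equivalent.

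Concretely, near $\partial\Omega$ I fix a smooth frame $(\tau_1,\tau_2,n)$ and write
\begin{equation*}
\nabla u = \sum_{k=1}^{2}\partial_{\tau_k}u\otimes\tau_k + \bigl((\partial_n u)\cdot n\bigr)\,n\otimes n + (\partial_n u)_\tau\otimes n.
\end{equation*}
For the tangential pieces, I commute a tangential vector field $Z$ into the momentum equation to obtain
\begin{equation*}
aL(\xi Zu) = -Zu_t - Z(v\cdot\nabla u) - Z\nabla\phi + Z(\psi\cdot Q(u)) - G(\nabla\xi,Zu) + [L,Z](\xi u).
\end{equation*}
Because $Z$ is tangent to $\partial\Omega$, the fields $Zu$ satisfy $(Zu)\cdot n = -u\cdot Zn$ and an analogous conormal identity for the curl, so Lemma~\ref{Appendix} applies and controls $\|\nabla^2(\xi Zu)\|_{H^1}$, hence $\|\xi Zu\|_{D^3}$, by the $H^1$ norm of the right-hand side plus $\|\xi\nabla(Zu)\|_{L^2}$ and $\|u\|_{H^2}$-type lower-order terms.

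The two normal pieces are recovered algebraically from $\mathrm{div}\,u$, $\mathrm{curl}\,u$, and tangential derivatives of $u$, using the identities $\partial_n u\cdot n = \mathrm{div}\,u - \mathrm{div}_{\partial\Omega}u_\tau + \text{(curvature)}\cdot u$ and $(\partial_n u)_\tau = (\mathrm{curl}\,u\times n)_\tau + \text{(tangential derivatives of }u_\tau\text{)}$. The third-order weighted norms of $\mathrm{div}\,u$ and $\mathrm{curl}\,u$ come from rewriting $a\xi Lu = -u_t - v\cdot\nabla u -\nabla\phi + \psi\cdot Q(u)$ and using Lemmas \ref{lem-div-curl} and \ref{lem-ell} (after a cutoff argument for the interior, which is purely an elliptic estimate on $L(\xi u)$ with no boundary complications). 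Summing the tangential and normal contributions and using Proposition~\ref{prop-imp} to recombine them yields the $D^3$ bound for $\xi\nabla u$, whence the $D^2$ bound for $\xi\nabla^2 u$ modulo already-controlled commutators.

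The main obstacle is bookkeeping the source term in the commuted equation. The worst contributions are $\|Zu_t\|_{H^1}^2$, which forces the use of $\int_0^t\|\nabla^2 u_t\|^2_{L^2}ds\le Cc_0^{10}c_2^2$ from \eqref{nabla^2u_t}, and the top-order weight commutator $\nabla^3\xi\cdot\nabla u$, handled via $\|\nabla\xi\|_{D^2}\le Cc_0$ from Lemma~\ref{lem-psi} together with the Gagliardo--Nirenberg bound on $\|\nabla u\|_{L^\infty}$ obtained from $\|u\|_{H^3}$. A secondary difficulty is tracking the non-flat boundary commutator $[L,Z]$, which contributes $\xi\nabla^3 u$-type terms integrated in $L^2_t$ by \eqref{nabla^3u}. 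After integration in $s\in[0,t]$ and collecting the powers of $c_0$ and $c_2$ supplied by Lemmas \ref{lem-phi}--\ref{lem-u}, the factor of $\xi^2\le Cc_0^2\varphi^{-2}$ in the elliptic inequality accounts for the additional $c_0^2$ relative to $\int_0^t\|\nabla^2 u_t\|^2_{L^2}ds$, producing the claimed bound $Cc_0^{12}c_2^2$.
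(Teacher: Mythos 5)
Your overall strategy is the same as the paper's: bound $\int_0^t\|\xi\nabla u\|^2_{D^3}ds$ by decomposing $\nabla u$ near $\partial\Omega$ into tangential derivatives, the normal component of $\partial_n u$ (recovered from $\mbox{div}\,u$), and the tangential component of $\partial_n u$ (recovered from the vorticity), treating the tangential piece by commuting a tangential derivative through the momentum equation and applying Lemma \ref{Appendix}, with $\int_0^t\|\nabla^2u_t\|^2_{L^2}ds$ from (\ref{nabla^2u_t}) as the driving term. Up to that point you are reproducing the paper's Steps 1--3 and the general-domain reduction via the projection $\Pi$.

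The genuine gap is in the piece that carries the whole difficulty of the lemma: the top-order boundary estimates for the weighted vorticity, i.e.\ for $\xi(\partial_n u)_\tau$. You claim the third-order weighted norms of $\mbox{div}\,u$ and $\mbox{curl}\,u$ follow from rewriting $a\xi Lu=-u_t-v\cdot\nabla u-\nabla\phi+\psi\cdot Q(u)$ together with Lemmas \ref{lem-div-curl} and \ref{lem-ell}; but Lemma \ref{lem-ell} requires $v\cdot n|_{\partial\Omega}=0$ or $v\times n|_{\partial\Omega}=0$, which fails for $\xi\,\mbox{curl}\,u$ and $\xi\,\mbox{curl}^2u$ under Navier-slip, and Lemma \ref{lem-div-curl} produces boundary traces $|v\cdot n|_{H^{s-1/2}(\partial\Omega)}$ or $|v\times n|_{H^{s-1/2}(\partial\Omega)}$ that your proposal never estimates --- and at the order needed ($s=2$ for $v=\xi\,\mbox{curl}^2u$, or $s=3$ for $v=\xi\,\mbox{curl}\,u$) these traces are exactly where the initial-boundary value problem differs from the Cauchy problem. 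The paper supplies two devices you omit: (i) the identity that the normal component of a curl on $\partial\Omega$ is determined by tangential derivatives of its tangential trace, which with $\xi\,\mbox{curl}\,u\times n=-\xi K u_\tau$ converts $|\xi\,\mbox{curl}^2u\cdot n|_{H^{3/2}(\partial\Omega)}$ into controlled lower-order quantities (see (\ref{step5-1})); and (ii) the boundary-vanishing unknown $\eta=\chi\left(w\times n+\Pi(Ku)\right)$ (in the half-space, $\eta=w_h-2\vartheta u_h^{\perp}$), which inherits a parabolic equation from the vorticity equation so that the Dirichlet estimate of Lemma \ref{lem6-0-1} applied to $a\alpha\Delta(\xi\eta)=\eta_t+\mathcal{F}$ yields $\int_0^t\|\xi\eta\|^2_{D^3}ds$ from $\int_0^t\|\eta_t\|^2_{H^1}ds\lesssim\int_0^t\|\nabla^2u_t\|^2_{L^2}ds$. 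Without these (or a worked-out substitute, e.g.\ the $v\times n$ version of Lemma \ref{lem-div-curl} applied to $\xi\,\mbox{curl}\,u$ with the Navier-slip trace bounded through $\|\xi u\|_{H^3}$, which you do not state), the claimed bound is not established. Two smaller defects: Lemma \ref{Appendix} as stated needs $\xi Zu\cdot n=0$ on $\partial\Omega$, while $(Zu)\cdot n=-u\cdot Zn\neq0$ in general, so the tangential step requires the local flattening/projection the paper uses rather than a direct citation; and Proposition \ref{prop-imp} is a first-order inequality playing no role in recombining the fourth-order pieces, while the remark ``$\xi^2\le Cc_0^2\varphi^{-2}$'' is vacuous since $\xi=\varphi^{-1}$.
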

\begin{proof}
In order to avoid complications due to the geometry of the domain, we shall first give the proof of Lemma \ref{lem-u-2} in the case where $\Omega$ is the half space.

(i) The case of a half-space: $\Omega=\mathbb{R}^3_+$.
Set
 \begin{align*}
 w=\mbox{curl}u, \quad  w_{h}=(w_1,w_2)=\left(\partial_{2}u_3-\partial_3u_2, \partial_{3}u_1-\partial_1u_3\right).
 \end{align*}
 Denoting $\partial$ as the tangential derivative, and
noting that
\begin{align*}
\partial_{3}u_1=w_2+\partial_1u_3,\quad   \partial_{3}u_2=-w_1+\partial_2u_3,
\end{align*} we have
 \begin{align}\label{Du4}
 \|\xi \nabla u\|_{D^3}
 \leq C\left(\|\xi \partial u\|_{D^3} +\|\xi\mbox{div}u\|_{D^3}+\|\xi w_{h}\|_{D^3}\right).
 \end{align}
 Next, we will estimate $\int_0^t\|\xi \partial u\|^2_{D^3}ds$, $\int_0^t\|\xi\mbox{div}u\|^2_{D^3}ds$  and $\int_0^t\|\xi w_{h}\|^2_{D^3}ds$, respectively.
 \vspace{2mm}

{\bf{Step 1.}} \ The estimate of $\int_0^t\|\xi \partial u\|^2_{D^3}ds$.  Taking the tangent derivative $\partial$ to (\ref{equ-u-linear}), we have
 \begin{align*}
 	aL(\xi \partial u)
 =&-\partial u_{t}-a\partial\xi Lu- \partial(v\cdot\nabla u)-\nabla \partial\phi+  \partial(\psi\cdot Q(u))
 	- aG(\nabla\xi,  \partial u)\nonumber\\
 	\equiv& - \partial u_{t}+\widetilde{N}_1,
 \end{align*}
 with the boundary conditions:
 \begin{align*}
 	\xi  \partial u \cdot n=0,\quad
 \mbox{curl}(\xi  \partial u)\times n=(\nabla\xi\cdot n) \partial u-\xi K(x)\partial u
 \quad \mbox{on}\  \partial\Omega.
 \end{align*}
Applying the elliptic estimate as in Lemma \ref{Appendix}  and using (\ref{nabla^2u_t}), we get
\begin{align}\label{half-1}
\int_0^t\|\nabla^2(\xi\partial u)\|^2_{H^{1}}ds&\leq C\int_0^t\big(\|\partial u_{t}\|^2_{H^{1}}+\|\widetilde{N}_1\|^2_{H^{1}}+
\|\nabla(\xi\partial  u)\|_{L^2}^2\big)ds\nonumber\\
&+C\int_0^t(\|\nabla^2\xi\|^2_{H^1}+\|\nabla\xi\|^2_{L^\infty})\|\partial u\|^2_{H^{2}}ds\nonumber\\
&\leq C c_0^{10}c_2^2, \quad \mbox{for} \quad 0\leq t\leq T_3.
	\end{align}

{\bf{Step 2.}}\ To estimate $\int_0^t\|\xi\mbox{div}u\|^2_{D^3}ds$.

Taking the operator $\mbox{curl}$ to the equation (\ref{equ-u-linear}), then  $w$ satisfies the following system
 \begin{align}\label{equ-w}
 w_t-a\alpha\xi\Delta w=-a\nabla \xi \times Lu-\mbox{curl}(v\cdot\nabla u)
 +\mbox{curl}(\psi\cdot Q (u)).
 \end{align}
Applying Lemma \ref{lem-div-curl}  with $v = \xi\mbox{curl}^2 u$, we have
\begin{align}\label{curl^4u}
\|\xi\mbox{curl}^2 u\|^2_{D^2}
&\leq C\|\xi\mbox{curl}^3u\|^2_{H^1}
+C\left(\|\nabla\xi\|^2_{L^\infty}\|\nabla^2u\|^2_{1}
+\|\nabla^2\xi\|^2_{L^3}\|\nabla^2u\|^2_{L^6}
\right)\nonumber\\
&+C\|\xi\mbox{curl}^2u\|^2_{L^2}+C|\xi\mbox{curl}^2u\cdot n|_{H^{\frac{3}{2}}(\partial\Omega)}.
\end{align}
Let $v^{\perp}=-v\times n$, then $v=v^{\perp}\times n$ on $\partial\Omega$  provided  $v\cdot n|_{\partial\Omega}=0$.
Using the boundary condition $\xi\mbox{curl}u\times n=-\xi K u_{\tau}$ on $\partial\Omega$, it holds
\begin{align*}
\mbox{curl}\left(\xi\mbox{curl}u+\xi(K u)^{\perp}\right)\cdot n|_{\partial\Omega}=0.
\end{align*}
Therefore,
\begin{align*}
\xi \mbox{curl}^2u\cdot n=-\nabla\xi\times \mbox{curl}u\cdot n-\mbox{curl}\left(\xi(Ku)^{\perp}\right)\cdot n \quad \mbox{on}\quad  \partial\Omega.
\end{align*}
So, the boundary integral term on the right-hand side of (\ref{curl^4u}) can be estimated as follows,
\begin{align}\label{step5-1}
|\xi\mbox{curl}^2u\cdot n|^2_{H^{\frac{3}{2}}(\partial\Omega)}
&\leq C\left(|\nabla\xi\times \mbox{curl}u\cdot n|^2_{H^{\frac{3}{2}}(\partial\Omega)}
+|\mbox{curl}\left(\xi(Ku)^{\perp}\right)\cdot n|^2_{H^{\frac{3}{2}}(\partial\Omega)}\right)\nonumber\\
&\leq C \left(\|\nabla\xi\cdot\nabla u\|^2_{{2}}+\|\nabla(\xi u)\|^2_{{2}}\right)\nonumber\\
&\leq Cc_0^{12}c_3^4, \quad \mbox{for}\ 0\leq t\leq T_3.
\end{align}
To control the first term on the right-hand side in (\ref{curl^4u}),  we  use the equation of $w$ (\ref{equ-w}) directly to deduce that
\begin{align}\label{step5-2}
\|\xi\mbox{curl}^3u\|^2_{H^1}&=\|\xi\Delta w\|^2_{H^1}\nonumber\\
&\leq \|w_t\|^2_{H^1}+\|a\nabla \xi \times Lu-\mbox{curl}(v\cdot\nabla u)
 +\mbox{curl}(\psi\cdot Q (u))\|^2_{H^1}\nonumber\\
  &\leq C\|\nabla^2 u_t\|^2_{L^2}+Cc_0^{10}c_3^6,\quad \mbox{for}\ 0\leq t\leq T_3.
\end{align}
Putting (\ref{step5-1}), (\ref{step5-2}) into (\ref{curl^4u}), and using (\ref{nabla^2u_t}), we obtain
\begin{align}\label{step5-3}
\int_0^t \|\xi\mbox{curl}^2 u\|_{D^2}^2ds&\leq C\int_{0}^t\|\nabla^2 u_t\|^2_{L^2}ds+Cc_0^{10}c_3^6t
 \leq Cc_0^{10}c_2^2,\nonumber\\
\int_0^t\|\xi\nabla^2\mbox{curl}^2 u\|_{L^2}^2ds
&\leq C c_0^{10}c_2^2, \quad
\int_0^t\|\nabla^2\mbox{curl}^2 u\|_{L^2}^2ds
\leq C c_0^{12}c_2^2,
\end{align}
for $0\leq t\leq T_3$.

Furthermore, taking $\partial_{x_i}\partial_{x_j}$  to (\ref{equ-u-linear}), multiplying the resulting identities by $\xi\partial_{x_i}\partial_{x_j}\nabla \mbox{div}u$, $i =
1, 2, 3, j = 1, 2, 3$, summing them up, and integrating over $\Omega$, we obtain
\begin{align*}
&a(2\alpha+\beta)\|\xi\nabla^3\mbox{div}u\|^2_{L^2}\\[2mm]
=&\int_{\Omega}\left(\nabla^2u_t
+a\alpha\nabla ^2\mbox{curl}^2 u
+a\nabla^2\xi\cdot Lu
+2a\nabla\xi\cdot \nabla Lu
+\nabla^2(v\cdot \nabla u+\nabla\phi-\psi\cdot Q(u))\right)\cdot\xi\nabla^3\mbox{div}u.
\end{align*}
Hence, by using the Young's inequality, we get that
\begin{align*}
\|\xi\nabla^3\mbox{div}u\|^2_{L^2}
\leq
C(\|\nabla^2u_t\|_2^2+\|\nabla^2\mbox{curl}^2u\|^2_{L^2}+
\|\nabla^2 \xi\|_1^2\|\nabla^2 u\|^2_1+\|v\|^2_{2}\|\nabla u\|^2_{2}+\|\nabla^3\phi\|_{L^2}^2).
\end{align*}
Integrating $t$ and using (\ref{nabla^2u_t}) and (\ref{step5-3}), we finally get
\begin{align*}
\int_0^t\|\xi \nabla^3\mbox{div}u\|^2_{L^2} ds
\leq Cc_0^{12}c_2^2, \quad \mbox{for}\ 0\leq t\leq T_3.
\end{align*}
Therefore, we obtain the estimate
\begin{align}\label{half-2}
\int_0^t\|\xi \mbox{div} u\|^2_{D^3}ds
\leq Cc_0^{12}c_2^2, \quad \mbox{for}\ 0\leq t\leq T_3.
\end{align}

{\bf{Step 3.}}\ At last, we estimate $\int_0^t\|\xi w_{h}\|^2_{D^3}ds$.
When $\Omega=\mathbb{R}^3_{+}$, $S(n) =0$, then the Navier-slip boundary condition (\ref{boundary2-2}) reduces to
 \begin{align}
 u_3=0,  \quad  w_1=-2\vartheta u_2,\quad  w_2=2\vartheta u_1,\quad   \mbox{on}\ \partial\Omega.
 \end{align}
This leads us to introduce the unknown
\begin{align*}
\eta=w_{h}-2\vartheta u_{h}^{\perp}.
\end{align*}
Then
 \begin{align*}
\eta=0\quad \mbox{on}\ \partial\Omega,
\end{align*}
and
\begin{align}\label{step5-4}
\int_0^t\|\xi w_{h}\|^2_{D^3}ds\leq C\int_0^t\|\xi \eta\|^2_{D^3}ds+C\int_0^t\|\xi u_{h}\|^2_{D^3}ds.
\end{align}
Consequently, we shall only  estimate $\int_0^t\|\xi \eta\|^2_{D^3}ds$  in the following.
Rewriting the equations of $u$ and $w$ as
 \begin{align*}
 &(u_h)_t-a \alpha\xi \Delta u_h=a(\alpha+\beta)\xi \nabla_{h} \mbox{div}u-v\cdot \nabla u_{h}-\nabla_{h}\phi+(\psi\cdot Q(u))_{h}
 \equiv(\Upsilon_{u})_{h},\\
&  (w_h)_t-a \alpha\xi \Delta w_h=\left(-a\nabla\xi\times Lu-\mbox{curl}(v\cdot u)+\mbox{curl}(\psi\cdot Q(u))\right)_{h}
 \equiv(\Upsilon_{w})_{h}.
 \end{align*}
From the definition of $\eta$, we find that it solves the equation
 \begin{align*}
 \eta_t-a \alpha\xi \Delta\eta=(\Upsilon_{w})_{h}-2\vartheta(\Upsilon_{u})_h^{\perp},
 \end{align*}
that is,
\begin{align*}
 a \alpha\Delta(\xi \eta)=\eta_t-(\Upsilon_{w})_{h}+2\vartheta(\Upsilon_{u})_h^{\perp}+a\alpha \Delta\xi \eta+2a\alpha \nabla\xi\cdot\nabla\eta\equiv \eta_t+\Upsilon,
 \end{align*}
  with boundary condition $\xi \eta|_{\partial\Omega}=0$. Applying the elliptic estimates as in Lemma \ref{lem6-0-1}, it holds
  \begin{align*}
 \int_0^t\|\xi \eta\|^2_{D^3}ds\leq& C\int_0^t(\|\eta_t\|^2_{H^1}+\|\Upsilon\|^2_{H^1}+\|\nabla(\xi \eta)\|^2_{L^2})ds\nonumber\\
 &\leq Cc_0^{10}c_2^2,\quad \mbox{for}\quad   0\leq t\leq T_2.
 \end{align*}
 Then, recalling (\ref{step5-4}), we get
\begin{align}\label{half-3}
\int_0^t\|\xi w_{h}\|^2_{D^3}ds\leq C c_0^{10}c_2^2, \quad \mbox{for}\  0\leq t\leq T_3.
\end{align}
Finally,  combining the estimates from (\ref{half-1}), (\ref{half-2}),(\ref{half-3}), and utilizing (\ref{Du4}), we obtain
\begin{align*}
\int_0^t\|\xi \nabla u\|^2_{D^3}ds\leq  Cc_0^{12}c_2^2,
\quad \mbox{for}\  0\leq t\leq T_3,
\end{align*}
which immediately yields the desired estimate (\ref{step5}) in the case of half space.
\vspace{2mm}

(ii)\ For general exterior domain $\Omega$, one assumes that the domain $\Omega$ has
a covering such that
\begin{align*}
\Omega\subset \Omega_0\cup_{k=1}^{m}\Omega_{k},
\end{align*}
where $\bar{\Omega}_0\subset\Omega$, and in each $\Omega_{k}$
there exists a function $\varrho_k$ such that
\begin{equation*}
\Omega\cap \Omega_k=\{(x_1,x_2,x_3)|x_3<\varrho_{k}(x_1,x_2)\}\cap \Omega_{k},\quad
\partial\Omega\cap \Omega_k=\{x_3=\varrho_{k}(x_1,x_2)\}\cap \Omega_{k}.
\end{equation*}
By using that $\partial\Omega$ is given locally by $x_3 = \varrho(x_1, x_2)$ (we omit the subscript $k$
for notational convenience), it is convenient to use the coordinates:
\begin{align*}
\Psi:(y,z)\mapsto (y,\varrho(y)+z)=x.
\end{align*}
Denote by $n$ the unit outward normal in the physical space which is
given locally by
\begin{align*}
n(x)\equiv n(\Psi(y,z))=\frac{1}{\sqrt{1+|\nabla \varrho(y)|^2}}
\left(
\partial_1\varrho(y),
\partial_2\varrho(y),
-1\right)^{T}
\end{align*}
and by $\Pi$, the orthogonal projection
\begin{align}\label{orth}
 \Pi u\equiv\Pi (\Psi(y,z))u=u-[u\cdot n(\Psi(y,z))]n(\Psi(y,z)),
 \end{align}
 which gives the orthogonal projection onto the tangent space of the boundary.

 Note that
 \begin{align}\label{gen-1}
 \mbox{div}u=\partial_{n}u\cdot n+(\Pi \partial_{y^1}u)_1+(\Pi \partial_{y^2}u)_2,
 \end{align}
 \begin{align}\label{gen-2}
 \partial_{n}u=[\partial_n u\cdot n]n+\Pi (\partial_{n}u).
 \end{align}
Since
 \begin{align}\label{com}
 \xi\chi \nabla u= \xi\chi\Pi(\nabla u)+ \xi\chi [\partial_{n}u\cdot n]n+ \xi\chi\Pi(\partial_nu),
 \end{align}
 in view of (\ref{gen-1}), we find that
  \begin{align}\label{tan}
  \int_0^t\|\xi\chi\Pi(\nabla u)+ \xi\chi [\partial_{n}u\cdot n]n\|^2_{D^3}ds&\leq
  \int_0^t\|\xi\chi\Pi(\nabla u)\|^2_{D^3}ds+\int_0^t\| \xi\chi \mbox{div}u\|^2_{D^3}ds\nonumber\\
  &\leq C  c_0^{12}c_2^2, \quad \mbox{for}\  0\leq t\leq T_3,
  \end{align}
  which can be estimated in the similar way as the Step 1  and Step 2 in the case of half space.
Therefore, to obtain a complete estimate of $\int_0^t\|\xi\chi\nabla u\|^2_{D^3}ds$, it remains to estimate $\int_0^t\|\xi\chi\Pi(\partial_{n}u)\|^2_{D^3}ds$. Referring the idea of the conormal space (one can refer to \cite{M2012}, \cite{WXY2015}),   we define
\begin{align*}
\eta= \chi(w\times n+\Pi(Ku))=\chi(\Pi(w\times n)+\Pi(K u)).
\end{align*}
Then, in view of the Navier-slip condition (\ref{boundary2-2}),
$\eta$ satisfies
\begin{align*}
\eta=0,\quad \mbox{on}\ \partial\Omega.
\end{align*}
In view of  $w\times n=(\nabla u-(\nabla u)^{T})\cdot n$, then $\eta$ can be rewritten as
\begin{align*}
\eta=\chi\left(\Pi(\partial_{n}u)-\Pi(\nabla(u\cdot n))+\Pi((\nabla n)^{T}\cdot u)+\Pi(Ku)\right),
\end{align*}
which together with (\ref{tan}) and the estimations of lower-order derivatives   in Lemma \ref{lem-u} yields that
\begin{align*}
\int_0^t\|\xi\chi\Pi(\partial_{n}u)\|^2_{D^3}ds\leq \int_0^t\|\xi\eta\|^2_{D^3}ds+
C  c_0^{12}c_2^2, \quad \mbox{for}\  0\leq t\leq T_3.
\end{align*}
Hence, it remains to estimate $\int_0^t\|\xi\eta\|^2_{D^3}ds$.  Recalling the equations  of $u$ and $w$ as follows,
\begin{align*}
 &u_t-a\alpha\xi\Delta u=a(\alpha+\beta)\xi \nabla\mbox{div}u-v\cdot \nabla u-\nabla\phi+\psi\cdot Q(u)
 \equiv \mathcal{F}_1,\\
 &w_t-a\alpha\xi\Delta w=-a\nabla \xi \times Lu-\mbox{curl}(v\cdot\nabla u)
 +\mbox{curl}(\psi\cdot Q (u))
 \equiv \mathcal{F}_2,
 \end{align*}
then, it is easy to see that
$\eta$ solves the equations
\begin{align*}
\eta_t-a\alpha \xi\Delta\eta=\chi (\mathcal{F}_2\times n+\Pi(K\mathcal{F}_1))+a\alpha \xi \mathcal{F}_3-a\alpha \xi\chi \Delta(\Pi K) u,
\end{align*}
where
\begin{align*}
\mathcal{F}_3&=-\chi w\times \Delta n-2\chi \sum_{j=1}^2\partial_jw\times \partial_jn
-2\chi \sum_{j=1}^2\partial_j(\Pi K)\partial_ju\\
&-\Delta\chi (w\times n+\Pi(Ku))-2\sum_{j=1}^2\partial_j\chi\partial_j(w\times n+\Pi(Ku)).
\end{align*}
Furthermore, we have
\begin{align*}
a\alpha \Delta( \xi\eta)
=\eta_t+\mathcal{F},
\end{align*}
with  the boundary condition $\xi\eta|_{\partial\Omega}=0$,
 and
 \begin{align*}
\mathcal{F}=
-\chi (\mathcal{F}_2\times n+\Pi(K\mathcal{F}_1))-a\alpha \xi \mathcal{F}_3+a\alpha \xi\chi \Delta(\Pi K) u
+a\alpha\Delta\xi \eta
+2a\alpha \nabla\xi\cdot\nabla\eta,
\end{align*}
 Thus,
  applying the elliptic estimates as in Lemma \ref{lem6-0-1}, it holds
\begin{align*}
\int_0^t\|\xi\eta\|^2_{D^3}ds&\leq C\int_0^t \left(\|\eta_t\|^2_{H^1}+\|\mathcal{F}\|^2_{H^1}+\|\nabla(\xi\eta)\|^2_{L^2}\right)ds\nonumber\\
&\leq Cc_0^{10}c_2^2, \quad \mbox{for}\  0\leq t\leq T_3.
\end{align*}
Hence, the proof of Lemma \ref{lem-u-2} is complete.
\end{proof}

Then combining the estimates obtained in Lemmas \ref{lem-phi}-\ref{lem-u-2},
defining the time
\begin{equation*}
T^{*}=\min\{T, (1+C^7c_0^{57})^{-6}\}
\end{equation*}
and constants
\begin{equation}\label{c_i}
c_1=C^{\frac{1}{2}}c_0,\quad c_2=C^{\frac{1}{2}}c_0^{\frac{7}{2}},\quad c_3=C^{\frac{3}{2}}c_0^{\frac{23}{2}},
\quad c_4=C^7c_0^{57},
\end{equation}
we can obtain
\begin{align}\label{local-linear}
&\|\phi(t)-\phi^\infty\|_3^2+\|\phi_t(t)\|_2^2+\|\phi_{tt}(t)\|_{L^2}^2\leq c_4^7,\nonumber\\
&\|\psi(t)\|^2_{D^1\cap D^2}\leq c_1^2, \quad
\|\psi_t(t)\|^2_{1}+\int_0^t\|\psi_{tt}(s)\|^2_{L^2}ds\leq c_4^5,\nonumber\\
& \|\nabla \xi_t(t)\|^2_{L^2}+\|\xi_t(t)\|^2_{L^\infty}\leq c_4^2,
\nonumber\\
&\|\varphi(t)\|^2_{L^\infty\cap D^{1,6}\cap D^{2,3}\cap D^3}+\|f(t)\|^2_{L^\infty\cap L^6\cap D^{1,3}\cap D^2}\leq c^2_2,\nonumber\\
&\xi>\frac{1}{2c_0},\quad \frac{2}{3}\sigma^{-2\kappa}<\varphi,\quad
\|\varphi_t(t)\|^2_{ L^6\cap D^{1,3}\cap D^2}+\|f_t(t)\|^2_{L^3\cap D^1}\leq c_4^{11},\nonumber\\
&\|\xi\nabla u(t)\|_{L^2}^2+\|u(t)\|^2_1+\int_0^t\left(\|\nabla u\|^2_1+\|u_t\|^2_{L^2}\right)ds\leq c_2^2,\nonumber\\
&\|u_t(t)\|_{L^2}^2+\|\xi\nabla^2u\|_{L^2}^2
+\|\nabla^2u(t)\|_{L^2}^2
+\int_0^t\left(\|\nabla^3u\|_{L^2}^2+\|\xi\nabla^2 u\|_{D^1}^2
+\|\nabla u_t\|_{L^2}^2\right)ds\leq c_3^2,\nonumber\\
&\|\xi\nabla u_t(t)\|_{L^2}^2+\|\nabla u_t(t)\|_{L^2}^2+\|\nabla^3u(t)\|_{L^2}^2
+\|\xi\nabla^2 u(t)\|_{D^1}^2+\int_0^t\|u_{tt}\|^2_{L^2}ds\leq c_4^2,\nonumber\\
&\int_0^t\left(\|\nabla^2 u_t\|_{L^2}^2+\|\nabla^4u\|_{L^2}^2+\|\xi\nabla^2 u\|_{D^2}^2+\|(\xi\nabla^2 u)_t\|_{L^2}^2\right)ds\leq c_4^2.
\end{align}

In conclusion, given fixed constants $c_0$ and $T$, there really exists constants $T^{*}$ and $c_i$, $(i=1,2,3,4)$, which depend only on $c_0$ and $T$, such that if $v$ and $g$ satisfy the conditions (\ref{known}),
then (\ref{local-linear}) holds for the strong solution to (\ref{linear}) on $[0,T^*]\times\Omega$.
In the proof  of Theorem \ref{thm2}, these estimates play an important role in the iteration process.

\subsection{Construction of the nonlinear approximation solutions away from vacuum}\label{sub3.4}

In this subsection, based on the assumption that $\phi_0>\sigma>0$, we will prove the local-in-time well-posedness of the strong solution to the following initial-boundary value problem:
\begin{equation}\label{non-1}
\left\{
\begin{array}{llll}
\phi_t+u\cdot\nabla\phi+(\gamma-1)\phi\mbox{div}u=0,\\
u_{t}+u\cdot\nabla u+\nabla\phi +a\phi^{2\kappa}Lu=\psi\cdot Q(u),\\
\psi_t+\nabla(u\cdot\psi)+(\delta-1)\psi\mbox{div}u+\delta a \phi^{2\kappa}\nabla \mbox{div}u=0,\\
 (\phi,\psi, u)|_{t=0}=\left(\phi_0,\frac{a\delta}{\delta-1}\nabla\phi_0^{2\kappa},u_0\right),\\
  (\phi,\psi, u)\rightarrow(\phi^{\infty},0,0),\quad \mbox{as} \quad |x|\rightarrow \infty,\quad t\geq0,\\
 u\cdot n=0,\quad \mbox{curl}u\times n= -K(x)u,\quad \mbox{on}\quad \partial\Omega,
 \end{array}
\right.
\end{equation}
whose life span is independent of $\sigma$.

\begin{thm}\label{thm-nonlinear}
Let (\ref{constant}) hold and $\phi^{\infty}$ be a positive constant. Assume that the initial data $(\phi_0,\xi_0=(\phi_0)^{2\kappa},\psi_0=\frac{a\delta}{\delta-1}\nabla(\phi_0)^{2\kappa}, u_0)$ satisfies the hypothesis of Lemma \ref{global}, and there exists a positive constant $c_0$ independent of $\sigma$ such that (\ref{c0}) holds. Then there exist a time $T_{*}>0$ and a unique strong solution
$(\phi,\xi=\phi^{2\kappa},\psi=\frac{a\delta}{\delta-1}\nabla\xi, u)$
in $[0, T_{*}]\times\Omega$ to the initial-boundary value problem  (\ref{non-1}),
where $T_{*}$ is independent of $\sigma$. Moreover,
the estimates (\ref{local-linear}) hold for $(\phi,\xi,\psi, u)$ with $T^{*}$ replaced by $T_{*}$, and are independent of $\sigma$.
\end{thm}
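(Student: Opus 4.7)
The plan is to construct $(\phi,\xi,\psi,u)$ as the limit of a Picard-type iteration built on the linearized problem (\ref{linear}). Specifically, at iteration $k\geq 1$, given $(u^{k-1},\xi^{k-1})$ to play the role of $(v,g)$, solve (\ref{linear}) to obtain $(\phi^k,\xi^k,u^k)$ on $[0,T]$, and set $\psi^k:=\frac{a\delta}{\delta-1}\nabla\xi^k$. The starting iterate $(\phi^0,\xi^0,u^0)$ may be chosen by freezing the initial data in time (or by solving an auxiliary heat-type problem) so that the hypotheses of Lemma \ref{global} and the smallness conditions (\ref{known}) hold trivially at $k=0$. Because $\phi_0>\sigma>0$, Lemma \ref{global} guarantees the existence of each iterate with the regularity (\ref{lem3.2-reg}).

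The first step is the inductive closure: if $(u^{k-1},\xi^{k-1})$ verifies (\ref{known}) on $[0,T^{*}]$ with the constants $c_i$ fixed in (\ref{c_i}), then the estimates (\ref{local-linear}) established in Section \ref{sub3.3} imply that $(u^k,\xi^k)$ also verifies (\ref{known}) on the same interval, with the same $c_i$. Since the constants $c_i$ and $T^{*}$ depend only on $c_0$ and $T$, the whole sequence $\{(\phi^k,\xi^k,u^k)\}_{k\geq 0}$ is bounded uniformly in $k$ and in $\sigma$ in the strong topology of (\ref{local-linear}); in particular $\xi^k>\frac{1}{2c_0}$ throughout $[0,T^{*}]$.

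The second step is the convergence of the iteration in a weaker norm. Introduce the differences
\begin{equation*}
\bar{\phi}^{k+1}=\phi^{k+1}-\phi^{k},\quad \bar{\xi}^{k+1}=\xi^{k+1}-\xi^{k},\quad \bar{u}^{k+1}=u^{k+1}-u^{k},
\end{equation*}
which satisfy linearized transport/momentum equations forced by the previous-level differences. Testing the velocity difference against $\bar{u}^{k+1}$ (weighted by $\xi^{k+1}$ only where the degenerate coefficient $a\xi Lu$ appears), and the transport differences against $\bar{\phi}^{k+1}$, $\bar{\xi}^{k+1}$, one obtains a recursive inequality of the form
\begin{equation*}
\Phi^{k+1}(t)+\int_{0}^{t}\|\sqrt{\xi^{k+1}}\nabla \bar{u}^{k+1}\|_{L^2}^{2}\,ds\leq \eta\,\sup_{[0,t]}\Phi^{k}+C_\eta\int_{0}^{t}\Phi^{k+1}(s)\,ds,
\end{equation*}
where $\Phi^{k}$ collects the $L^{\infty}(0,t;L^{2})$ norms of $(\bar{\phi}^{k},\bar{\xi}^{k},\bar{u}^{k})$ and $\eta\in(0,1)$ can be made small by shrinking $T^{*}$ if necessary. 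Gronwall and a telescoping sum then show that $\{(\phi^k,\xi^k,u^k)\}$ is Cauchy in $C([0,T^{*}];L^{2})$.

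Finally, combine the uniform bounds (\ref{local-linear}) with the strong $L^2$ convergence of the differences, the compactness results in Lemma \ref{lem-AL}, and weak-$*$ lower semicontinuity, to obtain a limit $(\phi,\xi,\psi,u)$ that satisfies (\ref{local-linear}) (with $T^{*}$ replaced by $T_{*}=T^{*}$, uniformly in $\sigma$) and solves the first three equations of (\ref{linear}) with $v=u$, $g=\xi$. To promote this to a solution of the fully nonlinear system (\ref{non-1}), the key identification $\xi=\phi^{2\kappa}$ is obtained by noting that, thanks to $2\kappa(\gamma-1)=\delta-1$, the function $\phi^{2\kappa}$ satisfies exactly the transport equation for $\xi$ (with velocity $u$ and coefficient $g=\phi^{2\kappa}$) and shares the same initial datum; uniqueness of transport equations with $\nabla u\in L^{1}(0,T_{*};L^{\infty})$ then forces $\xi=\phi^{2\kappa}$, and hence $\psi=\frac{\delta}{\delta-1}\nabla\rho^{\delta-1}$ in the original variables. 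Uniqueness of the solution to (\ref{non-1}) follows by a parallel energy estimate for the difference of any two solutions. The main obstacle is the Cauchy-sequence estimate in the third step: the singular weights in $a\xi Lu$ and $\psi\cdot Q(u)$ and the coupling between $\bar{\xi}^{k+1}$ and $\nabla^{2}u^{k}$ force one to weight the $\bar{u}^{k+1}$-energy by $\sqrt{\xi^{k+1}}$ and to rely on the uniform control of $\varphi=\xi^{-1}$ from Lemma \ref{lem-varphi} to absorb the term $\bar{\xi}^{k+1}L u^{k}$ without losing derivatives.
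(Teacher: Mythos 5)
Your overall skeleton (Picard iteration on the linearized problem \eqref{linear} with $(v,g)=(u^k,\xi^k)$, inductive closure of \eqref{known} via the a priori bounds of Section \ref{sub3.3}, identification $\xi=\phi^{2\kappa}$ by uniqueness of the transport equation, and recovery of the weighted estimates by weak limits) matches the paper. The genuine gap is in the contraction step. You propose to measure the differences $(\overline{\phi}^{k+1},\overline{\xi}^{k+1},\overline{u}^{k+1})$ only in $L^\infty(0,t;L^2)$ with first-order dissipation of $\overline{u}^{k+1}$. This metric does not close: subtracting the momentum equations produces the term $\overline{\psi}^{k+1}\cdot Q(u^k)=\tfrac{a\delta}{\delta-1}\nabla\overline{\xi}^{k+1}\cdot Q(u^k)$, which involves one derivative of $\overline{\xi}^{k+1}$; integrating by parts leaves a boundary integral $\int_{\partial\Omega}\overline{\xi}^{k+1}\bigl(Q(u^k)\,\overline{u}^{k+1}\bigr)\cdot n$ (only $\overline{u}^{k+1}\cdot n=0$ is available), whose trace control again requires $\|\nabla\overline{\xi}^{k+1}\|_{L^2}$. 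Once you are forced to add $\nabla\overline{\xi}^{k+1}$ (or equivalently $\overline{\psi}^{k+1}$) to the metric, its evolution equation contains $\xi^{k-1}\nabla\operatorname{div}\overline{u}^{k}$, i.e.\ second derivatives of the \emph{previous difference}, and a prefactor $\|\xi^{k-1}\|_{L^\infty}$ which behaves like $\sigma^{2\kappa}$ and is \emph{not} uniform in $\sigma$ (only $\varphi=\xi^{-1}$, $f=\psi\varphi$ and $\nabla\xi$ carry $\sigma$-uniform bounds, cf.\ \eqref{c0} and Lemma \ref{lem-varphi}). Since the admissible $T_*$ is dictated by the smallness conditions in the contraction, constants of size $\sigma^{2\kappa}$ would make $T_*$ degenerate as $\sigma\to0$, contradicting the very claim of the theorem; your remark that the troublesome coupling is ``$\overline{\xi}^{k+1}Lu^k$'' misses the point, as that term is harmless in $L^2$, while the real obstruction is $\nabla\overline{\xi}^{k+1}$ and the resulting cascade to $\|\nabla^2\overline{u}^{k}\|_{L^2}$.

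The paper's proof is organized precisely to defuse these two issues: it rewrites the iteration in the variables $\varphi^{k+1}=(\xi^{k+1})^{-1}$ and $f^{k+1}=\psi^{k+1}/\xi^{k+1}$, so that the difference system \eqref{inter1} contains $\overline{f}^{k+1}\cdot Q(u^k)$ (zeroth order in the new unknown, estimated in $L^2$) instead of $\nabla\overline{\xi}^{k+1}\cdot Q(u^k)$, and the Lam\'e operator appears with constant coefficients after multiplying by $\varphi^{k+1}$, so the natural energy is $\|\sqrt{\varphi^{k+1}}\,\overline{u}^{k+1}\|_{L^2}$ (weight $\xi^{-1/2}$, $\sigma$-uniform), not $\|\sqrt{\xi^{k+1}}\,\overline{u}^{k+1}\|_{L^2}$. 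The unavoidable appearance of $\nabla\operatorname{div}\overline{u}^{k}$ in the $\overline{f}^{k+1}$-equation is then absorbed through a second energy level (testing with $\overline{u}^{k+1}_t$), the elliptic estimate \eqref{D^2u} converting $\|\nabla^2\overline{u}^{k}\|_{L^2}$ into $\|\sqrt{\varphi^{k}}\,\overline{u}^{k}_t\|_{L^2}$ plus lower-order differences, and the $\epsilon,\epsilon_1$ bookkeeping; the contraction is carried out for $\|\overline{\phi}^{k+1}\|_1+\|\overline{\varphi}^{k+1}\|_1+\|\overline{f}^{k+1}\|_{L^2}+\|\sqrt{\varphi^{k+1}}\overline{u}^{k+1}\|_{L^2}$ with all constants depending only on $c_0,\dots,c_4$, which is what makes $T_*$ independent of $\sigma$. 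Your proposal, as written, omits this change of variables, the $H^1$-level control of the scalar differences, and the $\overline{u}_t$-energy/elliptic step, so the recursive inequality you display cannot be established and the $\sigma$-independence of $T_*$ is not justified.
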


The proof is given by an iteration scheme based on the estimates for the linearized problem obtained in Sections \ref{sub3.2}-\ref{sub3.3}. As in Section \ref{sub3.3}, we define constants $c_i$ ($i=1,2,3,4$).

\begin{proof}{\it\textbf{Step 1:} Existence.}
Let $(u^0,\xi^0)$ be the solution to the following  problem:
\begin{equation*}
\left\{
\begin{array}{lll}
X_t-Y\Delta X =0\quad \mbox{in}\quad \Omega\times(0,\infty),\\
Y_{t}+u_0\cdot \nabla Y=0\quad \mbox{in}\quad \Omega\times(0,\infty),\\
(X,Y)|_{t=0}=(u_0,\phi_0^{2\kappa})\quad \mbox{in}\quad \Omega,\\
X\cdot n=0,\quad \mbox{curl}X\times n=-K(x)X\quad \mbox{on}\quad \partial\Omega,\\
(X,Y)\rightarrow (0,(\phi^{\infty})^{2\kappa}) \quad\mbox{as}\quad |x|\rightarrow \infty, \quad t\geq 0.
\end{array}
\right.
\end{equation*}
Choose a time $T^{**}\in (0,T^{*}]$ small enough such that
\begin{align*}
&\|\nabla \xi^{0}(t)\|^2_{D^1\cap D^2}\leq c_1^2,\nonumber\\
&\|u^{0}(t)\|^2_1+\int_0^{T^{**}}\left(\|\nabla^2 u^{0}\|_{L^2}^2+\|u^{0}_t\|_{L^2}^2\right)ds\leq c_2^2,\nonumber\\
&\|\nabla^2u^{0}(t)\|_{L^2}^2+\|u^{0}_t(t)\|_{L^2}^2+\|\xi^{0}\nabla^2u^{0}(t)\|_{L^2}^2
+\int_0^{T^{**}}\left(\|\nabla^3u^{0}\|_{L^2}^2+\|\nabla u^{0}_t\|_{L^2}^2\right)ds\leq c_3^2,\nonumber\\
&\|\nabla^3u^{0}(t)\|_{L^2}^2+\|\nabla u^{0}_t(t)\|_{L^2}^2
+\int_0^{T^{**}}\left(\|\nabla^4u^{0}\|_{L^2}^2+\|\nabla^2u^{0}_t\|_{L^2}^2
+\|u^{0}_{tt}\|_{L^2}^2\right)ds
\leq c_4^2,\nonumber\\
&\|\xi^{0}\nabla^2u^{0}(t)\|_{D^1}^2
+\int_0^{T^{**}}\left(\|(\xi^{0}\nabla^2u^{0})_t\|_{L^2}^2
+\|\xi^{0}\nabla^2u^{0}\|_{D^2}^2\right)ds
\leq c_4^2,\nonumber\\
&\|\nabla \xi^{0}_t\|_{L^2}^2+\|\xi^{0}_t(t)\|_{L^\infty}^2\leq c_4^2.
\end{align*}
At the beginning step of iteration, pick  $(v,g)=(u^0,\xi^0)$, then
 $(\phi^1, \xi^1, u^1)$ could be obtained as a strong solution to the linearized problem \eqref{linear}. Then we construct approximate solutions $(\phi^{k+1},\xi^{k+1}, u^{k+1})$ inductively. In other words, given $(u^k,\xi^k)$ for $k\geq 1$,
then $(\phi^{k+1},\xi^{k+1},  u^{k+1})$ should be defined by solving the following problem:
\begin{equation}\label{inter-0}
\begin{split}
\left\{
\begin{array}{lll}
\phi^{k+1}_t+u^{k}\cdot\nabla\phi^{k+1}+(\gamma-1)\phi^{k+1}\mbox{div}u^{k}=0,\\
\xi_t^{k+1}+u^k\cdot \nabla\xi^{k+1}+(\delta-1)\xi^k\mbox{div}u^k=0,\\
u_t^{k+1}+u^k\cdot\nabla u^{k+1}+\nabla\phi^{k+1}+a\xi^{k+1}Lu^{k+1}=\psi^{k+1}\cdot Q(u^{k+1}),\\
(\phi^{k+1},\xi^{k+1},u^{k+1})|_{t=0}=(\phi_0,\phi_0^{2\kappa},u_0),\\
u^{k+1}\cdot n|_{\partial\Omega}=0,\quad \mbox{curl}u^{k+1}\times n|_{\partial\Omega}=-K(x)u^{k+1},\\
(\phi^{k+1},\xi^{k+1},u^{k+1})\rightarrow (\phi^{\infty},(\phi^{\infty})^{2\kappa},0)\quad \mbox{as}\quad |x|\rightarrow \infty, \  t\geq 0,
\end{array}
\right.
\end{split}
\end{equation}
where
$\psi^{k+1}=\frac{a\delta}{\delta-1}\nabla\xi^{k+1}.$
The problem (\ref{inter-0}) can be solved from (\ref{linear}) by replacing $(v, g)$ with $(u^k, \xi^k)$, and
$(\phi^{k},\xi^{k},u^{k})$ meet the uniform a priori estimates \eqref{local-linear}.

	Let
\begin{align*}
\varphi^{k+1}=\frac{1}{\xi^{k+1}},\quad f^{k+1}=\frac{\psi^{k+1}}{\xi^{k+1}}=\frac{a\delta}{\delta-1}\frac{\nabla\xi^{k+1}}{\xi^{k+1}}.
	\end{align*}
Then  problem (\ref{inter-0}) can be rewritten as
\begin{equation}\label{inter-1}
\begin{split}
\left\{
\begin{array}{lll}
\phi^{k+1}_t+u^{k}\cdot\nabla\phi^{k+1}+(\gamma-1)\phi^{k+1}\mbox{div}u^{k}=0,\\
\varphi^{k+1}_t+u^{k}\cdot\nabla\varphi^{k+1}-(\delta-1)(\varphi^k)^{-1}(\varphi^{k+1})^2\mbox{div}u^{k}=0,\\
f^{k+1}_t+\sum_{l=1}^3A_l(u^k)\partial_lf^{k+1}+B(u^k)f^{k+1}+a\delta(\varphi^k)^{-1}\varphi^{k+1}\nabla\mbox{div}u^k
\\
	\quad =-a\delta\varphi^{k+1}\nabla \xi^k\mbox{div}u^k+(\delta-1)(\varphi^k)^{-1}\varphi^{k+1}f^{k+1}\mbox{div}u^{k},\\
\varphi^{k+1}(u_t^{k+1}+u^k\cdot\nabla u^{k+1}+\nabla\phi^{k+1})+aLu^{k+1}=f^{k+1}\cdot Q(u^{k+1}),\\
(\phi^{k+1},\varphi^{k+1},f^{k+1},u^{k+1})|_{t=0}=(\phi_0,\phi_0^{-2\kappa},\phi_0^{-2\kappa}\psi_0,u_0),\\
u^{k+1}\cdot n|_{\partial\Omega}=0,\quad \mbox{curl}u^{k+1}\times n|_{\partial\Omega}=-K(x)u^{k+1},\\
(\phi^{k+1},\varphi^{k+1},f^{k+1},u^{k+1})\rightarrow (\phi^{\infty},(\phi^{\infty})^{-2\kappa},0,0),\quad \mbox{as}\quad |x|\rightarrow \infty, \  t\geq 0,
\end{array}
\right.
\end{split}
\end{equation}

\vspace{4mm}
{\it{Step 1.1: Strong convergence of $(\phi^{k},\varphi^{k},f^{k},u^{k})$.}}

Now we will prove the full sequence $(\phi^{k},\varphi^{k},f^{k},u^{k})$ converges  to a limit $(\phi,\varphi,f,u)$
in some strong sense.
Set
\begin{align*}
\overline{\phi}^{k+1}=\phi^{k+1}-\phi^{k},\quad \overline{f}^{k+1} =f^{k+1}-f^{k},\quad\overline{\varphi}^{k+1}=\varphi^{k+1}-\varphi^{k}, \quad
  \overline{u}^{k+1}=u^{k+1}-u^{k}.
\end{align*}
Then it follows from \eqref{inter-1} that
\begin{equation}\label{inter1}
	\begin{split}
		\left\{
		\begin{array}{lll}
			 \overline{\phi}^{k+1}_t+u^{k}\cdot\nabla\overline{\phi}^{k+1}+\overline{u}^{k}\cdot\nabla \phi^{k}
			 +(\gamma-1)\left(\overline{\phi}^{k+1}\mbox{div}u^{k}+\phi^{k}\mbox{div}\overline{u}^{k}\right)=0,\\
\overline{\varphi}_t^{k+1}+u^k\cdot\nabla\overline{\varphi}^{k+1}
+\overline{u}^k\cdot\nabla \varphi^{k}+(1-\delta)(\overline{\varphi}^{k}\mbox{div}u^k
			+\varphi^{k-1}\mbox{div}\overline{u}^k+\Upsilon_1^k)=0,\\
\overline{f}^{k+1}_t+\sum_{l=1}^3A_l(u^k)\partial_l	\overline{f}^{k+1}+B(u^k)	 \overline{f}^{k+1}+a\delta\nabla\mbox{div}\overline{u}^k
		=\Upsilon_2^k+\Upsilon_3^k+\Upsilon_4^k,\\
				\varphi^{k+1}(\overline{u}_t^{k+1}+u^k\cdot\nabla \overline{u}^{k+1}+\overline{u}^k\cdot\nabla u^k)+\varphi^{k+1}\nabla\overline{\phi}^{k+1}+\overline{\varphi}^{k+1}\nabla\phi^k+aL\overline{u}^{k+1}\\
				\quad =-\overline{\varphi}^{k+1}(u_t^k+u^{k-1}\cdot \nabla u^k)+f^{k+1}\cdot Q(\overline{u}^{k+1})+ \overline{f}^{k+1}\cdot Q(u^k),
			\end{array}
		\right.
	\end{split}
\end{equation}
 with the following initial and boundary conditions:
\begin{align*}	 &(\overline{\phi}^{k+1},\overline{f}^{k+1},\overline{\varphi}^{k+1},\overline{u}^{k+1})|_{t=0}=(0,0,0,0),\\
	&\overline{u}^{k+1}\cdot n|_{\partial\Omega}=0,\quad \mbox{curl}\overline{u}^{k+1}\times n|_{\partial\Omega}=-K(x)\overline{u}^{k+1},
\end{align*}
where $\Upsilon_i^k(i=1,\cdots,4)$ are introduced as
\begin{align*}
\Upsilon_1^k &= \overline{\varphi}^{k+1} \varphi^{k+1} \xi^k \text{div} u^k - \overline{\varphi}^k \varphi^k \xi^{k-1} \text{div} u^{k-1} + \overline{\varphi}^{k+1} \text{div} u^k - \overline{\varphi}^k \text{div} u^{k-1},\\
\Upsilon_2^k &= (1 - \delta) \left( \overline{f}^k \text{div} u^k + f^{k-1} \text{div} \overline{u}^k + \overline{\varphi}^{k+1} f^k \xi^k \text{div} u^k - \overline{\varphi}^k f^{k-1} \xi^{k-1} \text{div} u^{k-1} \right), \\
\Upsilon_3^k &= -\sum_{l=1}^3 (A_l (u^k) - A_l (u^{k-1})) \partial_l f^k - (B (u^k) - B (u^{k-1})) f^k \\
&\quad - a \delta (\overline{\varphi}^{k+1} \xi^k \nabla \text{div} u^k - \overline{\varphi}^k \xi^{k-1} \nabla \text{div} u^{k-1}), \\
\Upsilon_4^k &= (\delta - 1) \left( f^{k+1} \text{div} u^k \overline{\varphi}^{k+1} \xi^k - f^k \text{div} u^{k-1} \overline{\varphi}^k \xi^{k-1} + \overline{f}^{k+1} \text{div} u^k + f^k \text{div} \overline{u}^k \right).
\end{align*}
For convenience, we introduce some notations in the rest of the proof of this subsection,
\begin{align*}
	R^k(t)&=\|\nabla u^k\|_{L^\infty}+\|\varphi^{k+1}\|_{L^\infty}\|\xi^{k}\nabla u^k\|_{L^\infty},
\quad S^k(t)=\| u_t^k\|_{L^3}+\|u^{k-1}\|_{L^\infty}\|\nabla u^k\|_{L^3},\\
	M^k(t)&=\left(\|f^k\|_{L^\infty}+\|f^{k-1}\|_{L^\infty}\right)\|\xi^{k-1}\nabla u^{k-1}\|_{L^\infty}.
	\end{align*}

 {\it \underline{{Estimate on $\|\overline{\phi}^{k+1}\|_{1}$.}}}\\

Multiplying $(\ref{inter1})_1$ by $\overline{\phi}^{k+1}$, integrating the result over $\Omega$, and applying boundary condition $u^k\cdot n|_{\partial\Omega}=0$ to have the following identity
\begin{equation*}
	-\int_{\Omega} u^k\cdot\nabla\overline{\phi}^{k+1}\overline{\phi}^{k+1}=\frac{1}{2}\int_{\Omega}\mbox{div}u^k|\overline{\phi}^{k+1}|^2,
\end{equation*}
then it implies that
\begin{align}\label{phi-n-1}
	\frac{1}{2}\frac{d}{dt}\|\overline{\phi}^{k+1}\|_{L^2}^2
\leq  C\left(\|\nabla u^{k}\|_{L^\infty}\|\overline{\phi}^{k+1}\|_{L^2}
+\|\overline{u}^{k}\|_{L^6}\|\nabla\phi^{k}\|_{L^3} +\|\nabla\overline{u}^{k}\|_{L^2}\|\phi^{k}\|_{L^\infty}\right)\|\overline{\phi}^{k+1}\|_{L^2}.
	\end{align}
Similarly, computing $\int_{\Omega}\partial_{x_i}(\ref{inter1})_1 \cdot\partial_{x_i}\overline{\phi}^{k+1}$, $i=1,2,3$, it gives
\begin{align}\label{phi-n-2}
	\frac{1}{2}\frac{d}{dt}\|\nabla\overline{\phi}^{k+1}\|_{L^2}^2
	&\leq \left(\|\nabla u^{k}\|_{L^\infty}\|\nabla\overline{\phi}^{k+1}\|_{L^2}
+\|\nabla\overline{u}^{k}\|_{L^2}\|\nabla\phi^{k}\|_{L^{\infty}}
	 +\|\overline{u}^{k}\|_{L^6}\|\nabla^2\phi^k\|_{L^3}\right)\|\nabla\overline{\phi}^{k+1}\|_{L^2}\nonumber\\
	&+\left(\|\overline{\phi}^{k+1}\|_{L^6}\|\nabla^2u^{k}\|_{L^3}
	 +\|\nabla^2\overline{u}^{k}\|_{L^2}\|\phi^{k}\|_{L^\infty}\right)
\|\nabla\overline{\phi}^{k+1}\|_{L^2}.
	\end{align}
Thus, the combination of \eqref{phi-n-1} and \eqref{phi-n-2} results in
\begin{align}\label{step1}
	\frac{d}{dt}\|\overline{\phi}^{k+1}\|_1^2
	\leq  E^k_1(t)\|\overline{\phi}^{k+1}\|_1^2+\epsilon\|\nabla\overline{u}^{k}\|_1^2,
\end{align}
with
\begin{align*}
	\int_0^tE_1^{k}(s)ds\leq C+C\epsilon^{-1}t,
\end{align*}
thanks to the a priori estimates \eqref{local-linear}. Here $\epsilon>0$ is a small constant to be determined.
\vspace{1mm}

 {\underline{\it{Estimate on $\|\overline{\varphi}^{k+1}\|_1$.}}} \\

    Multiplying $(\ref{inter1})_3$ by $\overline{\varphi}^{k+1}$, and integrating over $\Omega$ result in
\begin{align}\label{step3-0}
\frac{d}{dt}\|\overline{\varphi}^{k+1}\|_{L^2}^2
	\leq& CR^k(t)\|\overline{\varphi}^{k+1}\|_{L^2}^2+\|\overline{u}^k\|_{L^6}\|\nabla\varphi^k\|_{L^3}
\|\overline{\varphi}^{k+1}\|_{L^2}\nonumber\\[2mm]
	&+C(\left\|\nabla \overline{u}^k\|_{L^2}\|\varphi^{k-1}\|_{L^\infty}
+\|\overline{\varphi}^k\|_{L^2}\|\nabla u^{k}\|_{L^\infty} \right)\|\overline{\varphi}^{k+1}\|_{L^2}\nonumber\\[2mm]
&+CR^{k-1}(t)\|\overline{\varphi}^{k}\|_{L^2}\|\overline{\varphi}^{k+1}\|_{L^2},
\end{align}
where we have used $u^k\cdot n|_{\partial\Omega}=0$ to have
\begin{equation*}
	-\int_{\Omega} u_j^k\partial_{x_j}\overline{\varphi}^{k+1}\overline{\varphi}^{k+1}=\frac{1}{2}\int_{\Omega}\mbox{div} u^{k}|\overline{\varphi}^{k+1}|^2 \leq \|\nabla u^{k}\|_{L^{\infty}}\|\overline{\varphi}^{k+1}\|_{L^2}^2.
\end{equation*}
Next applying $\partial_x^l(|l|=1)$ to $(\ref{inter1})_3$, producting with $\partial_x^l\overline{\varphi}^{k+1}$,  integrating over $\Omega$, we have
\begin{align}
	&\frac{d}{dt}\|\partial_x^l\overline{\varphi}^{k+1}\|_{L^2}^2\nonumber\\
	\leq& C\left(R^k(t)+\|\xi^k\nabla u^k\|_{L^6}\|\nabla \varphi^{k+1}\|_{L^6}
+\|\varphi^{k+1}\|_{L^\infty}\|\xi^k\nabla^2 u^k\|_{L^3}
+\|\nabla^2u^k\|_{L^3}\right)\|\nabla\overline{\varphi}^{k+1}\|_{L^2}^2\nonumber\\
	 &+C\left(\|\nabla\varphi^k\|_{L^6}\|\nabla\overline{u}^k\|_{L^3}+\|\overline{\varphi}^{k+1}\|_{L^6}
\|\varphi^{k+1}\|_{L^\infty}\|\psi^{k}\|_{L^\infty}
\|\nabla u^{k}\|_{L^3}\right)\|\nabla\overline{\varphi}^{k+1}\|_{L^2}\nonumber\\
	&+C\left(\|\overline{u}^k\|_{L^6}\|\nabla^2\varphi^k\|_{L^3}+\|\nabla^2u^k\|_{L^3}
\|\overline\varphi^k\|_{L^6}+\|\nabla u^k\|_{L^\infty}
\|\nabla\overline\varphi^k\|_{L^2}\right)\|\nabla\overline{\varphi}^{k+1}\|_{L^2}\nonumber\\
	&+\left(\|\nabla\overline u^k\|_{L^3}\|\nabla \varphi^{k-1}\|_{L^6}
+\|{\varphi}^{k-1}\|_{L^\infty}\|\nabla^2\overline {u}^{k}\|_{L^2} \right)\|\nabla\overline{\varphi}^{k+1}\|_{L^2}\nonumber\\
	&+C\left(R^{k-1}(t)+\|\varphi^k\|_{L^\infty}\|\psi^{k-1}\|_{L^\infty}\|\nabla u^{k-1}\|_{L^3}
+\|\nabla^2  u^{k-1}\|_{L^3}\right)\|\nabla \overline{\varphi}^{k}\|_{L^2}
\|\nabla\overline{\varphi}^{k+1}\|_{L^2}\nonumber\\
	&+C\left(\|\varphi^k\|_{L^\infty}\|\xi^{k-1}\nabla^2u^{k-1}\|_{L^3}
+\|\nabla \varphi^{k}\|_{L^6}\|\xi^{k-1}\nabla  u^{k-1}\|_{L^6}\right)
\|\nabla \overline{\varphi}^{k}\|_{L^2}\|\nabla\overline{\varphi}^{k+1}\|_{L^2},\nonumber
	\end{align}
which together with (\ref{step3-0}) implies that
\begin{align}\label{step3}
	&\frac{d}{dt}\|\overline{\varphi}^{k+1}\|_1^2
	\leq
	 E^{k}_{2}(t)\|\overline{\varphi}^{k+1}\|_1^2+\epsilon (\|\nabla \overline{u}^{k}\|_1^2+\|\overline{\varphi}^k\|_1^2),
\end{align} with
\begin{align*}
	\int_0^tE_2^{k}(s)ds\leq C+C\epsilon^{-1}t.
\end{align*}

{\underline{\it{ Estimate on $\|\overline{f}^{k+1}\|_{L^2}$.}}}\\

Multiplying $(\ref{inter1})_2$ by $\overline{f}^{k+1}$, integrating the result over $\Omega$, and applying boundary condition $u^k\cdot n|_{\partial\Omega}=0$, we get
\begin{align}\label{step2}
	\frac{d}{dt}\|\overline{f}^{k+1}\|_{L^2}^2
	\leq &\|\overline{\varphi}^{k+1}\|_1^2+E_3^k(t)\|\overline{f}^{k+1}\|_{L^2}^2
+\epsilon(\|\overline{f}^k\|^2_{L^2}+\|\overline{\varphi}^k\|^2_1+\|\nabla\overline{u}^k\|^2_1),
\end{align}
with
\begin{align*}
	\int_0^tE_3^{k}(s)ds\leq C+C\epsilon^{-1}t.
\end{align*}
Here we have used the following facts
\begin{align}\label{r1r2r3}
	\|\Upsilon_2\|_{L^2}\leq&C\left(\|\overline{f}^{k}\|_{L^2}\|\nabla u^{k}\|_{L^\infty}
+\|\nabla\overline{ u}^{k}\|_{L^2}\|f^{k-1}\|_{L^\infty}+\|f^k\|_{L^\infty}\|\overline{\varphi}^{k+1}\|_{L^2}
\|\xi^k\nabla u^{k}\|_{L^\infty}\right)\nonumber\\
	&+C\|f^{k-1}\|_{L^\infty}\|\xi^{k-1}\nabla u^{k-1}\|_{L^\infty}\|\overline{\varphi}^{k}\|_{L^2},\nonumber\\
	\|\Upsilon_3\|_{L^2}\leq&C\left(\|\nabla f^{k}\|_{L^3}\|\overline{ u}^{k}\|_{L^6}+\|f^k\|_{L^\infty}
\|\nabla\overline{u}^{k}\|_{L^2}\right)\nonumber\\
&+C\left(\|\overline{\varphi}^{k+1}\|_{L^6}
\|\xi^k\nabla\mbox{div} u^{k}\|_{L^3}+\|\overline{\varphi}^{k}\|_{L^6}
\|\xi^{k-1}\nabla\mbox{div} u^{k-1}\|_{L^3}\right),\nonumber\\
		 \|\Upsilon_4\|_{L^2}\leq&C\left(\|f^{k+1}\|_{L^\infty}\|\overline{\varphi}^{k+1}\|_{L^2}\|\xi^{k}\mbox{div} u^{k}\|_{L^\infty}+\|f^{k}\|_{L^\infty}\|\overline{\varphi}^{k}\|_{L^2}\|\xi^{k-1}\mbox{div} u^{k-1}\|_{L^\infty}\right)\nonumber\\
&+C\left(\|\nabla u^k\|_{L^\infty}\|\overline{f}^{k+1}\|_{L^2}+\|f^k\|_{L^\infty}\|\nabla\overline{u}^{k}\|_{L^2}\right).
\end{align}

{\underline{\it  Estimate on $\overline{u}^{k+1}$.}}\\

 Multiplying $\eqref{inter1}_4$ by $2\overline{u}^{k+1}$, integrating over $\Omega$ and integrating by parts yield
\begin{align}\label{section5-1}
	\frac{d}{dt}&\|\sqrt{\varphi^{k+1}}\overline{u}^{k+1}\|_{L^2}^2+a\alpha \|\mbox{curl}\overline{u}^{k+1}\|_{L^2}^2+a(2\alpha+\beta)\|\mbox{div}\overline{u}^{k+1}\|_{L^2}^2
+a\alpha\int_{\partial\Omega}K(x)\overline{u}^{k+1}\cdot \overline{u}^{k+1}
\nonumber\\
 =&\int_\Omega\varphi^{k+1}_t|\overline{u}^{k+1}|^2
 -2\int_\Omega\varphi^{k+1}(u^k\cdot\nabla\overline{u}^{k+1}+\overline{u}^{k}\cdot\nabla u^k)\cdot\overline{u}^{k+1}\nonumber\\
	&-2\int_{\Omega}\overline{\varphi}^{k+1}(u_t^k+u^{k-1}\cdot \nabla u^k)\cdot\overline{u}^{k+1}
-2\int_{\Omega}
(\varphi^{k+1}\nabla\overline{\phi}^{k+1}+\overline{\varphi}^{k+1}\nabla\phi^k)\cdot\overline{u}^{k+1}\nonumber\\
	&+2\int_\Omega\left(f^{k+1}\cdot Q(\overline{u}^{k+1})+\overline{f}^{k+1}\cdot Q(u^k)\right)\cdot\overline{u}^{k+1}
	=\sum_{i=1}^{5}J_i.
	\end{align}
The terms on the right-hand side of (\ref{section5-1}) can be estimated as follows:
\begin{align*}
	J_1
=&\int_\Omega\mbox{div}\left(u^{k}|\overline{u}^{k+1}|^2\right)\varphi^{k+1}
-(\delta-1)\int_\Omega\left((\varphi^k)^{-1}(\varphi^{k+1})^2\mbox{div}u^{k}\right)|\overline{u}^{k+1}|^2\\
	\leq &\varepsilon\|\nabla\overline{u}^{k+1}\|^2_{L^2}
+C\left(\varepsilon^{-1}\|u^k\|^2_{L^\infty}\|\varphi^{k+1}\|_{L^\infty}+(R^{k}(t))^2\right)
\|\sqrt{\varphi^{k+1}}\overline{u}^{k+1}\|^2_{L^2},\\
	J_2\leq& \varepsilon\|\nabla\overline{u}^{k+1}\|^2_{L^2}
+\epsilon\|\nabla\overline{u}^{k}\|_{L^2}^2
+C\left(\varepsilon^{-1}\|\varphi^{k+1}\|_{L^\infty}\|u^k\|^2_{L^\infty}
+\epsilon^{-1}\|\varphi^{k+1}\|_{L^\infty}\|\nabla u^k\|_{L^3}^2
\right)\|\sqrt{\varphi^{k+1}}\overline{u}^{k+1}\|_{L^2}^2,\\
	J_3\leq& \varepsilon\|\nabla\overline{u}^{k+1}\|^2_{L^2}+ C\varepsilon^{-1}(S^{k}(t))^2\|\overline{\varphi}^{k+1}\|^2_{L^2},\\
	J_4
	\leq &\varepsilon\|\nabla\overline{u}^{k+1}\|^2_{L^2}
+C\varepsilon^{-1}\|\nabla\phi^k\|^2_{L^3}\|\overline{\varphi}^{k+1}\|^2_{L^2}
+C\left(\|\varphi^{k+1}\|_{L^\infty}
\|\sqrt{\varphi^{k+1}}\overline{u}^{k+1}\|^2_{L^2}+\|\nabla\overline{\phi}^{k+1}\|^2_{L^2}\right),
	\end{align*}
and
\begin{align*}
	J_5
	=&2\int_\Omega\left(\varphi^{k+1}\psi^{k+1}\cdot Q(\overline{u}^{k+1})+\overline{f}^{k+1}\cdot Q(u^k)\right)\cdot\overline{u}^{k+1}\\
		\leq& \varepsilon\|\nabla\overline{u}^{k+1}\|^2_{L^2}+C\varepsilon^{-1}\left(\|\varphi^{k+1}\|_{L^\infty}\|\psi^{k+1}\|^2_{L^\infty}
\|\sqrt{\varphi^{k+1}}\overline{u}^{k+1}\|^2_{L^2}
+\|\nabla u^k\|^2_{L^3}
\|\overline{f}^{k+1}\|^2_{L^2}\right).
	\end{align*}
Combining the above estimates for $J_i$ ($i=1,2,\cdots, 5$) into (\ref{section5-1}),
and  choosing $\varepsilon$ small, then there exists $c_0>0$ such that
\begin{align}\label{step7-1}
		\frac{d}{dt}&\|\sqrt{\varphi^{k+1}}\overline{u}^{k+1}\|_{L^2}^2+ c_0\|\nabla\overline{u}^{k+1}\|_{L^2}^2\nonumber\\[2mm]
				\leq &E^k_4(t)\|\sqrt{\varphi^{k+1}}\overline{u}^{k+1}\|_{L^2}^2+\epsilon\|\nabla\overline{u}^{k}\|_{L^2}^2
+C\|\nabla \overline{\phi}^{k+1}\|_{L^2}^2\nonumber\\[2mm]
&+C((S^{k}(t))^2+\|\nabla\phi^k\|^2_{L^3})
\|\overline{\varphi}^{k+1}\|^2_{L^2}
+C\|\nabla u^k\|^2_{L^3}\| \overline{f}^{k+1}\|_{L^2}^2,			
		\end{align}
with
\begin{align*}
	\int_0^tE_4^{k}(s)ds\leq C+C\epsilon^{-1}t.
\end{align*}
Furthermore, multiplying $\eqref{inter1}_4$ by $2\overline{u}_t^{k+1}$,  and integrating by parts  imply that
 \begin{align}\label{section5-2}
 	&	a\frac{d}{dt}\left(\alpha \|\mbox{curl}\overline{u}^{k+1}\|_{L^2}^2+(2\alpha+\beta)\|\mbox{div}\overline{u}^{k+1}\|_{L^2}^2\right)
 +2\|\sqrt{\varphi^{k+1}}	 \overline{u}_t^{k+1}\|_{L^2}^2
 +2a\alpha\int_{\partial\Omega} K\overline{u}^{k+1}\cdot \overline{u}_t^{k+1}\nonumber\\
 		 =&-2\int_\Omega\varphi^{k+1}(u^k\cdot\nabla\overline{u}^{k+1}+\overline{u}^{k}\cdot\nabla u^k)\cdot\overline{u}_t^{k+1}
 		-2\int_{\Omega}\overline{\varphi}^{k+1}(u_t^k+u^{k-1}\cdot \nabla u^k)\cdot\overline{u}_t^{k+1}\nonumber\\
 &-2\int_{\Omega}\varphi^{k+1}\nabla\overline{\phi}^{k+1}\cdot\overline{u}_t^{k+1}
 -2\int_{\Omega}\overline{\varphi}^{k+1}\nabla\phi^k\cdot\overline{u}_t^{k+1}\nonumber\\
 		&+2\int_\Omega f^{k+1}\cdot Q(\overline{u}^{k+1})\cdot\overline{u}_t^{k+1}
 +2\int_\Omega\overline{f}^{k+1}\cdot Q(u^k) \cdot\overline{u}_t^{k+1}
 		=\sum_{i=1}^{6}K_i.
  \end{align}
 Next we estimate the terms on the right-hand side of (\ref{section5-2}).
\begin{align*}
		 K_{1}
		\leq &\frac{1}{16}\|\sqrt{\varphi^{k+1}}\overline{u}_t^{k+1}\|^2_{L^2}
+C\|\varphi^{k+1}\|_{L^\infty}\|u^k\|^2_{L^\infty}
\|\nabla\overline{u}^{k+1}\|_{L^2}^2
+C\|\varphi^{k+1}\|_{L^\infty}
\|\nabla u^k\|^2_{L^3}\|\nabla\overline{u}^{k}\|^2_{L^2},\\
	K_{2}
			=&-2\frac{d}{dt}\int_{\Omega}\overline{\varphi}^{k+1}(u_t^k+u^{k-1}\cdot \nabla u^k)\cdot \overline{u}^{k+1}
			+2\int_{\Omega}\left(\overline{\varphi}^{k+1}(u_t^k+u^{k-1}\cdot \nabla u^k)\right)_t\cdot \overline{u}^{k+1}\\
		\leq &-2\frac{d}{dt}\int_{\Omega}\overline{\varphi}^{k+1}(u_t^k+u^{k-1}\cdot \nabla u^k)\cdot \overline{u}^{k+1}
+E^k_{5}(t)\|\nabla\overline{u}^{k+1}\|^2_{L^2}+ \|\overline\varphi^{k+1}\|_1^2+\|\nabla\overline{u}^{k}\|^2_{L^2}+\epsilon\|\overline\varphi^{k}\|_{L^2}^2,
	\end{align*}
with
\begin{align*}
	\int_0^tE_5^{k}(s)ds\leq C+C\epsilon^{-1}t.
\end{align*}
For terms $K_3$ and $K_4$,
\begin{align*}
		K_3\leq& \frac{1}{16}\|\sqrt{\varphi^{k+1}}\overline{u}_t^{k+1}\|^2_{L^2}
+C\|{\varphi}^{k+1}\|_{L^\infty}\|\nabla\overline\phi^{k+1}\|_{L^2}^2,\\
	K_4=&-2\frac{d}{dt}\int_{\Omega}\overline{\varphi}^{k+1}\nabla\phi^k\cdot\overline{u}^{k+1}
+2\int_{\Omega}\overline{\varphi}_t^{k+1}\nabla\phi^k\cdot\overline{u}^{k+1}
		+\int_{\Omega}\overline{\varphi}^{k+1}\nabla\phi_t^k\cdot\overline{u}^{k+1}\\
			 \leq&-2\frac{d}{dt}\int_{\Omega}\overline{\varphi}^{k+1}\nabla\phi^k\cdot\overline{u}^{k+1}
+E^k_6(t)\|\nabla\overline{u}^{k+1}\|^2_{L^2}+\|\overline\varphi^{k+1}\|_1^2
			+\|\nabla\overline{u}^{k}\|_{L^2}^2+\epsilon\|\overline\varphi^k\|_{L^2}^2,
	\end{align*}
with
\begin{align*}
	\int_0^tE_6^{k}(s)ds\leq C+C\epsilon^{-1}t.
\end{align*}
For terms $K_5$ and $K_6$,
\begin{align*}
		K_5\leq& \frac{1}{16}\|\sqrt{\varphi^{k+1}}\overline{u}_t^{k+1}\|^2_{L^2}
+C\|{\varphi}^{k+1}\|_{L^\infty}\|\psi^{k+1}\|_{L^\infty}^2\|\nabla\overline u^{k+1}\|_{L^2}^2,\\
		K_6
		 =&2\frac{d}{dt}\int_{\Omega}\overline{f}^{k+1}\cdot Q(u^k)\cdot\overline{u}^{k+1}
-2\int_{\Omega}\overline{f}^{k+1}\cdot Q(u^k_t)\cdot\overline{u}^{k+1}
-2\int_{\Omega}\overline{f}_t^{k+1}\cdot Q(u^k)\cdot\overline{u}^{k+1}
\\
\leq&2\frac{d}{dt}\int_{\Omega}\overline{f}^{k+1}\cdot Q(u^k)\cdot\overline{u}^{k+1}+E_7^k(t)\|\nabla \overline{u}^{k+1}\|^2_{L^2}+C\| \overline{f}^{k+1}\|^2_{L^2}
+C\| \overline{\varphi}^{k+1}\|^2_{1}+C\|\nabla \overline{u}^{k}\|^2_{L^2}\\
&+\epsilon\left(\|\nabla^2 \overline{u}^{k}\|^2_{L^2}+\| \overline{f}^{k}\|^2_{L^2}
+\|\overline{\varphi}^{k}\|^2_{1}\right),
	\end{align*}
with
\begin{align*}
	\int_0^tE_7^{k}(s)ds\leq C+C\epsilon^{-1}t,
\end{align*}
where  we have used the expression of $\overline{f}^{k+1}_t$ as in  $(\ref{inter1})_2$ and the estimates
(\ref{r1r2r3}).
Putting the above estimates into  (\ref{section5-2}),
we obtain that
\begin{align}\label{step7}
		&	a\frac{d}{dt}\left(\alpha \|\mbox{curl}\overline{u}^{k+1}\|_{L^2}^2+(2\alpha+\beta)\|\mbox{div}\overline{u}^{k+1}\|_{L^2}^2\right)
+\|\sqrt{\varphi^{k+1}}	 \overline{u}_t^{k+1}\|_{L^2}^2
 +\frac{d}{dt}\int_{\partial\Omega} K\overline{u}^{k+1}\cdot \overline{u}^{k+1}\nonumber\\
		\leq &\frac{d}{dt}\mathcal{D}(t)
+ (E_{6}^{k}(t)+E_{7}^{k}(t))\|\nabla \overline{u}^{k+1}\|_{L^2}^2+
C\| \overline{\varphi}^{k+1}\|_{1}^2+C\|\nabla\overline\phi^{k+1}\|_{L^2}^2
\nonumber\\
		&+C\| \overline{f}^{k+1}\|_{L^2}^2
		 +C\|\nabla\overline{u}^{k}\|^2_{L^2}
+\epsilon\left(\|\nabla^2 \overline{u}^{k}\|^2_{L^2}+\|\overline{\varphi}^k\|_{1}^2+\|\overline{f}^k\|_{L^2}^2\right),		 \end{align}
where
\begin{align*}
	\mathcal{D}(t)=2\int_{\Omega}\left(-\overline{\varphi}^{k+1}(u^k_t+u^{k-1}\cdot\nabla u^k+\nabla\phi^k)
+\overline{f}^{k+1}\cdot Q(u^k)\right)\cdot \overline{u}^{k+1}.
\end{align*}
Then it follows from inequalities \eqref{step1}, \eqref{step2}, \eqref{step3}, \eqref{step7-1} and \eqref{step7} that
\begin{align}\label{fin}
	&\frac{d}{dt}\left(
\|\overline{\phi}^{k+1}\|^2_1+\|\overline{\varphi}^{k+1}\|_1^2
	 +\|\overline{f}^{k+1}\|_{L^2}^2+\|\sqrt{\varphi^{k+1}}\overline{u}^{k+1}\|_{L^2}^2\right)\nonumber\\
	&+ a\epsilon_1\frac{d}{dt}\left(\alpha \|\mbox{curl}\overline{u}^{k+1}\|_{L^2}^2
+(2\alpha+\beta)\|\mbox{div}\overline{u}^{k+1}\|_{L^2}^2\right)
+\epsilon_1\|\sqrt{\varphi^{k+1}}\overline u^{k+1}_t\|_{L^2}^2+
c_0\|\nabla \overline{u}^{k+1}\|_{L^2}^2\nonumber\\
	\leq& \epsilon_1\frac{d}{dt}\mathcal{D}(t)+
C\epsilon(1+\epsilon_1)(\|\nabla^2\overline{u}^{k}\|_{L^2}^2+\|\overline{\varphi}^k\|_1^2
+\|\overline{f}^k\|^2_{L^2})+C(\epsilon+\epsilon_1)\|\nabla\overline{u}^{k}\|_{L^2}^2\nonumber\\
&+(E_{\epsilon}^k(t)+\epsilon_1)\left(\|\nabla\overline{u}^{k+1}\|^2_{L^2}
+\|\overline{\phi}^{k+1}\|^2_1\right)\nonumber\\
&+(E_{\epsilon}^k(t)+\epsilon_1)\left(\|\overline{\varphi}^{k+1}\|_1^2
	+\|\overline{f}^{k+1}\|_{L^2}^2+\|\sqrt{\varphi^{k+1}}\overline{u}^{k+1}\|_{L^2}^2\right),
\end{align}
where $\epsilon_1>0$ is a sufficiently small constant, and  $E_{\epsilon}^k(t)$ satisfies
\begin{align}\label{Ek}
	\int_0^tE_{\epsilon}^k(s)ds\leq C+C\epsilon^{-1}t,\quad \mbox{for}\quad 0\leq t\leq T^{**}.
\end{align}
According to equations $(\ref{inter1})_4$ and the elliptic estimate as in Lemma \ref{lem-elliptic-1}, we have
\begin{align}\label{D^2u}
\|\nabla^2\overline{u}^{k}\|_{L^2}^2	\leq&C\left( \|\sqrt{\varphi^{k}}\overline u^{k}_t\|_{L^2}^2 + \|\nabla \overline{u}^{k-1}\|_{L^2}^2+\|\nabla\overline\phi^{k}\|_{L^2}^2+\|\overline \varphi^{k}\|_1^2\right)\nonumber\\
&+
C\left(\|\overline f^{k}\|_{L^2}^2+\|\nabla \overline{u}^{k}\|_{L^2}^2\right).
\end{align}
Using the Cauchy's inequality, it holds
\begin{align}\label{Dt}
	\int_0^t\frac{d}{dt}\mathcal{D}(s)ds	\leq&C( \|\overline u^{k+1}\|_{L^2}^2 +
\|\overline{\varphi}^{k+1}\|_{L^2}^2+ \| \overline{f}^{k+1}\|_{L^2}^2).
\end{align}
Finally, we denote
\begin{align*}
	\Gamma ^{k+1}(t,\epsilon_1)=& \sup_{0\leq s\leq t}\left(\|\overline{\phi}^{k+1}\|^2_1+\|\overline{\varphi}^{k+1}\|_1^2
	+\|\overline{f}^{k+1}\|_{L^2}^2+\|\sqrt{\varphi^{k+1}}\overline{u}^{k+1}\|_{L^2}^2\right)\\
	&+ a\epsilon_1\sup_{0\leq s\leq t}\left(\alpha \|\mbox{curl}\overline{u}^{k+1}\|_{L^2}^2
+(2\alpha+\beta)\|\mbox{div}\overline{u}^{k+1}\|_{L^2}^2\right).
	\end{align*}
Then it follows from (\ref{fin})-(\ref{Dt})  that
\begin{align*}
	&\Gamma^{k+1}(t,\epsilon_1)+\int_0^t \left(c_0\|\nabla \overline{u}^{k+1}\|_{L^2}^2
	+\epsilon_1\|\sqrt{\varphi^{k+1}}\overline u^{k+1}_t\|_{L^2}^2\right) ds\nonumber\\
	&\leq C\Big\{\int_0^t
	\epsilon(1+\epsilon_1)\left(\|\nabla \overline{u}^{k-1}\|_{L^2}^2+	 \|\nabla\overline{u}^{k} \|_{L^2}^2+\|\sqrt{\varphi^{k}}\overline u^{k}_t\|_{L^2}^2\right)ds+ (\epsilon+\epsilon_1) t\Gamma^{k}(t)
\Big\}
\cdot\exp{(C+C\epsilon^{-1}t)}.
\end{align*} 	
Now we choose $\epsilon\in(0,1)$ and $\epsilon_1\in(0,1)$ small enough, and  pick up  $T_{*}\in(0,\min(1,T^{**}))$ small enough such that
\begin{align*}
	&C(\epsilon+\epsilon_1)\exp C\leq \frac{1}{32},\quad
C\epsilon(1+\epsilon_1)\exp C\leq \frac{\epsilon_1}{32},\\
&C\epsilon(1+\epsilon_1)\exp C\leq \frac{c_0}{32},\quad
(1+T_{*})\exp(C\epsilon T_{*})\leq 4.
\end{align*}
Then it is easy to see that
\begin{align}\label{full}
	&\sum _{k=1}^{\infty}\sup_{0\leq t\leq T_*}\left(\|\overline{\phi}^{k+1}\|^2_1+\|\overline{\varphi}^{k+1}\|_1^2
	+\|\sqrt{\varphi^{k+1}}\overline{u}^{k+1}\|_{L^2}^2
	+  \|\nabla \overline{u}^{k+1}\|_{L^2}^2+
	\|\overline{f}^{k+1}\|_{L^2}^2\right)\nonumber\\
	&+\sum _{k=1}^{\infty}\int_0^{T_*} \left(\|\nabla \overline{u}^{k+1}\|_{L^2}^2
	+\|\sqrt{\varphi^{k+1}}\overline u^{k+1}_t\|_{L^2}^2\right) ds\leq C.
\end{align}
Thanks to (\ref{full}) and the local estimates (\ref{local-linear}) independent of $k$, we have
\begin{align}
	\lim_{k\rightarrow \infty}\| \overline{f}^{k+1}\|_{L^6}=0,\quad
	\lim_{k\rightarrow \infty}\| \overline{\varphi}^{k+1}\|_{L^\infty}=0.\nonumber
\end{align}
Thus, the whole sequence $(\phi^{k},\varphi^k,f^k,u^{k})$ converges
to a limit $(\phi,\varphi,  f,u)$ in the following strong sense: for any $s'\in[1,3)$,
\begin{align}\label{solu}
	&\phi^{k}-\phi^\infty\rightarrow \phi -\phi^\infty\quad \mbox{in}\quad L^{\infty}([0,T_*];H^{s'}(\Omega)),\nonumber\\
	&f^{k}\rightarrow f \quad \mbox{in}\quad L^{\infty}([0,T_*]; L^6(\Omega)),\nonumber\\
	&\varphi^k\rightarrow \varphi \quad \mbox{in}\quad L^{\infty}([0,T_*];L^\infty(\Omega)),\nonumber\\
	&u^{k}\rightarrow u \quad \mbox{in}\quad  L^{\infty}([0,T_*]; D^1\cap D^{s'}(\Omega)).
\end{align}
Furthermore, due to the a priori estimate \eqref{local-linear} independent of $k$,  there exists a subsequence (still denoted by $(\phi^k, \varphi^k, f^k,  u^k )$) converging to the limit $(\phi, \varphi, f,  u )$ in the weak or weak$^{*}$ sense. According to the lower semi-continuity of norms, the corresponding estimates in \eqref{local-linear} for $(\phi, \varphi, f,  u )$ still hold except those weighted estimates on $u$.
Thus, $(\phi, \varphi, f, u )$ is a weak solution in the sense of distributions to the  following initial-boundary value problem:
\begin{equation}\label{non-limit1}
	\left\{
	\begin{array}{llll}
		\phi_t+u\cdot\nabla\phi+(\gamma-1)\phi\mbox{div}u=0,\\
		\varphi(u_{t}+u\cdot\nabla u+\nabla\phi) +aLu=f\cdot Q(u),\\
		f_t+\sum_{l=1}^3A_l(u)\partial_lf+B(u)f
+a\delta\nabla\mbox{div}u=0,\\
		\varphi_t+u\cdot\nabla\varphi-(\delta-1)\varphi \mbox{div}u=0,\\
		(\phi,\varphi,f,u)|_{t=0}
=\left(\phi_0,\phi_0^{-2\kappa}, \phi_0^{-2\kappa}\psi_0, u_0\right),\\
		(\phi,\varphi,f,u)\rightarrow(\phi^{\infty},(\phi^{\infty})^{-2\kappa},0,0),\quad \mbox{as} \quad |x|\rightarrow \infty,\quad t\geq0,\\
		u\cdot n=0,\quad \mbox{curl}u\times n= -K(x)u,\quad \mbox{on}\quad \partial\Omega.
		\end{array}
	\right.
\end{equation}
\vspace{1mm}

{\it{Step 1.2:} Strong convergence of $\psi^{k}$ and the existence to the problem \eqref{non-1}.}

Note that the conclusions
obtained in Step 1.1 above  means that the  existence of the strong solution $(\phi, \varphi, f, u )$ to initial-boundary value problem \eqref{non-limit1}. Next, we may show the  existence of the strong solution $(\phi,\psi, u)$ to initial-boundary value problem \eqref{non-1}.

We first need to check the strong convergence of $\psi^k$:
\begin{align*}
 \|\psi^{k+1}-\psi^{k}\|_{L^6}
 \leq C_{\sigma}\left(\|\varphi^k\|_{L^\infty}\|\overline{f}^{k+1}\|_{L^6}
+\|f^k\|_{L^6}\|\overline{\varphi}^{k+1}\|_{L^\infty}\right)
	\end{align*}
which along with  (\ref{solu}) yields that
\begin{align}\label{psi-k}
	\psi^{k}\rightarrow \psi \quad \mbox{in}\quad L^{\infty}([0,T_*]; L^6(\Omega)).
	\end{align}
Next, we need to show the relation $f=\varphi\psi$ still holds for the limit functions. Note that
\begin{align*}
\|f^{k}-\psi\varphi\|_{L^6}\leq C\left(\|\varphi^k-\varphi\|_{L^\infty}\|\psi^k\|_{L^6}+\|\psi^k-\psi\|_{L^6}\|\varphi\|_{L^\infty}\right),
\end{align*}
which implies that
\begin{align}\label{f-k}
	f(t,x)= \psi\varphi(t,x) \quad  \mbox{a.e. on }\ [0,T_{*}]\times\Omega.
\end{align}
In order to check the following relations hold out
\begin{equation}\label{relation-lim}
		f=\frac{2a\delta\kappa}{\delta-1}\frac{\nabla\phi}{\phi},\quad \varphi=\frac{1}{\phi^{2\kappa}},\quad\psi=\frac{a\delta}{\delta-1}\nabla\phi^{2\kappa}.
\end{equation}
Denote
\begin{equation*}
	f^*=f-\frac{2a\delta\kappa}{\delta-1}\frac{\nabla\phi}{\phi},\quad \varphi^*=\varphi-\phi^{-2\kappa}.
\end{equation*}
Then it follows from the equations $(\ref{non-limit1})_1$ and $(\ref{non-limit1})_3-(\ref{non-limit1})_4$ that
\begin{equation*}
	\left\{
	\begin{array}{lll}
		f^*_t+\sum_{l=1}^3A_l(u)\partial_l f^{*}+B(u)f^{*}=0,\\
		\varphi^*_t+u\cdot\nabla\varphi^*-(\delta-1)\varphi^* \mbox{div}u=0,\\
		(f^*, \varphi^*)|_{t=0}=\left(0,0\right),\\[2mm]
		(f^*,\varphi^*)\rightarrow(0,0),\quad \mbox{as} \quad |x|\rightarrow \infty,\quad t\geq0.
	\end{array}
	\right.
\end{equation*}
By a standard energy method, we show that
\begin{equation*}
f^*=0,\quad \varphi^*=0 \quad\mbox{for}\quad  (t,x)\in[0,T_*]\times\Omega.
\end{equation*}
Therefore the first two relations of \eqref{relation-lim} have been confirmed. At the same time, the relation $\psi=\frac{a\delta}{\delta-1}\nabla\phi^{2\kappa}$ can be verified by $f=\psi\varphi$.

Next, we should show the weak convergence of the $\xi^{k}$-weighted sequences.  Let $\xi=\varphi^{-1}$.
In view of the uniform positivity for $\varphi^{k}\geq C(\sigma)>0$,  $\varphi\geq C(\sigma)>0$ and (\ref{solu}), we have
 \begin{align*}
 	&\int^{T_*}_0\int_\Omega\left(\xi^k\nabla u^{k}-\varphi^{-1}\nabla u\right)w
 	=\int^{T_*}_0\int_\Omega\left(\left(\frac{\varphi-\varphi^k}{\varphi^k\varphi}\right)\nabla u^{k}
 +\varphi^{-1}(\nabla u^k-\nabla u)\right)w\nonumber\\
 	&\leq C(\sigma)\left(\|\varphi^k-\varphi\|_{L^\infty([0,T_*];L^{\infty}(\Omega))}
 +\|\nabla u^k-\nabla u\|_{L^\infty([0,T_*];L^2(\Omega))}\right)T_*\rightarrow0, \quad\text{as}\ k\rightarrow+\infty,
 	\end{align*}
 for any test functions $w(x,t)\in C_c^\infty(\Omega\times [0,T_*])$, which implies that
 \begin{align}\label{weightedu1}
 	\xi^k\nabla u^k\rightharpoonup \varphi^{-1}\nabla u \quad \mbox{weakly$^*$ in}\quad L^{\infty}([0,T_*];  H^2(\Omega)).
 \end{align}
Similarly, we also have
\begin{align}\label{weightedu2}
	&\xi^k\nabla u_t^k\rightharpoonup{\varphi^{-1}}\nabla u_t\quad \mbox{weakly$^*$ in}\quad L^{\infty}([0,T_*]; L^2(\Omega)),\nonumber\\
	&\xi^k\nabla^2u^k\rightharpoonup \varphi^{-1}\nabla^2u \quad \mbox{weakly$^*$ in}\quad L^{\infty}([0,T_*]; H^1(\Omega)),\nonumber\\
	&	\xi^k\nabla^2u^k\rightharpoonup \varphi^{-1}\nabla^2u \quad \mbox{weakly in}\quad L^{2}([0,T_*]; D^1\cap D^2(\Omega)),\nonumber\\
		&(\xi^k\nabla^2u^k)_t\rightharpoonup( \varphi^{-1}\nabla^2u)_t \quad \mbox{weakly in}\quad L^{2}([0,T_*]; L^2(\Omega)).
	\end{align}
Therefore the weighted estimates for $u$ shown in the a priori estimates \eqref{local-linear} also hold for the limit functions. Under the help of  a priori estimates \eqref{local-linear}, the above  convergences  \eqref{solu}, \eqref{psi-k}, \eqref{weightedu1}-\eqref{weightedu2}  and relations (\ref{f-k}),(\ref{relation-lim}), it is clear that the functions
\begin{align*}
(\phi,u,\xi=\phi^{2\kappa},\psi=\frac{a\delta}{\delta-1}\nabla\phi^{2\kappa})
\end{align*}
meet the initial-boundary value problem \eqref{non-1} in the sense of distributions. Moreover $(\phi, \varphi, f, u )$ satisfy the a priori estimates \eqref{local-linear}.

 {\it\textbf{Step 2:} Uniqueness.} The uniqueness
can be obtained by standard procedure, we omit it.
\end{proof}

\subsection{Taking limit from the non-vacuum flows to the flow with far field vacuum}\label{sub3.5}
Based on the  estimates in (\ref{local-linear}), now we are ready to prove Theorem \ref{thm2}.

{\bf{Proof of Theorem \ref{thm2}.}}
	{\it{Step 1: The locally uniform positivity of $\phi$.} }

For any $\sigma\in(0,1)$, define
	\begin{align*}
		\phi_0^\sigma=\phi_0+\sigma,\quad \psi_0^\sigma=\frac{a\delta}{\delta-1}\nabla(\phi_0+\sigma)^{2\kappa},\quad \xi_0^\sigma=(\phi_0+\sigma)^{2\kappa}.
		\end{align*}
		Then the initial compatibility conditions can be given as
	\begin{equation*}
		\left\{
		\begin{array}{ll}
		\nabla u_0=(\phi_0+\sigma)^{-2\kappa}g_1^\sigma,\quad Lu_0=(\phi_0+\sigma)^{-2\kappa}g_2^\sigma,\\
			 \nabla\left(a(\phi_0+\sigma)^{2\kappa}Lu_0\right)=(\phi_0+\sigma)^{-2\kappa}g_3^\sigma,
		\end{array}
		\right.
	\end{equation*}	
which implies that
		\begin{align*}
			&g_1^\sigma=\frac{\phi_0^{-2\kappa}}{(\phi_0+\sigma)^{-2\kappa}}g_1,\quad g_2^\sigma=\frac{\phi_0^{-2\kappa}}{(\phi_0+\sigma)^{-2\kappa}}g_2,\\
			&g_3^\sigma=\frac{\phi_0^{-4\kappa}}{(\phi_0+\sigma)^{-4\kappa}}
\left(g_3-\frac{a\sigma\nabla\phi_0^{2\kappa}}{\phi_0+\sigma}\phi_0^{2\kappa}Lu_0\right)
			=\frac{\phi_0^{-4\kappa}}{(\phi_0+\sigma)^{-4\kappa}}
\left(g_3-\frac{a\sigma\nabla\xi_0}{\phi_0+\sigma}g_2\right).
	\end{align*}
Then according to the initial assumption (\ref{intial2}) and (\ref{intial3}),  there exists a $\sigma_1$ such that if $0<\sigma<\sigma_1$, then
\begin{align}\label{know-2}
		2&+\sigma+\|\phi^\sigma_0-\sigma\|_3+\|\psi_0^\sigma\|_{D^1\cap D^2}+\|u_0\|_3+\|g^\sigma_1\|_{L^2}+\|g^\sigma_2\|_{L^2}+\|g^\sigma_3\|_{L^2}\nonumber\\
		&+\|(\xi^\sigma_0)^{-1} \|_{L^\infty\cap D^{1,6}\cap D^{2,3}\cap D^3}+\|\nabla\xi^\sigma_0/\xi^\sigma_0\|_{L^\infty\cap D^{1,3}\cap D^2}\leq \bar{c}_0,
	\end{align}
where $ \bar{c}_0$ is a positive constant independent  of $\sigma$. Therefore, if we take $(\phi^{\sigma}_0,\psi_0^\sigma, u_0)$ as the initial data, then the initial boundary problem (\ref{non-1}) exists a unique strong solution $(\phi^{\sigma}, \psi^\sigma, u^{\sigma})$, which satisfies the local estimates in (\ref{c_i})-(\ref{local-linear}) with $c_0$ replaced by $\bar{c}_0$, and the life span $T_{*}$  is independent of $\sigma$.

Moreover, we have the following result, which is analogous to Lemma 3.9 in \cite{XZ2021jmpa}.
\begin{lem}\label{lem-3.11}
For any $R_0 > 0$ and $\sigma \in (0, 1]$, there exists a constant $a_{R_0}$ such that
\begin{equation} \label{lower-bound}
    \phi^\sigma(t, x) \geq a_{R_0} > 0, \quad \forall (t, x) \in [0, T^*] \times B_{R_0},
\end{equation}
where $a_{R}$ is independent of $\sigma$.
\end{lem}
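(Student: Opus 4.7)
The plan is a characteristics argument for the continuity equation, driven entirely by the $\sigma$-independent velocity bounds already established in (\ref{local-linear}). Since $\phi^\sigma$ solves the transport equation $(\ref{non-1})_1$, I would integrate along the flow $W^\sigma(x_0,t)$ defined by $\partial_t W^\sigma=u^\sigma(W^\sigma,t)$, $W^\sigma(x_0,0)=x_0$, producing the explicit representation
\begin{equation*}
\phi^\sigma(W^\sigma(x_0,t),t)=\phi_0^\sigma(x_0)\exp\Big(-(\gamma-1)\int_0^t\mbox{div}\,u^\sigma(W^\sigma(x_0,s),s)\,ds\Big).
\end{equation*}
This reduces the lemma to two ingredients that must both be uniform in $\sigma$: a lower bound for the exponential factor, and a uniform positive lower bound for $\phi_0^\sigma$ over the preimage set $(W^\sigma)^{-1}(B_{R_0}\cap\Omega,t)$.

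Next I would invoke (\ref{local-linear}) to control $u^\sigma$ uniformly: $\sup_{[0,T^*]}\|u^\sigma\|_{H^3}\leq C$ independently of $\sigma$, so the Sobolev embedding $H^3(\Omega)\hookrightarrow W^{1,\infty}(\Omega)$ in three dimensions provides a constant $M$, independent of $\sigma$, with
\begin{equation*}
\|u^\sigma\|_{L^\infty([0,T^*]\times\Omega)}+\|\mbox{div}\,u^\sigma\|_{L^\infty([0,T^*]\times\Omega)}\leq M.
\end{equation*}
This immediately bounds the exponential factor below by $e^{-(\gamma-1)MT^*}$, and gives $|W^\sigma(x_0,t)-x_0|\leq MT^*$. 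Because $u^\sigma\cdot n=0$ on $\partial\Omega$ by the Navier-slip condition, the flow is a diffeomorphism of $\bar\Omega$ onto itself, so the preimage of any $x\in B_{R_0}\cap\Omega$ lies in the fixed compact set $K:=\bar B_{R_0+MT^*}\cap\bar\Omega$ for every $t\in[0,T^*]$ and every $\sigma\in(0,1]$.

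Finally, the hypothesis $\phi_0\in H^3(\Omega)$, which embeds into $C(\bar\Omega)$, combined with the strict positivity $\phi_0>0$ pointwise, yields $m_{R_0}:=\inf_{K}\phi_0>0$, a constant independent of $\sigma$. Using $\phi_0^\sigma=\phi_0+\sigma\geq\phi_0$, we conclude
\begin{equation*}
\phi^\sigma(x,t)\geq m_{R_0}\,e^{-(\gamma-1)MT^*}=:a_{R_0}>0,\qquad (t,x)\in[0,T^*]\times(B_{R_0}\cap\Omega),
\end{equation*}
which is (\ref{lower-bound}). I do not anticipate a genuine obstacle here: the argument collapses to one display line once one has the $\sigma$-independent $L^\infty_{t,x}$ bound on $u^\sigma$ and $\mbox{div}\,u^\sigma$, and that bound is exactly what (\ref{local-linear}) was designed to supply. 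The only subtlety worth noting is that the constant $a_{R_0}$ depends on $R_0$ (through $m_{R_0}$) and on the life span $T^*$ (through the exponential factor and the enlarged ball), but not on $\sigma$, as required.
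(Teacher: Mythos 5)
Your proposal is correct and is exactly the standard characteristics argument that the paper relies on: the paper itself omits the proof, simply citing the analogous Lemma 3.9 of Xin--Zhu \cite{XZ2021jmpa}, which proceeds in the same way (representation of $\phi^\sigma$ along particle paths, a $\sigma$-independent $L^\infty$ bound on $u^\sigma$ and $\mbox{div}\,u^\sigma$ from the uniform estimates (\ref{local-linear}) via $H^3\hookrightarrow W^{1,\infty}$, and a positive lower bound of $\phi_0$ on the enlarged compact set). Your only additional ingredient, that trajectories remain in $\bar\Omega$ because $u^\sigma\cdot n=0$ on $\partial\Omega$ so preimages of points of $B_{R_0}\cap\Omega$ lie in $\bar B_{R_0+MT^*}\cap\bar\Omega$, is the right way to adapt the Cauchy-problem argument to the exterior domain.
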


{\it Step 2: Existence.}

Because the estimates (\ref{local-linear}) are independent of $\sigma$, one can find a subsequence of solutions (still denoted by) $(\phi^{\sigma},\psi^{\sigma},u^{\sigma})$ that converges to $(\phi,\psi,u)$ in the weak$^*$ sense as $\sigma\rightarrow0$, that is:
\begin{equation}\label{weak-1}
	\begin{split}
		&(\phi^\sigma-\sigma, u^{\sigma})\rightharpoonup(\phi,u)\quad \mbox{weakly}^{*}\quad \mbox{in}\ L^\infty([0,T^*];H^3);\\
		&\psi^\sigma\rightharpoonup\psi\quad \mbox{weakly}^{*}\quad \mbox{in}\ L^\infty([0,T^*];D^1\cap D^2);\\
		&\phi_t^\sigma\rightharpoonup\phi_t\quad \mbox{weakly}^{*}\quad \mbox{in}\ L^\infty([0,T^*];H^2);\\
		&(\psi_t^{\sigma},u_t^{\sigma})\rightharpoonup(\psi_t,u_t)\quad \mbox{weakly}^{*}\quad \mbox{in}\ L^\infty([0,T^*];H^1).
	\end{split}
\end{equation}
In addition, for any constant $R>0$, employing the uniform estimates (\ref{local-linear}) and Aubin-Lions Lemma (refer to Lemma \ref{lem-AL}), there  exists  a subsequence of solutions (still denoted by) $(\phi^{\sigma},\psi^{\sigma}, u^{\sigma})$ satisfying
\begin{equation}\label{strong}
	\lim_{\sigma\rightarrow0}(\phi^\sigma,\psi^{\sigma},u^{\sigma})=(\phi,\psi, u)\in C\left([0,T^*];H^1(B_R\cap\Omega)\right)
\end{equation}
where $B_R=\{x\in\mathbb{R}^3|\,|x|\leq R\}$.
Then through the uniform estimates (\ref{local-linear}), the weak or weak$^*$ convergence in (\ref{weak-1}) along with the strong convergence in (\ref{strong})  expected those weighted estimates on $u$.
At the same time, the following relations satisfy the limit functions:
\begin{align*}
		f=\frac{2a\delta\kappa}{\delta-1}\frac{\nabla\phi}{\phi},\quad \varphi=\frac{1}{\phi^{2\kappa}},\quad\psi=\frac{a\delta}{\delta-1}\nabla\phi^{2\kappa},
	\end{align*}
which could be proved by the similar argument as the proof of  \eqref{relation-lim}.

Set $\xi=\varphi^{-1}$, employing  Lemma \ref{lem-3.11}  and the weak$^*$ convergences  given in \eqref{weak-1} and strong convergences given in \eqref{strong}, we can obtain
\begin{align*}
	&\sqrt{\xi^\sigma}u^\sigma_t\rightharpoonup\varphi^{-\frac{1}{2}} u_t\quad \mbox{weakly}^{*}\quad \mbox{in}\ L^{\infty}([0,T_*]; L^2(\Omega)),\nonumber\\
	&\xi^\sigma\nabla u^\sigma\rightharpoonup \varphi^{-1}\nabla u \quad \mbox{weakly}^{*}\quad \mbox{in}\
 L^{\infty}([0,T_*]; H^2(\Omega)),\nonumber\\
	&\xi^\sigma\nabla^2u^\sigma\rightharpoonup \varphi^{-1}\nabla^2u \quad \mbox{weakly}\quad \mbox{in}\  L^{2}([0,T_*]; D^1\cap D^2(\Omega)),\nonumber\\
	&(\xi^\sigma\nabla^2u^\sigma)_t\rightharpoonup( \varphi^{-1}\nabla^2u)_t \quad \mbox{weakly}\quad \mbox{in} \ L^{2}([0,T_*]; L^2(\Omega)),\nonumber
\end{align*}
by the similar approach for proving \eqref{weightedu1}. Therefore, the uniform weighted estimates for $u$ established in (\ref{local-linear}) remain valid for the limiting functions.

	{\it{Step 3: The uniqueness and time continuity.}}  The uniqueness and time continuity for $(\phi,\psi,u)$ can be obtained by standard procedure,  we omit the details here.
\qed

\subsection{The proof for Theorem \ref{thm1}}\label{sub3.6}

With Theorem \ref{thm2} at hand, we are ready to establish the local-in-time well-posedness of the regular solution to the original the initial-boundary value problem (\ref{prob1})-(\ref{1.8})  shown in Theorem \ref{thm1}.

{\bf{Proof of Theorem \ref{thm1}.}}
It follows from the initial assumptions (\ref{initial data}), (\ref{comp}) and Theorem \ref{thm2} that there exists a time $T_{*}>0$ such that  the  enlarged  initial-boundary value problem (\ref{ibvp-sect3})  has a unique regular solution  $(\phi ,\psi, u)$ satisfying the regularity (\ref{thm2-reg}), which implies that
$\phi\in C^{1}([0,T_{*}]\times \Omega)$.
Set $\rho=\left(\frac{\gamma-1}{A\gamma}\phi\right)^{\frac{1}{\gamma-1}}$ with $\rho(0,x)=\rho_0$. It follows from the proof of Theorem \ref{thm2} that the following relations hold:
\begin{align*}
f=\frac{2a\delta\kappa}{\delta-1}\frac{\nabla\phi}{\phi},\quad \varphi=\frac{1}{\phi^{2\kappa}},\quad\psi=\frac{a\delta}{\delta-1}\nabla\phi^{2\kappa},
\end{align*}
which implies that
\begin{align*}
f=a\delta\nabla \log \rho,\quad \varphi=a\rho^{1-\delta}\quad \mbox{and}\quad \psi=\frac{\delta}{\delta-1}\nabla \rho^{\delta-1}.
\end{align*}
Due to the above regularities and relations of  $(\phi, \psi, u)$, then multiplying $(\ref{ibvp-sect3})_1$ by
\begin{align*}
\frac{\partial\rho}{\partial\phi}(t,x)=\frac{1}{\gamma-1}\left(\frac{\gamma-1}{A\gamma}\right)^{\frac{1}{\gamma-1}}
\phi^{\frac{2-\gamma}{\gamma-1}}(t,x)
\end{align*}
yields the continuity equation in $(\ref{prob1})_1$, while multiplying $(\ref{ibvp-sect3})_2$  by $\rho(t,x)$ gives the momentum equations in $(\ref{prob1})_2$.
Thus we have shown that $(\rho,u )$ satisfies problem (\ref{prob1})-(\ref{1.8})
 in the sense of distributions and has the regularities shown in Definition \ref{def1} and (\ref{thm1-reg}).
In summary, the  problem (\ref{prob1})-(\ref{1.8}) has a unique regular solution.
Moreover, if $1<\gamma\leq2$, one has $\rho\in C^{1}([0,T_{*}]\times \Omega)$ thanks to $\phi\in C^{1}([0,T_{*}]\times \Omega)$.  So, $(\rho,u)$ is a strong solution to (\ref{prob1})-(\ref{1.8}).
\qed
\\

\centerline{\bf Acknowledgements}
Liu is supported by National Natural Science Foundation of China (No.12471198, No.
12431018).  Zhong is supported by National Natural Science Foundation of China (No. 12201520).

\end{document}